\DeclareMathOperator{\Res}{Res}
\theoremstyle{plain}
\newtheorem{theorem}{Theorem}[section]
\newtheorem*{theorem*}{Theorem}
\newtheorem{proposition}[theorem]{Proposition}
\newtheorem{corollary}[theorem]{Corollary}
\newtheorem{lemma}[theorem]{Lemma}
\theoremstyle{definition}
\newtheorem{definition}[theorem]{Definition}
\newtheorem{notation}[theorem]{Notation}
\newtheorem{remark}[theorem]{Remark}
\newcommand{\enm}[1]{\ensuremath{#1}}          %
\newcommand{\cal}[1]{\mathcal{#1}}
\renewcommand{\bar}[1]{\overline{#1}}
\newcommand{\NN}{\enm{\mathbb{N}}}
\newcommand{\RR}{\enm{\mathbb{R}}}
\newcommand{\ZZ}{\enm{\mathbb{Z}}}
\newcommand{\FF}{\enm{\mathbb{F}}}
\newcommand{\PP}{\enm{\mathbb{P}}}
\newcommand{\Bb}{\enm{\cal{B}}}
\newcommand{\Cc}{\enm{\cal{C}}}
\newcommand{\Ii}{\enm{\cal{I}}}
\newcommand{\Oo}{\enm{\cal{O}}}
\newcommand{\Ss}{\enm{\cal{S}}}
\newcommand{\Tt}{\enm{\cal{T}}}
\newcommand{\Vv}{\enm{\cal{V}}}
\newcommand{\Xx}{\enm{\cal{X}}}
\renewcommand{\phi}{\varphi}
\renewcommand{\theta}{\vartheta}
\renewcommand{\epsilon}{\varepsilon}
\begin{document}

\title{{Twistor fibers in Hypersurfaces of the Flag Threefold}}

\author[A. Altavilla]{Amedeo Altavilla}\address{Dipartimento di Matematica,
  Universit\`a degli Studi di Bari `Aldo Moro', via Edoardo Orabona, 4, 70125,
  Bari, Italia}\email{amedeo.altavilla@uniba.it}

\author[E. Ballico]{Edoardo Ballico}\address{Dipartimento Di Matematica,
  Universit\`a di Trento, Via Sommarive 14, 38123, Povo, Trento, Italia}
\email{edoardo.ballico@unitn.it}

\author[M. C. Brambilla]{Maria Chiara Brambilla}\address{
Universit\`a Politecnica delle Marche, via Brecce Bianche, I-60131 Ancona, Italia}
\email{m.c.brambilla@univpm.it}

\thanks{All the authors are partially supported by GNSAGA. The first named author is partially supported by the INdAM project `Teoria delle funzioni ipercomplesse e applicazioni'.}


\subjclass[2010]{Primary: 32L25, 14M15; Secondary: 14D21, 32M10, 14J26} 
\keywords{Flag threefold, twistor projection, twistor fiber, surfaces, bidegree}

\begin{abstract} 
{We study surfaces of bidegree $(1,d)$ contained in the flag threefold {in relation to} the twistor projection.
In particular, we focus on the number and the arrangement of twistor fibers contained in such surfaces.
First, we prove that there is no
irreducible surface of bidegree $(1,d)$ containing $d+2$ twistor fibers {in general position}.
{On the other hand, given any collection of $(d+1)$ twistor fibers satisfying a mild natural constraint, we prove the existence of a surface of bidegree $(1,d)$ that contains them.}
 We improve our results for  $d=2$ or $d=3$, by removing all the generality hypotheses.}
\end{abstract}

\maketitle
\setcounter{tocdepth}{1} 
\setcounter{tocdepth}{2} 

\tableofcontents

\section{Introduction}

The study of the twistor geometry of the flag {threefold} is motivated by the search for Riemannian $4$-manifolds admitting several integrable complex structures 
compatible with the prescribed metric (see e.g.~\cite{fujpon,pontecorvomatann,salviac}). 
{In this context, a recent trend is the investigation
of specific cases} in order to find explicit examples~\cite{altbalagag,altsarf,armpovsal,chirka,gensalsto}.

The flag threefold $\FF$ can be seen as the twistor space  
of 
the complex projective plane $\mathbb{P}^{2}$
endowed with all its standard structures~\cite{AHS, Hitchin}, 
$$
\pi:\FF\to\PP^{2}.
$$
{The threefold $\FF$ embeds in $\PP^{2}\times\PP^{2}$, hence it is possible to define a natural notion of bidegree 
$(d_{1},d_{2})$ for {curves and surfaces}
in $\FF$.}

In \cite{abbs} we have started a detailed analysis of the geometry of the algebraic curves and surfaces contained in {the flag threefold} $\FF$, in relation to the twistor projection.
In particular,  twistor fibers are smooth {irreducible} curves of bidegree (1,1). In~\cite{abb} we gave a first bound on the maximum number of smooth {irreducible curves of bidegree} $(1,1)$ contained in a smooth surface $S\subset\FF$. 
Here, 
by focussing our attention on a particular family of surfaces, we obtain {stronger and more
significant} results. In fact,
we analyse the case of bidegree $(1,d)$ surfaces in $\FF$ and study the number and the 
arrangements of twistor fibers contained in them.

{A surface $S$ of bidegree $(1,d)$  
can be realised as a $d : 1$ branched cover of $\PP^{2}$, or alternatively as a blow up of $\PP^{2}$ in $n = 1+d+d^{2}$ points. If $d = 1$ then $S$ is a (toric) del Pezzo surface of degree 6, and this case was studied in~\cite{abbs}. 
In the present paper we give results for arbitrary $d$, as well as focussing on $d = 2$ and $d = 3$.
In general, if $S$ has bidegree $(d_{1},d_{2})$, the twistor projection $\pi$ restricted to $S$ is a branched cover of degree $d_{1}+d_{2}$ of $\PP^{2}$, so if
$(d_{1},d_{2})=(1,d)$, it is natural to compare our results with those obtained for degree $d+1$ surfaces in $\PP^{3}$ viewed as the
twistor space of the $4$-sphere~\cite{Hitchin}. In the latter case, surfaces of degree $2$ and $3$ have been studied in some detail.
In particular, surfaces of degree $2$ in $\PP^{3}$ might contain $0,1$ or $2$
twistor fibers and if a surface of degree $2$ contains more than $2$ twistor fibers, then it contains infinitely many of them~\cite{salviac}. This is completely analogous to what we found in~\cite{abbs} for surfaces of bidegree $(1,1)$ in $\FF$.
On the contrary, for $d=2$ we observe a  difference between the two cases.
Indeed, for degree $3$ surfaces in $\PP^{3}$, the maximum number of twistor fibers contained in a smooth surface is $5$~\cite{armpovsal}, while we prove here that this number reduces to $4$ for smooth surfaces of bidegree $(1,2)$ in $\FF$.}

{In order to present our results, we need to introduce some definitions.}
The flag threefold can be explicitly defined as $$\FF:=\{(p,\ell)\in\PP^{2}\times\PP^{2}\,|\,p\ell=0\},$$ where $p=[p_{0}:p_{1},p_{2}],\ell=[\ell_{0}:\ell_{1}:\ell_{2}]\in\PP^{2}$ and $p\ell=p_{0}\ell_{0}+p_{1}\ell_{1}+p_{2}\ell_{2}$. The notation $(p,\ell)$, would recall a couple \textit{(point,line)}, 
and the condition $p\ell=0$ translates as \textit{$p$ belongs to $\ell$}. In order to simplify the notation, 
we identify the second factor $\PP^{2\vee}$ with $\PP^{2}$.  

We have three projection maps: $\pi_{1},\pi_{2},\pi:\FF\to\PP^{2}$, defined as
$$
\pi_{1}(p,\ell)=p,\quad \pi_{2}(p,\ell)=\ell,\quad\pi(p,\ell)=\bar p\times\ell={[\bar p_{1}\ell_{2}-\bar p_{2}\ell_{1},\bar p_{2}\ell_{0}-\bar p_{0}\ell_{2},\bar p_{0}\ell_{1}+\bar p_{1}\ell_{0}],}
$$
where the third one is the twistor map. The fibers of such three maps are the object of our investigation.
 In particular, for any $q\in\PP^{2}$,  the three fibers
 $\pi_{1}^{-1}(q), \pi_{2}^{-1}(q)$ and $\pi^{-1}(q)$ are curves of bidegree $(0,1), (1,0)$ and $(1,1)$, respectively.
While the fibers of $\pi_{1}$ and $\pi_{2}$ exhaust the family of bidegrees $(1,0)$ and $(0,1)$ curves,
twistor fibers are a (non-open Zariski dense) subset of those of bidegree $(1,1)$. 
{It was shown in~\cite[Section 3.1]{abbs} that any bidegree $(1,1)$ curve has Hilbert polynomial equal to $2t+1$
with respect to the standard Segre embedding of $\PP^{2}\times\PP^{2}$; for this reason we will call them  \textit{conics}.}
There are only two types of bidegree $(1,1)$ curves: the reducible ones (union of a bidegree 
$(1,0)$ and of a bidegree $(0,1)$ curves intersecting at a point), and the smooth ones. All of them can be described as
$$
L_{q,m}:=\{(p,\ell)\in\FF\,|\,pm=0,\,q\ell=0\},
$$
where $q,m\in\PP^{2}$ and, the reducible and smooth cases are obtained for $qm=0$ or $qm\neq 0$, respectively.

In twistor theory, an important role is played by an antiholomorphic involution without fixed points, which identifies twistor fibers~\cite{AHS,Hitchin}. In our case, this map can be defined as $j:\FF^{2}\to\FF^{2}$, where
$$j(p,\ell)=(\bar\ell,\bar p).$$
A smooth conic $L_{q,m}$ is a twistor fiber if and only if $j(L_{q,m})=L_{q,m}$ if and only if $m=\bar q$.
{Moreover, it is natural to classify objects in $\FF$ up to projective automorphisms  coming from the lift via $\pi$ of a unitary automorphism of $\PP^{2}$, i.e. a holomorphic isometry with respect to the Fubini-Study metric.
Such transformations of $\FF$ are exactly those which commute with $j$ (see~\cite[Lemma 5.4]{abbs} for the flag manifold case).
Thus, in particular, the number of twistor fibers contained in a given surface and their arrangement are unitary invariants.}
 
In order to state our main results we need some more notation.
We denote by $\Cc(1)=\Cc$ the set of smooth conics in $\FF$ and by $\Cc(n)$, $n\ge 2$, the set of
$n$ pairwise disjoint smooth conics. In an analogous way we define $\Tt(1)=\Tt\subset\Cc$ as the set of twistor
fibers and $\Tt(n)\subset\Cc(n)$, $n\ge 2$, as the set of $n$ pairwise disjoint twistor fibers.

We will see in Remark~\ref{LR} that for any couple of different smooth conics, there is a unique bidegree $(1,0)$ curve $L=\pi_{2}^{-1}(q_{2})$
and a unique bidegree $(0,1)$ curve $R=\pi_{1}^{-1}(q_{1})$ such that $L$ and $R$ intersect both smooth conics. In the case of 
a couple of twistor fibers we also have $R=j(L)$. 
{We say that three or more smooth conics are {\it collinear} if there is a $(1,0)$ curve $L$ which intersects all of them. To be collinear, for three or more smooth conics,
is a Zariski closed condition. }

To be more precise, 
in Definition \ref{definitionC*}, we define the set 
{ $\Cc^*(n)$ which parametrizes all $A\in \Cc(n)$ such that $\#(L\cap A) \le 2 $ for all curves $L$ of bidegree $(1,0)$. Clearly $\Cc^*(1)= \Cc(1)$, and $\Cc^*(2)= \Cc(2)$, while for $n\ge3$
the open set $ \Cc^*(n)$ is given by the set of disjoint smooth conics such that no three of them are collinear.
}
Moreover, we set $\Tt^*(n):= \Tt(n)\cap \Cc^*(n)$. 
In Theorem~\ref{c01} we characterize the elements $A\in\Cc^{*}(d+1)$  to be those which do not
obstruct the linear system $|\Ii_{A}(1,d)|$. 

We now summarize the main results of the paper.
In Section~\ref{Section1d}, we study surfaces of bidegree $(1,d)$ containing a certain number of smooth conics or twistor fibers, and we prove 
the following two theorems.
\begin{theorem}\label{u6}
For any $d\in \NN$ and  $A\in \Tt^*(d+2)$, there is no irreducible surface of bidegree $(1,d)$ containing $A$. 
\end{theorem}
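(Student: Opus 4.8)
The plan is to argue by contradiction, comparing two bounds for the self-intersection of a single fibre on a smooth model of $S$. Suppose $S$ is irreducible of bidegree $(1,d)$ and contains the $d+2$ pairwise disjoint twistor fibres $L_1,\dots ,L_{d+2}$ of $A\in\Tt^{*}(d+2)$, say $L_j=L_{q_j,\bar q_j}$. First I record the geometry: the fibre of $\pi_2|_S$ over a general $\ell$ is cut out by the two linear-in-$p$ equations $p\ell=0$ and $F=0$ (where $F$ defines $S$), hence is a single point, so $\pi_2|_S\colon S\to\PP^2$ is birational. Moreover $\pi_2$ maps each $L_j$ isomorphically onto the line $r_j=\{q_j\ell=0\}$, and the $r_j$ are pairwise distinct since the $q_j$ are. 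Finally, because $(1,0)$-curves are exactly the fibres of $\pi_2$, the hypothesis $A\in\Tt^{*}(d+2)$ (no three of the $L_j$ collinear) translates into the statement that \emph{no three of the lines $r_1,\dots ,r_{d+2}$ are concurrent}.

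Now pass to a smooth model $\mu\colon\tilde S\to S$ (normalisation followed by the minimal resolution), let $\tilde L_j$ be the strict transform of $L_j$, and let $\tilde\pi_2\colon\tilde S\to\PP^2$ be the induced birational \emph{morphism}, so a composition of point blow-ups. For the upper bound, note that $\tilde L_j$ is the strict transform of the smooth curve $r_j$, whence $\tilde L_j^2=1-\sum_c m_c^2$, the sum running over the centres $c$ of $\tilde\pi_2$ with $m_c\le 1$ the multiplicity there of (the successive strict transforms of) $r_j$. Since $L_i\cap L_j=\emptyset$, the strict transforms remain disjoint, so every point $x_{ij}=r_i\cap r_j$ must be a centre of $\tilde\pi_2$ (otherwise $\tilde L_i$ and $\tilde L_j$ would meet over it). As the $r_i$ are distinct with no three concurrent, the $d+1$ points $\{x_{ij}:i\ne j\}$ are \emph{distinct} centres lying on $r_j$, so $\sum_c m_c^2\ge d+1$ and therefore $\tilde L_j^2\le -d$.

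For the matching lower bound I would use adjunction twice. On $\FF$, being a $(1,1)$-divisor in $\PP^2\times\PP^2$, one has $K_\FF=\Oo_\FF(-2,-2)$, so for the Cartier divisor $S\in|\Oo_\FF(1,d)|$ adjunction gives the Gorenstein dualising sheaf $K_S=\Oo_S(-1,d-2)$; since each $L_j$ is a smooth $(1,1)$-curve with $h_1\cdot L_j=h_2\cdot L_j=1$ (here $h_i=\pi_i^{*}\Oo(1)$), this yields $K_S\cdot L_j=d-3$. On the smooth surface $\tilde S$ adjunction reads $\tilde L_j^2=-2-K_{\tilde S}\cdot\tilde L_j$, so it suffices to show $K_{\tilde S}\cdot\tilde L_j\le K_S\cdot L_j=d-3$, which gives $\tilde L_j^2\ge 1-d$. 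Combining the two bounds forces $1-d\le\tilde L_j^2\le -d$, i.e. $1\le 0$, the desired contradiction.

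The estimate $K_{\tilde S}\cdot\tilde L_j\le d-3$ is where the main difficulty lies, and it is the only point that is not completely routine. When $S$ is smooth along $\bigcup_j L_j$ it is immediate, since then $\tilde L_j^2=1-d$ by the computation in $\FF$ and the upper bound already contradicts it. In general one must control the discrepancies: along the normalisation the canonical class drops by the effective conductor, and for the minimal resolution of a normal surface singularity all discrepancies are nonpositive (the absence of exceptional $(-1)$-curves gives $K_{\tilde S}\cdot F_i\ge 0$ on each exceptional $F_i$, and the negative-definiteness of the exceptional intersection matrix then forces the coefficients to be $\le 0$). Since $\tilde L_j$ meets these effective exceptional and conductor cycles nonnegatively, the inequality $K_{\tilde S}\cdot\tilde L_j\le K_S\cdot L_j$ persists, upgrading the statement from smooth to arbitrary irreducible $S$. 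I expect this sign bookkeeping on the minimal resolution to be the delicate step of the argument.
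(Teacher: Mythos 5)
Your approach is genuinely different from the paper's proof, which runs an induction on $d$ using the residual exact sequence with respect to the $(0,1)$-surface through one fiber and reduces to an explicit analysis of twistor fibers on $(1,1)$-surfaces. Most of your intersection-theoretic argument is sound: the birationality of $\pi_2|_S$, the translation of $A\in\Tt^*(d+2)$ into non-concurrency of the lines $r_j=\pi_2(L_j)$, the upper bound $\tilde L_j^2\le -d$ (disjointness of the $\tilde L_j$ forces each of the $d+1$ distinct points $r_i\cap r_j$ to be a blown-up center on $r_j$), the adjunction computation $K_S\cdot L_j=d-3$, and the reduction of everything to the inequality $K_{\tilde S}\cdot\tilde L_j\le K_S\cdot L_j$. (One can check the numerics against the smooth $(1,1)$ case: there the conics are the classes $h-e_i$ with square $0=1-d$, and disjointness forces them all to meet $e_i$, i.e.\ collinearity, exactly as your two bounds predict.)

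The genuine gap is in the step you yourself flag, and it is not merely ``sign bookkeeping.'' Writing $K_{\tilde S}=\mu^*K_S-C'+N$ with $C'$ the effective (pulled-back) conductor cycle of the normalization and $N\le 0$ exceptional for the minimal resolution, your conclusion needs $C'\cdot\tilde L_j\ge 0$. An effective cycle meets a curve nonnegatively only when the curve is \emph{not one of its components}, so you must rule out that $L_j$ lies inside the non-normal locus of $S$ -- and you never do. This is a substantive issue here: non-normal surfaces of bidegree $(1,d)$ whose singular locus is a whole curve meeting every fiber of the configuration actually occur in this paper (Theorem~\ref{aaa1} produces irreducible $(1,2)$-surfaces containing four twistor fibers, singular along a $(1,0)$-curve that meets all four), so codimension-one singularities crowding the $L_j$ cannot be waved away. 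Fortunately the containment can be excluded by the B\'ezout fact you already set up: if $P\in \mathrm{Sing}(S)$ and the $(1,0)$-curve $\pi_2^{-1}(\pi_2(P))$ were not contained in $S$, its scheme-theoretic intersection with $S$ would have length at least $2$ at $P$, contradicting the fact that a $(1,0)$-curve not lying on $S$ meets $S$ in a single reduced point; hence every singular point of $S$ lies on a $(1,0)$-curve contained in $S$. If $L_j\subseteq\mathrm{Sing}(S)$, these fibers would sweep out the entire $(0,1)$-surface $\pi_2^{-1}(r_j)$, which would then lie in the irreducible surface $S$ of bidegree $(1,d)$ -- absurd. With this lemma inserted, $\tilde L_j$ is not a component of $C'$ (nor of $N$, being non-exceptional), the key inequality holds, and your proof closes; without it, the decisive step is unjustified.
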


\begin{theorem}\label{new-thm}
Fix integer {$d\ge 1$} and $0\le n \le {d+2}$. There is an irreducible   $S\in |\Oo_{\FF}(1,d)|$ containing exactly $n$ twistor fibers. 
\end{theorem}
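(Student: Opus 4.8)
The plan is to split the range of $n$ into three regimes governed by the numerology of $|\Oo_{\FF}(1,d)|$. The two inputs I would record first are that, from the ideal sequence of $\FF\subset\PP^{2}\times\PP^{2}$, one has $h^{0}(\Oo_{\FF}(1,d))=(d+1)(d+3)$, and that a smooth conic $C$, being a bidegree $(1,1)$ curve, satisfies $\Oo_{\FF}(1,d)|_{C}\cong\Oo_{\PP^{1}}(d+1)$ and so imposes at most $d+2$ conditions. The decisive arithmetic is that $n(d+2)\le h^{0}(\Oo_{\FF}(1,d))-1$ holds exactly for $n\le d+1$ and fails by precisely $2$ at $n=d+2$. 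Theorem~\ref{c01} turns this count into equalities: for $A\in\Cc^{*}(d+1)$ the conics impose independent conditions, so $|\Ii_{A}(1,d)|$ has the expected dimension, and the same holds a fortiori for any $A\in\Cc^{*}(n)$ with $n\le d+1$.

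For $0\le n\le d+1$ I would take a general $A\in\Tt^{*}(n)$. Then $|\Ii_{A}(1,d)|$ is nonempty of dimension $(d+1)(d+3)-n(d+2)\ge d+1\ge2$. Its base locus consists of $A$ together with the connecting bidegree $(1,0)$ curves of Remark~\ref{LR} (indeed any two disjoint conics on $S$ force that curve into $S$, since a bidegree $(1,0)$ curve has intersection number $1$ with any bidegree $(1,d)$ surface), so it has codimension $2$ and Bertini gives an irreducible general member. The case $n=0$ is the unconstrained system.

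The extremal value $n=d+2$ is where, in accordance with Theorem~\ref{u6}, the defect of $2$ must be produced by special position; I treat $d\ge2$ here, the case $d=1$ (where $n\le3$) being covered by~\cite{abbs}. I would choose $a_{1},\dots,a_{4}$ on a line of $\PP^{2}$, so that the twistor fibers $C_{a_{i}}=L_{a_{i},\bar a_{i}}$ all meet the single bidegree $(1,0)$ curve $L$ dual to that line, and adjoin $d-2$ further general twistor fibers, keeping all of them pairwise disjoint (the pairwise meeting locus is a proper real-algebraic condition). Since $k$ disjoint conics sharing one $L$ force $L\subset S$ and impose only $kd+k+2$ conditions---a defect of $k-2$, read from the genus--degree formula on the rational tree $L\cup C_{a_{1}}\cup\dots\cup C_{a_{k}}$---the choice $k=4$ yields defect $2$, whence $h^{0}(\Ii_{A}(1,d))\ge1$ and a surface through all $d+2$ fibers exists.

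The main obstacle, in every regime, is the word \emph{exactly}: that the surface carries no further twistor fiber and is irreducible. The clean mechanism is a dimension count on the incidence correspondence $\{(S,b):C_{b}\subset S\}$, where $b\mapsto C_{b}=L_{b,\bar b}$ parametrises twistor fibers as a real $4$-dimensional family. For a fixed extra $b$, imposing $C_{b}\subset S$ cuts the relevant family of surfaces in real codimension $2(d+2)$, and since $2(d+2)>4$ for all $d\ge1$, the locus of surfaces acquiring an extra fiber has positive real codimension; hence a general member has only the prescribed fibers. When $n\le d+1$ one checks that adjoining one more fiber keeps the configuration in $\Tt^{*}$, so the properness of $|\Ii_{A\cup C_{b}}(1,d)|\subsetneq|\Ii_{A}(1,d)|$ is supplied by Theorem~\ref{c01} for $n+1\le d+1$ and by Theorem~\ref{u6} for $n+1=d+2$; when $n=d+2$ the same genericity is obtained by also varying $A$ in its positive-dimensional family of admissible collinear configurations. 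Irreducibility is then either Bertini (positive-dimensional systems) or a direct check that no proper bidegree summand can contain all the prescribed disjoint conics. The genuinely delicate feature throughout is that the twistor family is only real-analytic, so the semicontinuity must be run in the real-analytic rather than the Zariski category.
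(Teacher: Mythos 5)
Your treatment of the range $0\le n\le d+1$ follows essentially the same skeleton as the paper's Theorem~\ref{u5} (cohomological vanishing on $\Tt^*$, then a real--dimension count against the $4$--dimensional family of twistor fibers), but your construction for the extremal case $n=d+2$ is genuinely wrong for $d\ge 3$, and this is the decisive gap. Take your configuration $A=A_4\cup B$, with $A_4$ four twistor fibers meeting a common $(1,0)$ curve $L$ and $B$ a union of $d-2$ further general twistor fibers (incidentally, distinct twistor fibers are automatically disjoint, being fibers of $\pi$, so no condition needs to be imposed there). Your count $h^0(\Ii_A(1,d))\ge 1$ is correct, but for $d\ge 3$ \emph{every} element of $|\Ii_A(1,d)|$ is reducible, so no surface you produce can be irreducible. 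Indeed, let $C$ be a component of $B$ and $Y$ the unique $(0,1)$ surface containing $C$. Since $C$ is general, $L\cap C=\emptyset$; as $L\cdot \Oo_{\FF}(0,1)=0$, either $L\subset Y$ or $L\cap Y=\emptyset$, and $L\subset Y$ is impossible because the $(1,0)$ curves in $Y\cong F_1$ are the ruling fibers and every ruling fiber meets $C$. By the same reasoning, and by generality of $B$, none of the finitely many $(1,0)$ curves joining two components of $A\setminus C$ lies in $Y$. Hence the $d+1$ points of $(A\setminus C)\cap Y$ lie on pairwise distinct ruling fibers, so $h^0(Y,\Ii_{A\cap Y,Y}(1,d))=h^0(F_1,\Ii_{(A\setminus C)\cap Y}(df))=h^0(\PP^1,\Ii_Z(d))=0$, where $Z$ consists of $d+1$ distinct points. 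The residual sequence \eqref{successione} then shows that multiplication by the equation of $Y$ gives an isomorphism $H^0(\Ii_{A\setminus C}(1,d-1))\to H^0(\Ii_A(1,d))$, i.e.\ $Y$ is a \emph{fixed component} of $|\Ii_A(1,d)|$. Iterating over the components of $B$, every element of $|\Ii_A(1,d)|$ equals $Y_1\cup\cdots\cup Y_{d-2}\cup W$ with $W\in|\Ii_{A_4}(1,2)|$: the system is nonempty, exactly as you computed, but contains no irreducible member. This is precisely the splitting mechanism of Lemma~\ref{bo5}, and it explains why the paper takes \emph{all} $d+2$ fibers collinear in Theorem~\ref{nok1}: then Lemma~\ref{no2} gives $h^1(\Ii_A(1,d))\ge d+1$, hence $h^0(\Ii_A(1,d))\ge d$, and the system is large enough to escape its reducible elements. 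Your engineered defect of $2$ is too small; at $n=d+2$ the configuration must be made more special, not less. (For $d=2$ your configuration coincides with the paper's $\Tt(4)^-$ and does work, by Lemma~\ref{bo2} and Theorem~\ref{aaa1}.)

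There are also two gaps in the range $0\le n\le d+1$. First, ``base locus of codimension two plus Bertini'' is not a valid irreducibility argument: Bertini's irreducibility theorem requires the system to have no fixed component and not be composite with a pencil, and neither follows from your base-locus claim (which is itself unproved; the paper establishes such descriptions only in special cases, e.g.\ Propositions~\ref{ee1} and~\ref{n3}). Non-compositeness is cheap here because $(1,d)$ is a primitive class in $\mathrm{Pic}(\FF)\cong\ZZ^2$, but excluding fixed components is exactly the content of the paper's enumeration of the reducible elements $W\cup Y$ in the proof of Theorem~\ref{u5} --- and, as the paragraph above shows, fixed components genuinely occur for natural configurations, so this step cannot be waved away. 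Second, your ``exactly $n$'' argument asserts that adjoining one more twistor fiber keeps the configuration in $\Tt^*$; this is false in general --- the extra fiber may meet the $(1,0)$ curve through two prescribed ones --- and on that stratum the complex codimension drops below $d+2$ because $h^1$ jumps. The count can be repaired, but only by stratifying as the paper does (the sets $\Cc(A,L)$ and $\Cc(A,x)$, together with Lemma~\ref{ii1}), checking on the collinear stratum that the residual codimension still exceeds the real dimension $2$ of the family of twistor fibers meeting a fixed $(1,0)$ curve.
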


{
We also show, in Theorem~\ref{u5}, that 
the first result is sharp. Indeed,} for any $A\in\Tt^{*}(n)$, with
$0\le n\le d+1$, we are able to prove the existence of an irreducible surface of
bidegree $(1,d)$ containing $A$ and no other twistor fibers. This last issue requires
 some effort and the proof is divided in {a detailed analysis} of several cases.

{Theorem \ref{new-thm} is a consequence of Theorem \ref{u5} and Theorem~\ref{nok1}.
More precisely, in Theorem \ref{u5}, for $0\le n\le d+1$,  we prove that,
fixed any union $A$ of $0\le n \le d+1$ non-three-by-three collinear twistor fibers, there is an irreducible $(1,d)$-surface containing $A$ and no other twistor fibers.
The extremal case $n=d+2$ is considered in Theorem~\ref{nok1}, where we prove that,
given $d+2$ general collinear twistor fibers, there is an irreducible surface of bidegree $(1,d)$ containing them.}

In Section \ref{sez-4}, we focus on surfaces of bidegree $(1,2)$ and $(1,3)$. 
The main  results are summarized by the following statements:
\begin{theorem}\label{i2i1}
Fix $0\le n \le 3$. There is a smooth $S\in |\Oo_{\FF}(1,2)|$ containing exactly $n$ twistor fibers. 
Moreover, there exists a bidegree $(1,2)$ irreducible surface containing exactly $4$ twistor fibers.
\end{theorem}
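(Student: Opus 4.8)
The statement splits into two assertions of different flavour, which I would handle separately. The existence of an \emph{irreducible} $(1,2)$-surface containing exactly $n$ twistor fibers for every $0\le n\le 4$ is already the content of Theorem~\ref{new-thm} specialised to $d=2$; in particular the ``moreover'' clause is the case $n=4$ and requires nothing new beyond a remark on the configuration. Indeed, by Theorem~\ref{u6} no irreducible $(1,2)$-surface can contain a configuration in $\Tt^{*}(4)$; since any four distinct twistor fibers are automatically pairwise disjoint (they are fibers of $\pi$), the four fibers produced by Theorem~\ref{new-thm} lie in $\Tt(4)\setminus\Tt^{*}(4)$, so some $(1,0)$-curve must meet three of them. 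This is consistent with the case $n=d+2=4$ being the degenerate, collinear situation treated in Theorem~\ref{nok1}. Hence the genuine work lies in upgrading the surfaces of Theorem~\ref{u5} from irreducible to \emph{smooth} in the range $0\le n\le 3$.

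The plan rests on a dimension count. Twisting the ideal-sheaf sequence of $\FF\subset\PP^{2}\times\PP^{2}$ by $\Oo(1,2)$, with $\Ii_{\FF}=\Oo(-1,-1)$, gives $h^{0}(\FF,\Oo_{\FF}(1,2))=18-3=15$, so $\dim|\Oo_{\FF}(1,2)|=14$. Each smooth conic restricts $\Oo_{\FF}(1,2)$ to $\Oo_{\PP^{1}}(3)$ and hence imposes at most $4$ conditions; by Theorem~\ref{c01}, for $A\in\Tt^{*}(n)\subset\Cc^{*}(n)$ with $n\le 3$ these conditions are independent, so
\[
\dim|\Ii_{A}(1,2)|=14-4n\ \ge\ 2 .
\]
For $n=0$ I take $A=\varnothing$: since $\Oo_{\FF}(1,2)$ is base-point-free, Bertini's theorem immediately yields a smooth general $S\in|\Oo_{\FF}(1,2)|$. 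For $1\le n\le 3$ I instead apply Bertini to the positive-dimensional system $|\Ii_{A}(1,2)|$, obtaining that a general member is smooth away from the base locus, which contains $A$.

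It remains to secure smoothness \emph{along} the forced conics $A$ and to pin down the fiber count. For smoothness along $A$ I would argue by counting: fixing $x$ on a conic of $A$ and choosing local coordinates on $\FF$ in which that conic is a coordinate line, a member of $|\Ii_{A}(1,2)|$ is singular at $x$ exactly when the two coefficients controlling its normal derivatives at $x$ vanish, so ``singular at a prescribed $x\in A$'' is a codimension-$2$ condition provided the system realises both normal directions independently. Granting this, the members singular somewhere on the one-dimensional $A$ form a locus of dimension at most $\dim|\Ii_{A}(1,2)|-2+1<\dim|\Ii_{A}(1,2)|$, so the general member is smooth along $A$, hence everywhere. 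For the fiber count I use that Theorem~\ref{u5} already exhibits a member of $|\Ii_{A}(1,2)|$ with no twistor fiber outside $A$: a further twistor fiber $F$ with $A\cup F$ non-collinear would force $\dim|\Ii_{A\cup F}(1,2)|=14-4(n+1)<\dim|\Ii_{A}(1,2)|$ by Theorem~\ref{c01}, and an $F$ not in the base locus always drops the dimension, so the members acquiring an extra fiber form a proper subset. Its complement meets the open dense smooth locus, yielding a smooth $S\in|\Ii_{A}(1,2)|$ with exactly the $n$ fibers of $A$.

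The main obstacle is the codimension-$2$ claim, i.e.\ the surjectivity onto both normal directions along $A$, precisely in the tight case $n=3$ where $\dim|\Ii_{A}(1,2)|=2$ is smallest and one must rule out a base point of the net sitting on $A$. I expect this to follow from the non-collinearity built into $A\in\Tt^{*}(3)$ (no $(1,0)$-curve meets three of the conics), which prevents the net from degenerating along $A$; should the purely dimension-theoretic argument prove delicate in this extremal case, it can be replaced by the explicit exhibition of one smooth member for each $n\le 3$, which is entirely feasible in this low bidegree.
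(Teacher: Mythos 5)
Your reduction of the ``moreover'' clause to Theorem~\ref{new-thm} with $d=2$, $n=4$ is legitimate (that theorem is established before this one, via Theorem~\ref{nok1}), and your skeleton for $0\le n\le 3$ --- the dimension count $\dim|\Ii_A(1,2)|=14-4n$ via Corollary~\ref{cor1}/Theorem~\ref{c01}, Bertini off the base locus, a codimension-$2$ count for singular points on the base locus, and Theorem~\ref{u5} to exclude extra fibers --- is essentially the strategy the paper follows (Theorem~\ref{terzo-I}, Propositions~\ref{ee2} and~\ref{n2.1}). However, there is a genuine gap in the smoothness argument. For $n\ge 2$ the base locus $\Bb$ of $|\Ii_A(1,2)|$ is strictly larger than $A$: by Remark~\ref{m0000} and B\'ezout it contains every bidegree $(1,0)$ curve meeting two components of $A$ (for $n=2$ one has $A\cup L\subseteq \Bb\subseteq A\cup L\cup R$ by Proposition~\ref{ee2}, and for $n=3$ three such lines $L_1,L_2,L_3$ lie in $\Bb$). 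Bertini only gives smoothness away from $\Bb$, so your conclusion ``smooth along $A$, hence everywhere'' does not follow: you must also rule out singularities along these lines. This is not a pedantic point --- in the extremal case $n=4$ (Theorem~\ref{aaa1}) \emph{every} member of $|\Ii_A(1,2)|$ is singular along the line $L$, so singularity along base-locus lines is exactly the phenomenon that separates $n\le 3$ from $n=4$. The paper handles it separately: near $L$ it exhibits one member ($M\cup Y'$ with $Y'$ a general $(0,1)$-surface disjoint from $L$) smooth along $L$ and invokes semicontinuity, and for points of $R$ it performs a distinct cohomological computation.

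The second weakness is the codimension-$2$ claim itself, i.e.\ $h^0(\Ii_{2p\cup A}(1,2))=h^0(\Ii_A(1,2))-2$ for $p\in A$. You correctly identify it as the main obstacle but leave it unproved, offering only the heuristic that non-collinearity should prevent degeneration, with a fallback of exhibiting explicit smooth members that is not carried out. This claim is the technical core of the paper's proof (Proposition~\ref{ee2}, part \textit{(3)}): it is established through residual exact sequences with respect to suitable $(0,1)$-surfaces, the vanishing $h^1(\Ii_{E\cup p}(1,1))=0$, and very-ampleness of line bundles on $F_1$, with separate cases for $p\in A\setminus R$, $p\in R\setminus A$ and $p\in A\cap R$; nothing about it is formal. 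A smaller point: your assertion that the surfaces acquiring an extra twistor fiber ``form a proper subset'' needs the real-dimension count (the union, over the real $4$-dimensional family $\Tt(1)$, of complex codimension $\ge 3$ linear subspaces has positive real codimension), or simply cite the ``no other twistor fibers'' clause of Theorem~\ref{u5} for the general member; as written, a union of proper subspaces over a positive-dimensional family need not be proper.
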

\begin{theorem}\label{n6}
There is no irreducible $S\in |\Oo_{\FF}(1,2)|$ containing at least ${5}$ twistor fibers.
\end{theorem}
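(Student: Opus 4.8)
The plan is to argue by contradiction: suppose an irreducible $S\in|\Oo_\FF(1,2)|$ contains five twistor fibers $T_1,\dots,T_5$. Being distinct fibers of $\pi$, they are automatically pairwise disjoint, so each four of them lie in $\Tt(4)$. By Theorem~\ref{u6} applied with $d=2$, no element of $\Tt^\ast(4)$ is contained in an irreducible $(1,2)$-surface; hence no four of the $T_i$ can belong to $\Cc^\ast(4)$, i.e. among any four of them three are collinear. Writing $T_i=L_{q^{(i)},\bar q^{(i)}}$, recall that three such fibers are collinear exactly when the common $(1,0)$-curve $\pi_2^{-1}(q_2)$ meets all three, which happens iff $q^{(i)}\cdot q_2=0$ for all three, i.e. iff the base points $q^{(i)}$ are collinear in $\PP^2$. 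I thus obtain five points of which every four contain a collinear triple. A short incidence count now forces four of them onto one line: if no line carried four points, every line would contain at most three of them, and a line through exactly three points would ``cover'' only the two $4$-subsets containing its triple; covering all five $4$-subsets would then require at least three distinct such trisecant lines, but three lines meeting pairwise in at most one point pass through at least $3\cdot 3-\binom{3}{2}=6$ distinct points, impossible for five points. Hence at least four of the $T_i$ are collinear.

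Second, I would record the ambient intersection numbers in $\FF\subset\PP^2\times\PP^2$ that govern collinear families. For a $(1,0)$-curve $R=\pi_2^{-1}(q_2)$ one computes $R\cdot S=1$, and for a $(0,1)$-curve $L=\pi_1^{-1}(p)$ one computes $L\cdot S=2$. Now suppose $T_{i_1},\dots,T_{i_k}$ are collinear with base points on $\lambda=q_2^{\perp}$. Then $R=\pi_2^{-1}(q_2)$ meets each $T_{i_j}$ in a point $r_{i_j}$, and $L^\ast:=\pi_1^{-1}(\bar q_2)=j(R)$ meets each $T_{i_j}$ in a point $y_{i_j}=(\bar q_2,\,q^{(i_j)}\times\bar q_2)$. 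Since $\bar q_2\notin\lambda$ (because $\bar q_2\cdot q_2=\sum_i|(q_2)_i|^2\neq0$), a direct check shows the $r_{i_j}$ are pairwise distinct on $R$ and the $y_{i_j}$ pairwise distinct on $L^\ast$. Comparing with $R\cdot S=1$ and $L^\ast\cdot S=2$, I conclude that $k\ge 2$ forces $R\subset S$ and $k\ge 3$ forces in addition $L^\ast=j(R)\subset S$.

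The crux is then the claim that no irreducible $(1,2)$-surface contains four collinear twistor fibers. For $S$ normal this follows from the del Pezzo structure. Indeed $\pi_2|_S$ is birational and, after passing to the minimal resolution, realises $S$ as the blow-up of $\PP^2$ at seven points, a weak del Pezzo surface of degree two with $-K_S=\Oo_S(1,0)$. On the resolution every twistor fiber is a smooth rational curve with $T\cdot(-K_S)=1$ and $T^2=-1$, hence a $(-1)$-curve of class $H-E_a-E_b$ with $H=\pi_2^\ast\Oo(1)$; the forced curve $R$ is an exceptional $E_m$, while $L^\ast$ is a $(-2)$-curve of class $H-E_a-E_b-E_c$ with $m\notin\{a,b,c\}$ (as $R\cap L^\ast=\varnothing$). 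The requirement that each collinear $T_i$ meet both $R=E_m$ and $L^\ast$ fixes one index of $T_i$ to be $m$ and forbids the other from lying in $\{a,b,c\}$, leaving only $7-1-3=3$ admissible classes. Thus at most three collinear twistor fibers occur, contradicting the four produced in the first step.

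The main obstacle is the non-normal (equivalently, non-del-Pezzo) case, where the resolution and the lattice computation are not directly available; note that the surface realising four twistor fibers in Theorem~\ref{i2i1} is precisely of this kind, so the bound ``at most three collinear'' is sharp and the normal/non-normal dichotomy cannot be avoided. Here I would invoke the explicit description of such degenerate $(1,2)$-surfaces obtained in the construction preceding Theorem~\ref{i2i1}, combined with the configuration $R,\,L^\ast=j(R)\subset S$ forced in the second step, to bound a collinear family by hand: passing to the normalization and controlling the behaviour of the twistor fibers along the non-normal locus, one shows again that a common $(1,0)$-secant of $S$ can carry at most three twistor fibers. Establishing this last bound \emph{uniformly}, without any smoothness or normality hypothesis, is where the real work lies; once it is in place, the contradiction with ``at least four collinear'' from the first step completes the proof.
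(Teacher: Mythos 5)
Your first two steps are sound: the reduction via Theorem~\ref{u6} plus the incidence count correctly forces at least four of the five fibers to be collinear (this is a cleaner substitute for case (a), the exclusion of $z=3$, in the paper's proof), and the B\'ezout argument forcing $R\subset S$ and $j(R)\subset S$ is exactly Remark~\ref{m0000}. The fatal problem is your announced crux: the claim that \emph{no irreducible $(1,2)$-surface contains four collinear twistor fibers} is false, and the paper itself proves it is false. Theorem~\ref{aaa1} (see also Theorem~\ref{i2i1}, Lemma~\ref{bo2} and Theorem~\ref{nok1} with $d=2$) constructs irreducible $S\in |\Oo_{\FF}(1,2)|$ containing exactly four twistor fibers; by Lemma~\ref{bo1} these four fibers are necessarily collinear, and $S$ is singular along the common $(1,0)$-secant $L$. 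Your lattice computation is correct as far as it goes: it shows that a \emph{smooth} (or normal, after passing to the minimal resolution) $(1,2)$-surface carries at most three collinear twistor fibers, which is consistent with the fact that the four-fiber examples are non-normal. But your plan for the non-normal case --- ``one shows again that a common $(1,0)$-secant of $S$ can carry at most three twistor fibers'' --- asserts precisely what Theorem~\ref{aaa1} disproves, and your own text is inconsistent on this point: you cite the four-fiber surface of Theorem~\ref{i2i1} as non-normal and in the same breath claim the bound ``at most three collinear'' survives there. No analysis of the normalization can rescue a false statement.

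The correct threshold is five collinear fibers, not four, and this is why the paper's proof contains two further steps that your proposal lacks. First, one upgrades ``four of the five fibers are collinear'' to ``all five are collinear'': if the maximal number $z$ of fibers met by a single $(1,0)$-curve $L$ were exactly $4$, one takes the unique $(0,1)$-surface $Y$ containing $C_1\cup L$ (with $C_1$ one of the collinear fibers); since $S$ is irreducible, $Y\not\subset S$, so the equation of $S$ induces a nonzero section of $\Ii_{A\cap Y,Y}(1,2)\cong \Ii_{(A\setminus C_1)\cap Y,F_1}(2f)$, which is impossible because $(A\setminus C_1)\cap Y$ contains three points of $L=h$ and distinct points of $h$ lie on distinct elements of $|f|$. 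Second, once all five fibers are collinear, Lemma~\ref{bo5} (with $d=2$, so $n=d+3=5$) shows by the same residual-sequence mechanism that \emph{every} element of $|\Ii_A(1,2)|$ must contain $Y$ as an irreducible component, contradicting the irreducibility of $S$. Without these two steps --- and with the false ``at most three collinear'' claim in their place --- your argument cannot be completed along the route you describe.
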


\begin{theorem}\label{n7}
There is no irreducible $S\in |\Oo_{\FF}(1,3)|$ containing at least $6$ twistor fibers.
\end{theorem}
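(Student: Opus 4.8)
The plan is to argue by contradiction and reduce the statement to an incidence problem about lines in $\PP^2$, which is then closed by combining Theorem~\ref{u6} with the twistor (reality) structure; the argument runs parallel to the bidegree $(1,2)$ statement (Theorem~\ref{n6}) but with more configurations to exclude. Suppose $S\in|\Oo_{\FF}(1,3)|$ is irreducible and contains pairwise disjoint twistor fibers $T_1,\dots,T_6$.

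I would first record the relevant intersection numbers on $\FF$. Writing $h_1,h_2$ for the restrictions to $\FF$ of the two hyperplane classes of $\PP^2\times\PP^2$, one has $h_1^3=h_2^3=0$ and $h_1^2h_2=h_1h_2^2=1$, so a curve $L$ of bidegree $(1,0)$ meets $S$ in $L\cdot S=h_1\cdot(h_1+3h_2)=1$ point. Hence if three of the contained fibers are \emph{collinear}, i.e. some $(1,0)$-curve $L$ meets all three, then $L$ meets $S$ in at least three distinct points (the $T_i$ being disjoint) and therefore $L\subset S$. I would then pass to the blow-up model furnished by the introduction: via $\pi_2$ the surface $S$ is birational to $\PP^2$, realised as the blow-up at the $n=1+d+d^2=13$ points. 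In $\mathrm{Pic}(S)$, with $H$ the pulled-back line class and $E_1,\dots,E_{13}$ the exceptional curves, one computes that each twistor fiber has class $H-E_i-E_j-E_k$ (a line through exactly three of the $13$ points), that the $(1,0)$-curves contained in $S$ are exactly the $E_m$, that two fibers are disjoint iff their lines share exactly one of the $13$ points, and that three fibers are collinear iff their lines pass through a common blown-up point.

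The combinatorial reduction comes next. The six fibers give six $3$-subsets of the marked points, pairwise meeting in exactly one point; let $r_m$ be the number of fibers through the $m$-th point. If no three fibers are collinear, then any five of them lie in $\Tt^*(5)$ and Theorem~\ref{u6} (case $d=3$, where $d+2=5$) excludes $S$; more generally, if all collinear triples shared one common fiber, deleting it would leave an element of $\Tt^*(5)$, again excluded. So the collinear triples have no common fiber. The incidence identities $\sum_m r_m=18$ and $\sum_m\binom{r_m}{2}=15$, together with this constraint, reduce the problem to a short list of arrangements: all six lines concurrent (six fibers on one $(1,0)$-curve), two disjoint concurrent triples, and a complete-quadrilateral pattern with four triple points (after absorbing any incidental overlaps into the concurrent case). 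The two-disjoint-triples case is eliminated purely combinatorially: a line of the first triple must meet the three lines of the second triple at marked points lying among its two marks other than its own concurrency point, forcing two of those three lines to share two marks and hence to coincide.

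The hard part, and the expected main obstacle, is what remains — six concurrent fibers and the complete-quadrilateral pattern — since both are realisable as ordinary line arrangements and so cannot be excluded by incidence alone; here the twistor reality structure is indispensable. I would put each configuration in normal form by a unitary change of coordinates, write $S=\{\,p\cdot\vec G(\ell)=0\,\}$ with $\vec G=(G_0,G_1,G_2)$ a cubic vector (well defined modulo $\vec\ell\cdot(\text{quadric})$), and encode ``$T_a\subset S$'' as the divisibility $(\bar a\times\ell)\cdot\vec G(\ell)\equiv 0 \pmod{a\cdot\ell}$ and ``$E_m\subset S$'' as $\vec G(c_m)\parallel c_m$ at each concurrency point $c_m$. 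The delicate step is to show that the resulting conditions on the coefficients of $\vec G$ — which are real-analytic rather than holomorphic, precisely because of the conjugate $\bar a$ — are either inconsistent or force $\vec G$ to acquire a common factor, i.e. force $S$ to be reducible, contradicting the hypothesis. Concretely this amounts to proving that a single $(1,0)$-curve in an irreducible such $S$ can meet too few twistor fibers to realise the concurrent pattern, and that the four-triple-point pattern is obstructed. This is exactly where the word \emph{twistor} (as opposed to arbitrary smooth conic) enters, and I expect it to be the most case-heavy and technical part of the proof.
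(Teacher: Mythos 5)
Your opening reduction is sound and close in spirit to the paper's first step: invoking Theorem~\ref{u6} with $d=3$ to exclude the case where no three of the six fibers are collinear (or where all collinear triples share one fiber) is exactly the right move, and your incidence count correctly narrows what survives to ``all six fibers meeting one $(1,0)$-curve'' or the complete-quadrangle pattern with four triple points. But there are two genuine gaps. First, your whole combinatorial dictionary --- fibers of class $H-E_i-E_j-E_k$, exactly $13$ exceptional curves, ``disjoint iff the lines share exactly one marked point'' --- rests on adjunction on a \emph{smooth} blow-up of $\PP^2$ at $13$ distinct points, whereas the theorem is about \emph{irreducible} surfaces. In this twistor setting the extremal surfaces are genuinely singular: Theorem~\ref{aaa1} shows that the irreducible $(1,2)$-surfaces containing $4$ fibers are singular along a line, and Theorem~\ref{nok1} (with $d=3$) produces irreducible $(1,3)$-surfaces through $5$ collinear twistor fibers, which are again not smooth blow-ups at $13$ distinct points. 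For a singular irreducible $S$ the birational map to $\PP^2$ can involve infinitely near points, and your identities $\sum_m r_m=18$ and $\sum_m\binom{r_m}{2}=15$ are no longer justified; so the argument as written addresses only the smooth case, which is precisely not where the difficulty of this theorem lies.

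Second, and decisively, the two configurations that survive your reduction are never excluded: you explicitly defer them (``the most case-heavy and technical part'') and offer only a coordinate strategy with reality conditions on $\vec G$, with no argument that those conditions are inconsistent or force reducibility. That deferred step is the actual content of the theorem. The paper closes exactly these cases with cohomological tools valid for arbitrary irreducible $S$: the concurrent case is Lemma~\ref{bo5} (six $=d+3$ collinear twistor fibers force every element of $|\Ii_A(1,3)|$ to have a $(1,1)$-component, contradicting irreducibility), which you could have invoked verbatim instead of proposing a computation; the low-concurrency cases (your quadrangle pattern corresponds to $z=3$ in the paper's notation) are handled by taking an irreducible $(1,1)$-surface $M$ through the maximal number $e$ of fibers, bounding $e\le 4$ via the bidegree $(7,5)$ of the cycle $S\cap M$, and then deriving contradictions from residual exact sequences on $(0,1)$-surfaces together with the already-proved Theorem~\ref{n6} for bidegree $(1,2)$. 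Without an argument of this kind, or a completed version of your coordinate analysis, the proposal does not prove the statement.
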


{The first existence result (Theorem \ref{i2i1}) follows from Theorem \ref{terzo-I}, for $0\le n\le 3$, and
Theorem~\ref{aaa1}, for the case $n=4$. 
In the extremal case $n=4$, we will also show that the surfaces are singular along a line.}

{The two non-existence results (Theorems \ref{n6} and \ref{n7})
are proved in the last Section \ref{last-subsection}. } An essential tool is Lemma~\ref{bo5} which states that if a surface of bidegree $(1,d)$ contains $d+3$ or more {collinear}
twistor fibers, then this surface is reducible and one of its components is a surface of bidegree $(1,1)$ containing $4$ of the
prescribed twistor fibers.

{The proofs of our results are based on cohomological methods and on a careful description of the linear systems of surfaces in the flag threefold.}

\section{Preliminaries and first results}\label{prelimi}

In this section we collect some known results about algebraic curves and surfaces in the flag threefold. Then we give first results on the space of bidegree $(0,d)$ and $(1,d)$-surfaces containing a certain number of twisted fibers. In particular, we introduce the concept of \textit{collinear} smooth conics and give a topological characterization in terms of the cohomology of certain ideal sheaves.

For most of the known material on $\FF$ we refer to~\cite{abbs} and to~\cite[Section 2]{abb}.
However, in order to be as self-contained as possible, we will recall some basic ideas and results here.

Consider the multi projective space $\PP^{2}\times \PP^{2}$; an element $(p,\ell)\in\PP^{2}\times \PP^{2}$ will be a pair written in the following form $p=[p_{0}:p_{1}:p_{2}]$, 
$\ell=[\ell_{0}:\ell_{1}:\ell_{2}]^{\top}$, so that
$p\ell=p_{0}\ell_{0}+p_{1}\ell_{1}+p_{2}\ell_{2}$.
Even though it is classically embedded in $\PP^{2}\times \PP^{2\vee}$, we can see $\FF:=\{(p,\ell)\in\PP^{2}\times\PP^{2}\,|\,p\ell=0\}$ as a hypersurface of bidegree $(1,1)$ of $\PP^2\times \PP^{2}$.  
We denote by $\Pi_1$ and $\Pi_2$ the two standard projections of $\PP^2\times \PP^{2}$ and use lower case for their restrictions, i.e. $\pi _i =\Pi_{i|\FF}$, $i=1,2$.
Thus, the two natural projections define a natural notion of bidegree for algebraic surfaces in $\FF$. Furthermore, for all $(a,b)\in \ZZ^2$ we have the following natural exact sequence
\begin{equation}\label{eqpp1}
0 \to \Oo_{\PP^2\times \PP^{2}}(a-1,b-1)\to \Oo_{\PP^2\times \PP^{2}}(a,b)\to \Oo_{\FF}(a,b)\to 0,
\end{equation}and, for any $(a,b)\in \NN^2$,  we get (see e.g.~\cite[Lemma 2.3]{abbs})
\begin{equation}\label{equation1}
h^0(\Oo_{\FF}(a,b)) =
\frac{(a+1)(b+1)(a+b+2)}{2}\qquad\mbox{ and }\qquad h^1(\Oo_{\FF}(a,b)) =0.
\end{equation}

It will be useful to recall from~\cite[Proposition 3.11]{abbs} the
multiplication rules in the Chow ring: 
\begin{equation}\label{tablechow}
\begin{split}
\Oo_{\FF}(1,0)\cdot\Oo_{\FF}(1,0)\cdot\Oo_{\FF}(1,0)=0,&\qquad
\Oo_{\FF}(1,0)\cdot\Oo_{\FF}(0,1)\cdot\Oo_{\FF}(1,0)=1,\\
\Oo_{\FF}(0,1)\cdot\Oo_{\FF}(1,0)\cdot\Oo_{\FF}(0,1)=1,&\qquad
\Oo_{\FF}(0,1)\cdot\Oo_{\FF}(0,1)\cdot\Oo_{\FF}(0,1)=0.
\end{split}
\end{equation}

\subsection{Curves in $\FF$ and smooth conics}\label{curvesandsurfaces}

Let us recall the notion of bidegree for the family of algebraic curves in $\FF$ already given in~\cite{abbs,abb}.

\begin{definition}\label{bidegree}
Let $C\subset\FF$ be an irreducible and reduced algebraic curve. We define the bidegree of $C$
as the couple of positive integers $(d_{1},d_{2})$, where
$d_{i}=0$ if $\pi_{i}(C)=\{x\}$, otherwise $d_{i}=\deg(\pi_{i}(C))\deg({\pi_{i}}_{|C})$.

If a curve $D$ has irreducible components $C_{1},\dots, C_{s}$ then the bidegree of $D$ is 
the sum of the bidegrees of $C_{1},\dots , C_{s}$.
\end{definition}

Recall from~\cite[Remark 2.4]{abb} that if a curve $C$ is such that $C\cdot \Oo_\FF(1,0)=d_1$ and $C\cdot \Oo_\FF(0,1)=d_2$, then
it has bidegree $(d_1,d_2)$.

From the multiplication table \eqref{tablechow} we can easily derive the following formula.
\begin{lemma}\label{uc1}
For any choice of non-negative integers $a,b,c,d$, the one-dimensional cycle $\Oo_{\FF}(a,b)\cdot \Oo_{\FF}(c,d)$ has bidegree 
$$(ad+b(c+d),a(c+d)+bc).$$
\end{lemma}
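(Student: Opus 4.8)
The plan is to reduce the statement to a bilinear expansion in the Chow ring, evaluated against the four triple intersection numbers already encoded in the multiplication table~\eqref{tablechow}. Write $H_1 := \Oo_{\FF}(1,0)$ and $H_2 := \Oo_{\FF}(0,1)$ for the two generating divisor classes. The cycle under consideration is the product $\gamma := (aH_1 + bH_2)\cdot(cH_1 + dH_2)$, which is a one-dimensional cycle since $\FF$ is a threefold. By the remark immediately preceding the statement, the bidegree of $\gamma$ is $(d_1,d_2)$ with $d_1 = \gamma\cdot H_1$ and $d_2 = \gamma\cdot H_2$, so it suffices to compute these two intersection numbers.

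First I would observe that, by commutativity of the Chow ring, the four products listed in~\eqref{tablechow} determine all of the degree-three intersection numbers:
\[
H_1^3 = 0,\qquad H_1^2 H_2 = 1,\qquad H_1 H_2^2 = 1,\qquad H_2^3 = 0.
\]
Then I would expand the defining product bilinearly,
\[
\gamma = (aH_1 + bH_2)\cdot(cH_1 + dH_2) = ac\,H_1^2 + (ad + bc)\,H_1 H_2 + bd\,H_2^2.
\]

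Finally I would pair $\gamma$ against $H_1$ and $H_2$ in turn and substitute the values above. Intersecting with $H_1$ gives $\gamma\cdot H_1 = ac\cdot 0 + (ad+bc)\cdot 1 + bd\cdot 1 = ad + b(c+d)$, and intersecting with $H_2$ gives $\gamma\cdot H_2 = ac\cdot 1 + (ad+bc)\cdot 1 + bd\cdot 0 = a(c+d) + bc$. By the cited remark this identifies the bidegree of $\gamma$ as $(ad + b(c+d),\, a(c+d)+bc)$, which is exactly the claim.

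There is no genuine obstacle in this argument: it is a direct bilinear expansion followed by evaluation against the known intersection pairing. The only point meriting a moment's care is the preliminary step of using commutativity to read off the full list of degree-three numbers $H_1^3, H_1^2 H_2, H_1 H_2^2, H_2^3$ from the particular orderings recorded in~\eqref{tablechow}, together with the observation that pairing a one-cycle against $H_1$ and $H_2$ recovers precisely its two components of bidegree.
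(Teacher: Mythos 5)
Your proof is correct and is essentially the paper's own argument: both expand $\Oo_{\FF}(a,b)\cdot\Oo_{\FF}(c,d) = ac\,\Oo_{\FF}(1,0)^{2} + (ad+bc)\,\Oo_{\FF}(1,0)\cdot\Oo_{\FF}(0,1) + bd\,\Oo_{\FF}(0,1)^{2}$ and then evaluate using the data of \eqref{tablechow}. The only difference is bookkeeping: the paper quotes the bidegrees $(0,1)$, $(1,1)$, $(1,0)$ of the three elementary one-cycles and sums them with coefficients $ac$, $ad+bc$, $bd$, whereas you recover the same two numbers by intersecting the expansion with $\Oo_{\FF}(1,0)$ and $\Oo_{\FF}(0,1)$ and invoking the numerical characterization of bidegree recalled just before the lemma---an equivalent evaluation.
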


\begin{proof}
We have $\Oo_{\FF}(a,b)\cdot \Oo_{\FF}(c,d) = ac \Oo_{\FF}(1,0)\cdot \Oo_{\FF}(1,0) + (ad+bc) \Oo_{\FF}(1,0)\cdot \Oo _{\FF}(0,1)+bd\Oo_{\FF}(0,1)\cdot \Oo_{\FF}(0,1)$. So the thesis is easily obtained by recalling that $ \Oo_{\FF}(1,0)\cdot \Oo_{\FF}(1,0)$ (resp. $\Oo_{\FF}(1,0)\cdot \Oo _{\FF}(0,1)$, resp. $\Oo_{\FF}(0,1)\cdot \Oo_{\FF}(0,1)$) is a one-dimensional cycle of bidegree $(0,1)$ (resp. bidegree $(1,1)$, resp. bidegree $(1,0)$).
\end{proof}

\begin{remark}\label{rem1001}
Note that the fibers of $\pi_{1}$ are algebraic curves of bidegree $(0,1)$,
while those of $\pi_{2}$ have bidegree $(1,0)$ (see, e.g.~\cite[Section 3]{abbs}). Moreover, all bidegree $(0,1)$ curves can be seen as complete intersections of two different $(1,0)$-surfaces (and analogously for bidegree $(1,0)$ curves).
\end{remark}

Among all the algebraic curves in $\FF$, we focus our attention
on the family of bidegree $(1,1)$ curves. These are described geometrically in~\cite[Section 3.1]{abbs} and are parameterized by $(q,m)\in\PP^{2}\times\PP^{2}$. In fact, as anticipated in the introduction, each of these curves can be written as
$$
L_{q,m}:=\{(p,\ell)\in\FF\,|\,p\in m,\,\ell\ni q\}=\{(p,\ell)\in\FF\,|\,q\ell=0,\,pm=0\}.
$$
 There are two types of these curves:
the smooth and irreducible ones (when $q m\neq 0$) and the union of a $(1,0)$ and
of a $(0,1)$ intersecting at one point (when $q m=0$, i.e. $(q,m)\in\FF$). In any case, any curve of bidegree $(1,1)$ can be seen as the complete intersection of a surface of bidegree $(1,0)$ with one of bidegree $(0,1)$. {As already mentioned in the introduction}, the $4$-dimensional family of smooth irreducible $(1,1)$ curves is denoted by $\Cc$. The elements of $\Cc$ are called \textit{smooth conics}. 
\begin{remark}\label{remC}
From the  definition of smooth conics, 
it is clear that, for any $C\in\Cc$ we have that $\pi_{i}(C)$ is
a line in $\PP^{2}$.
\end{remark}

\begin{remark}\label{LR}
Note that for any two different elements $L_{q,m}, L_{q',m'}\in\Cc$ there exists a unique curve $L$ of bidegree $(1,0)$ and a unique curve $R$ of bidegree $(0,1)$ such that $L$ and $R$ meets both
$L_{q,m}$ and $L_{q',m'}$ {at one point} (see Figure~\ref{fig1}). 
From the analysis made in~\cite[Section 3.1]{abbs} it is easy to see that $L=\pi_{2}^{-1}(q\times q')$ and $R=\pi_{1}^{-1}(m\times m')$, where $\times$ stands for the standard (formal) cross product. Equivalently, $L = \pi _2^{-1}(\mathrm{Sing}(\pi _2(A))$ and $R = \pi _1^{-1}(\mathrm{Sing}(\pi _1(A))$.
\begin{figure}[h]
\vspace{-5pt}
\includegraphics[scale=0.2]{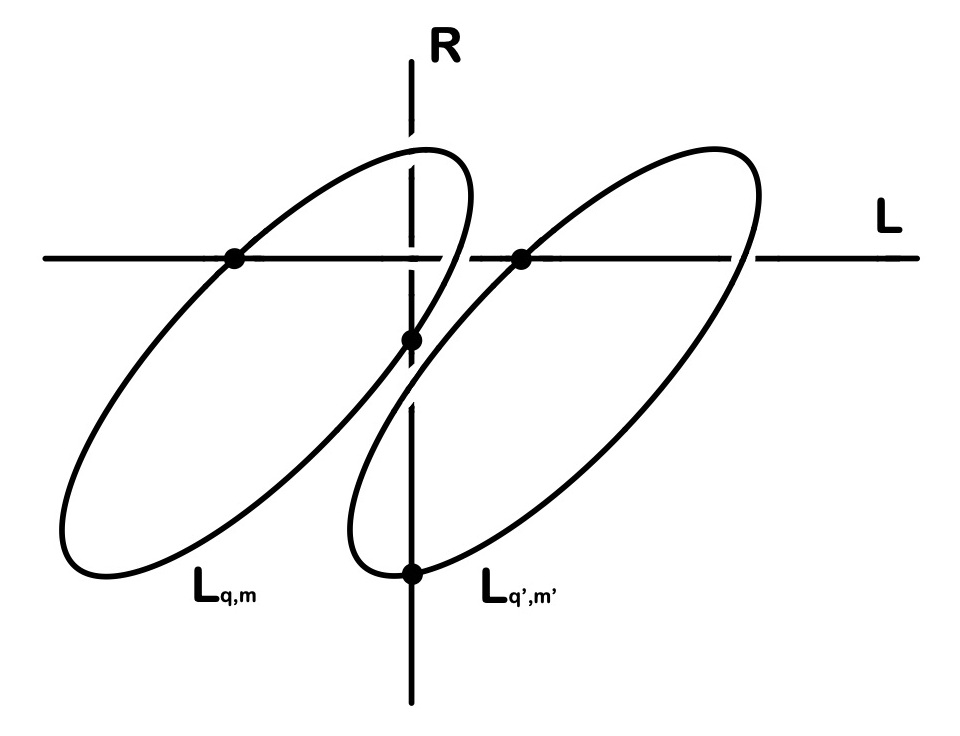}
\vspace{-5pt}
\caption{Any two smooth conics are {connected by}  a curve of bidegree $(1,0)$ and by a curve of $(0,1)$.}\label{fig1}
\end{figure}
We say that three or more disjoint smooth conics are {\it collinear} if they intersect the same $(1,0)$ curve $L$. 
\end{remark}

%
%

The fibers  of the twistor projection $\pi:\FF\to\PP^{2}$ (see~\cite[Section 5]{abbs}) form a subset $\Tt$ of the family of conics $\Cc$.  The twistor fibers are also characterized to
be the irreducible elements in $\Cc$ that are fixed by the anti-holomorphic involution $j:\FF\to\FF$ defined as
$$j(p,\ell)=(\bar\ell,\bar p).$$
Being the set of fixed points of $j$, a curve $L_{q,m}$ belongs to $\Tt$ if and only if $m=\bar q$. Furthermore, the set $\Tt$ is a Zariski dense in $\Cc$ (see, e.g.~\cite[Section 4]{abb}).
\begin{remark}\label{LjR}
If $L$ is the bidegree curve $(1,0)$ connecting two different twistor
fibers (see Remark~\ref{LR}), then the curve of bidegree $(0,1)$ connecting them is exactly
$R=j(L)$. {Thus, if three twistor fibers are collinear, then they intersect the same $(1,0)$ curve and the same $(0,1)$ curve.} 
\end{remark}

Recall from the introduction that, for any positive integer $n$, $\Cc(n)$ denotes the $4n$-dimensional set of $n$ pairwise disjoint elements of $\Cc$ and $\Tt(n)$ denotes the set of $n$ pairwise disjoint elements of $\Tt$. As before, $\Tt(n)$ is a Zariski dense in $\Cc(n)$ (see again~\cite[Section 4]{abb}). 

We now introduce the following crucial definition.
\begin{definition}\label{definitionC*}
For any $n\ge 1$ let $\Cc^*(n)$ be the set of all $A\in \Cc(n)$ such that
for any curve $L$ of bidegree $(1,0)$, it holds $\#(L\cap D) \le 2 $. Set $\Tt^*(n):= \Tt(n)\cap \Cc^*(n)$. 
\end{definition}
Obviously we have $\Cc^*(n)= \Cc(n)$ and $\Tt^*(n)= \Tt(n)$, for $n=1,2$.
{For $n\ge3$ the set $\Cc(n)\setminus \Cc^*(n)$ parameterizes unions of $n$ disjoint smooth conics such that at least three of them are collinear,}
hence $\Cc^{*}(n)$ is an open Zariski dense in $\Cc(n)$, as well as $\Tt^{*}(n)$ in $\Tt(n)$. 
Therefore for any $n\ge 1$, all of the following inclusions are Zariski dense: $\Tt^{*}(n)\subset\Tt(n)\subset\Cc(n)$,
$\Tt^{*}(n)\subset\Cc^{*}(n)\subset\Cc(n)$.

\subsection{Surfaces of bidegree $(1,0)$ and $(0,1)$}\label{sechirz}
Now we turn our attention back to surfaces. We recall from~\cite[Section 3.2]{abbs} and~\cite[Section 2]{abb} that $(1,0)$ and $(0,1)$-surfaces are
Hirzebruch surfaces of the first type. In particular, a surface $X$ of bidegree $(1,0)$ can be seen as the lift, via $\pi_{1}$, of a line (and analogously for a surface $Y$ of bidegree $(0,1)$). Using this description, it is easy to see that each of these surfaces represents the blow up of $\PP^{2}$ at one point.
Let $F_{1}$ be a Hirzebruch surface of type $1$; we now describe the relation between the generators of the Picard group of $F_{1}$ and the family of curves in $\FF$ described earlier. We recall that
$\mathrm{Pic}(F_1)=\ZZ h\oplus\ZZ f$, where 
$$h^2=-1,\qquad f^2=0,\qquad hf=1.$$
\begin{notation}
For the following analysis and the rest of the paper, $X$ will denote a surface of bidegree $(1,0)$, while $Y$ a surface of bidegree $(0,1)$.
\end{notation}

If we identify a surface $X$ with $F_{1}$, we get that  
$\Oo_X(1,0)\simeq\Oo_{F_1}(f)$, which again corresponds to the set of curves in $\FF$ of bidegree (0,1) contained in $X$. On the other hand, we have that $\Oo_X(0,1)\simeq \Oo_{F_1}(h+f)$, which corresponds to the elements of $\Cc$.

Thus, for any $a,b\in\ZZ$ and for any $\alpha,\beta\in\ZZ$, we obtain the following two relations
\begin{equation}\label{Hirz}
\Oo _X(a,b) \cong \Oo_{F_1}(bh+(a+b)f),\quad \mbox{ and }\quad  \Oo_{F_1}(\alpha h+\beta f)\cong \Oo _X(\beta-\alpha ,\alpha).\end{equation}

For a surface $Y$ of bidegree $(0,1)$ we can derive similar formul\ae:
\begin{equation}\label{coeff}\Oo _Y(a,b) \cong \Oo_{F_1}(ah+(a+b)f),\quad \mbox{ and }\quad
  \Oo_{F_1}(\alpha h+\beta f)\cong \Oo _Y(\alpha ,\beta-\alpha),\end{equation}
  
 for any $a,b\in\ZZ$ and  for any $\alpha,\beta\in\ZZ$.

\begin{remark}\label{m3}
Let $X$ be a surface of bidegree $(1,0)$. Then $X$ contains no element of $\Cc(2)$. In fact, every element of $\Cc$ in $X$ corresponds to an element of $\Oo_{F_{1}}(h+f)$ and any two elements of type $h+f$ meet.
The same holds for surfaces $Y$ of bidegree $(0,1)$.
{In particular, for every surface of bidegree $(1,0)$ or $(0,1)$ there is exactly one twistor fiber contained in it.}
\end{remark} 

We now recall from~\cite[Lemma 2.5]{abb} that, for any $a,b\ge0$, 
by using the exact sequence
$$0 \to \Oo_\FF(a-1,b)\to \Oo_\FF(a,b)\to \Oo_X(a,b)\to 0,$$
and its analog for $Y$, we have that
$$h^0(\Oo_X(a,b))=a(b+1)+
\binom{b+2}{2},\quad
h^0(\Oo_Y(a,b))=b(a+1)+
\binom{a+2}{2},$$
while
$$h^1(\Oo_X(a,b))=h^1(\Oo_Y(a,b))=0.$$ 
Furthermore, if $a>0,b>0$, the line bundles $\Oo_X(a,b)$ and $\Oo_Y(a,b)$ are very ample.

\subsection{Surfaces of bidegree $(0,d)$ and $(1,d)$}\label{0d1d}

We now move on to studying {higher} bidegree surfaces. 
We start with some considerations about bidegree $(0,d)$
surfaces.

\begin{remark}\label{m3.1}
As described in~\cite[Section 3.3]{abbs}, every irreducible and reduced surface $S$ of bidegree $(0,d)$ is equal to $\pi_{1}^{-1}(C)$ for some irreducible and reduced curve $C$ of degree $d$. Therefore, for $d\ge 2$, no irreducible and reduced $S\in|\Oo_{\FF}(0,d)|$ contains a smooth conic. Otherwise, thanks to Remark~\ref{remC}, $\pi_{i}(S)$ would contain a line, but $\pi_{1}(S)$ is an irreducible and reduced curve of degree $d$.
\end{remark}

For $n\ge2$ we now compute how many bidegree $(0,d)$-surfaces contain a fixed element of $\Cc(n)$. First we set up the following notation.
\begin{notation}
We will denote by $\Ii_{U,V}$ the ideal sheaf of a scheme $U$ contained in a projective variety $V$; whenever $V=\FF$ we will omit it. In particular, if $A\in\Cc(n)$ we will write $\Ii_{A}:=\Ii_{A,\FF}$.
\end{notation}

\begin{lemma}\label{c02}
Fix $d\ge 0$, $n\ge 1$, 
and $A\in \Cc(n)$. We have 
$$h^0(\Ii_{A}(0,d)) =\frac{(d-n+2)(d-n+1)}{2}$$ 
and
$$h^1(\Ii _{A}(0,d)) = \begin{cases}
\frac{n(n-1)}{2}&\mbox{ if }n\le d+1 \\
n(d+1)-\frac{(d+2)(d+1)}{2}&\mbox{ if }n\ge d+1. 
\end{cases}$$
\end{lemma}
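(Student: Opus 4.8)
The statement asks for the cohomology of $\Ii_A(0,d)$ where $A$ is a union of $n$ pairwise disjoint smooth conics. The natural approach is to use the geometric description of $(0,d)$-surfaces from Remark~\ref{m3.1}: every bidegree $(0,d)$ form is the pullback $\pi_1^*$ of a degree-$d$ form on $\PP^2$. More precisely, the plan is to reduce the computation from the flag $\FF$ down to the plane $\PP^2$ via the projection $\pi_1$.

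**The reduction to $\PP^2$.**
First I would observe that $\pi_{1*}\Oo_\FF(0,d) = \Oo_{\PP^2}(d)$, so that $H^0(\Oo_\FF(0,d)) \cong H^0(\Oo_{\PP^2}(d))$, and that sections of $\Ii_A(0,d)$ correspond to degree-$d$ plane curves whose pullback contains $A$. Each smooth conic $C = L_{q,m}$ has $\pi_1(C)$ equal to a line in $\PP^2$ (Remark~\ref{remC}); call it $\ell_C$. Since the map $\pi_1$ restricted to $C$ is degree $1$ onto this line, a pullback $\pi_1^{-1}(D)$ of a plane curve $D$ of degree $d$ contains $C$ if and only if $D$ contains the line $\ell_C$. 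Therefore $h^0(\Ii_A(0,d))$ equals the number of degree-$d$ plane curves containing the $n$ lines $\ell_{C_1},\dots,\ell_{C_n}$. When the conics are pairwise disjoint, I expect the corresponding lines to be pairwise distinct, so containing all $n$ lines forces a factor of degree $n$, leaving a free degree-$(d-n)$ plane curve. This gives
\[
h^0(\Ii_A(0,d)) = h^0(\Oo_{\PP^2}(d-n)) = \binom{d-n+2}{2} = \frac{(d-n+2)(d-n+1)}{2},
\]
valid for $d\ge n$ and zero otherwise, matching the claimed formula (which is naturally zero when $n=d+1$ and negative-looking only formally).

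**Extracting $h^1$ by Euler characteristic.**
For $h^1$, the cleanest route is to compute the Euler characteristic $\chi(\Ii_A(0,d))$ and subtract $h^0$, after checking that the higher cohomology $h^2, h^3$ vanish. From the ideal sheaf sequence $0\to\Ii_A(0,d)\to\Oo_\FF(0,d)\to\Oo_A(0,d)\to 0$, I would write
\[
\chi(\Ii_A(0,d)) = \chi(\Oo_\FF(0,d)) - \chi(\Oo_A(0,d)).
\]
Here $\chi(\Oo_\FF(0,d)) = h^0(\Oo_\FF(0,d)) = \frac{(d+1)(d+2)}{2}$ by \eqref{equation1}. Each conic $C$ is a smooth rational curve, and $\Oo_C(0,d)$ is a line bundle of degree equal to the intersection number $C\cdot\Oo_\FF(0,d) = d$ (the second bidegree component); thus $\chi(\Oo_C(0,d)) = d+1$, and since $A$ is a disjoint union, $\chi(\Oo_A(0,d)) = n(d+1)$. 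Combining,
\[
\chi(\Ii_A(0,d)) = \frac{(d+1)(d+2)}{2} - n(d+1).
\]
Assuming $h^{\ge 2}(\Ii_A(0,d)) = 0$ (which follows from the vanishing of $h^{\ge 1}(\Oo_\FF(0,d))$ together with $h^{\ge 1}(\Oo_A(0,d)) = 0$ in the sequence), we get $h^1 = h^0 - \chi$. In the range $n\le d+1$ this yields $\binom{d-n+2}{2} - \frac{(d+1)(d+2)}{2} + n(d+1) = \binom{n}{2}$, and in the range $n\ge d+1$ (where $h^0 = 0$) it yields $n(d+1) - \frac{(d+1)(d+2)}{2}$, exactly the two cases in the statement.

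**Main obstacle.**
The routine bookkeeping is the arithmetic simplification of $\binom{d-n+2}{2} - \frac{(d+1)(d+2)}{2} + n(d+1)$ to $\binom{n}{2}$. The genuine point requiring care is the claim that the $n$ lines $\ell_{C_i} = \pi_1(C_i)$ are distinct and, more sharply, that $n$ \emph{distinct} lines impose independent conditions on degree-$d$ plane curves, so that $h^0$ really drops by the full expected amount and the complement is a clean degree-$(d-n)$ system. Distinctness follows because two conics sharing the same $\pi_1$-image line would lie in a common $(0,1)$-surface $Y = \pi_1^{-1}(\ell)$, contradicting disjointness (Remark~\ref{m3}, which forbids two elements of $\Cc$ in a single $Y$). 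Independence of distinct lines as divisors on $\PP^2$ is standard, but I would state it explicitly. No genericity or the $\Cc^*$ condition is needed here, since disjointness alone forces distinct image lines.
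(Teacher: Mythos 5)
Your proposal is correct and follows essentially the same route as the paper: both reduce via $\pi_1$ to the ideal of $n$ distinct lines in $\PP^2$ (the paper writes $\Ii_A(0,d)=\pi_1^*(\Ii_{T,\PP^2}(d))$ with $T=\pi_1(A)$ a union of $n$ distinct lines), compute $h^0=\binom{d+2-n}{2}$, and then read off $h^1$ from the ideal-sheaf exact sequence using $h^1(\Oo_\FF(0,d))=0$. Your only deviations are cosmetic — you extract $h^1$ via the Euler characteristic (requiring the harmless extra check $h^{\ge 2}(\Ii_A(0,d))=0$) where the paper reads it directly from the long exact sequence, and you justify the distinctness of the image lines via Remark~\ref{m3}, which the paper asserts without comment.
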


\begin{proof}
Recall that $\Oo _{\FF}(0,d) = \pi _1^\ast(\Oo_{\PP^2}(d))$, and 
$\Ii _{A}(0,d) = \pi _1^\ast(\Ii_{T,\PP^2}(d))$
where $T=\pi_1(A)$ is a union of $n$ distinct lines in $\PP^2$.
In general, we have that:
$$h^0(\Oo_{\FF}(0,d)) =\binom{d+2}{2},\quad h^0(\Ii_{A}(0,d)) =\binom{d+2-n}{2},\quad h^0(\Oo_{A}(0,d)) =n(d+1),$$
hence, using the exact sequence
$$0\to \Ii _{A}(0,d)\to \Oo _{\FF}(0,d)\to \Oo _{A}(0,d)\to0,$$
since $h^1(\Oo_{\FF}(0,d)) =0$ and $\binom{d+2-n}{2}=0$ if $n\ge d+1$, we get the result.
\end{proof}

\begin{remark}\label{postc02}
As a direct consequence of the previous lemma, we can state that, for any 
$C\in\Cc$, there is only one surface $Y$ in $|\Ii_{C}(0,1)|$ and, analogously,
only one surface $X$ in $|\Ii_{C}(1,0)|$.
\end{remark}

\begin{remark}\label{remarkCHI}
If $A\in\Cc(n)$, we consider the following exact sequence:
$$0\to \Ii _{A}(a,b)\to \Oo _{\FF}(a,b)\to \Oo _{A}(a,b)\to0.$$
Since the conics in $A$ are all disjoint, for each $(a,b)\in\NN^2$ we have that
\begin{equation}
h^0(\Oo_{A}(a,b))=n(a+b+1),\label{h0O}\end{equation}
(see, e.g.~\cite[Sequence (7) and proof of Theorem 4.4]{abb}).
Recall from Formula~\eqref{equation1} that $h^0(\Oo_{\FF}(a,b)) =
\frac{(a+1)(b+1)(a+b+2)}{2}$. Therefore, as $h^{1}(\Oo _{A}(a,b))=h^1(\Oo_{\FF}(a,b))=0$, for any $A\in \Cc(n)$ and $d\ge0$, we have
\begin{equation}\label{p1} 
\chi(\Ii_A(1,d))=h^{0}(\Ii_A(1,d))-h^{1}(\Ii_A(1,d))
=(d+1)(d+3)-n(d+2).
\end{equation}
Hence, if $n=d+2$, we have $h^0(\Oo _A(1,d))=(d+2)^{2}$ and so
\begin{equation}\label{eq000}
h^0(\Oo_{\FF}(1,d)) =(d+1)(d+3) = (d+2)^2-1
= h^0(\Oo _A(1,d))-1
\end{equation}
and,  if $n=d+1$: 
$$h^0(\Oo_{\FF}(1,d)) =h^0(\Oo_A(1,d)) +d+1.$$ 
\end{remark}

In what follows, we are going to implicitly use the following simple observation.
\begin{remark}\label{m1}
Since the elements of $\Tt(1)$ are the fibers of the twistor map, each $p\in \FF$ is contained in a unique element, $C$, of $\Tt$. Since $j(C)=C$, $j(p)\in C$. {Analogously}, note that $p$ is contained in a unique curve of bidegree $(1,0)$ and a unique curve of bidegree $(0,1)$, $\pi _2^{-1}(\pi _2(p))$ and $\pi _1^{-1}(\pi _1(p))$. 
\end{remark}

The following remark will be used several times in the next pages.

\begin{remark}\label{m0000}
Fix a positive integer $d$ and an irreducible $S\in |\Oo_{\FF}(1,d)|$. Since $\pi _{1|S}$ is birational onto its image, $S$ is rational. By B\'ezout Theorem, for any $p\in \PP^2$,  the bidegree $(1,0)$ curve $\pi _2^{-1}(p)$ is either contained in $S$ or intersects $S$ in a single point (scheme-theoretically).
\end{remark}

We conclude this subsection with a technical result that will be used in the next pages.
In~\cite[Proposition 4.1]{abb} we proved that there exists at most one surface of bidegree $(a,b)$ containing a number equal to or greater than $a^{2}+ab+b^{2}$ of smooth conics. We will now generalize this result to a more general context.

\begin{proposition}\label{c0} 
Fix $(a,b,c,d)\in \NN^4$ so that  $(a,b)\ne (0,0)$, $c>0$, $d>0$. Take $A\in \Cc(n)$. Assume the existence of an irreducible and reduced $S\in |\Oo_{\FF}(a,b)|$ containing $A$
and assume one of the following conditions:
\begin{enumerate}
\item $ad+b(c+d)<n$;
\item $a(c+d)+bc< n$;
\item $ad+b(c+d) = a(c+d)+bc=n$.
\end{enumerate}
Then every element of $|\Ii_A(c,d)|$ contains $S$ and in particular $c\ge a$ and $d\ge b$.
\end{proposition}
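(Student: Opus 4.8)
The plan is to fix an arbitrary $W\in|\Ii_A(c,d)|$ and show $S\subseteq W$ by contradiction, the three hypotheses corresponding to three flavours of the same numerical obstruction. Suppose $S\not\subseteq W$. Since $S$ is irreducible and reduced and $W$ is an effective divisor on $\FF$ not containing $S$, the scheme-theoretic intersection $S\cap W$ is a pure one-dimensional cycle whose class equals $\Oo_\FF(a,b)\cdot\Oo_\FF(c,d)$, so by Lemma~\ref{uc1} it has bidegree $(ad+b(c+d),\,a(c+d)+bc)$. On the other hand $A\subseteq S$ and $A\subseteq W$ force $A\subseteq S\cap W$ scheme-theoretically, and as $A$ is reduced of pure dimension one this gives an inequality of effective cycles $[A]\le[S\cap W]$. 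Every smooth conic has bidegree $(1,1)$, so $A\in\Cc(n)$ has bidegree $(n,n)$, and comparing the two bidegrees componentwise yields $n\le ad+b(c+d)$ and $n\le a(c+d)+bc$. Hypotheses (1) and (2) directly contradict the first and the second of these inequalities, respectively, so in those cases $S\subseteq W$.

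The boundary hypothesis (3) is where I expect the real work to be, since there the two inequalities are forced to be equalities and the naive B\'ezout count no longer produces a contradiction. Here $[A]=[S\cap W]$, and because the difference $[S\cap W]-[A]$ is an effective one-cycle of bidegree $(0,0)$ while $\Oo_\FF(1,1)$ is ample, it must vanish: $S\cap W$ and $A$ have the same reduced support. The idea is to contradict this by a connectedness argument. First I would record that (3) is quite restrictive: rewriting the two equalities as $bd=ac$ and $ad+bc+bd=n$ and using $c,d\ge1$ and $(a,b)\neq(0,0)$, one checks that neither $a=0$ nor $b=0$ is possible, and hence that $a,b\ge1$ and $n\ge2$. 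Consequently both $\Oo_\FF(a,b)$ and $\Oo_\FF(c,d)$ are ample (each being $\Oo_\FF(1,1)$ tensored with a nef bundle pulled back from the two $\PP^2$ factors), so $S$ and $W$ are ample divisors on the smooth threefold $\FF$; being a complete intersection of two such divisors, $S\cap W$ is Cohen--Macaulay of positive dimension and connected. But its support is $A$, a disjoint union of $n\ge2$ smooth conics, which is disconnected --- a contradiction. Thus $S\subseteq W$ in case (3) as well. For this last step I would either invoke the standard connectedness theorem for intersections of ample divisors or, to stay self-contained, run the Koszul complex of $S\cap W$ in $\FF$ together with Serre duality and $\omega_\FF\cong\Oo_\FF(-2,-2)$ to obtain $h^1(\Ii_{S\cap W})=0$.

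Finally, having shown that every $W\in|\Ii_A(c,d)|$ contains $S$, the numerical conclusion is immediate: such a $W$ decomposes as $S$ plus a residual effective divisor of bidegree $(c-a,d-b)$, and since effective divisors on $\FF$ have nonnegative bidegree this gives $c\ge a$ and $d\ge b$.
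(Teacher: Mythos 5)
Your proof is correct and takes essentially the same route as the paper's: a contradiction built on the bidegree count of Lemma~\ref{uc1}, combined with the ampleness of $\Oo_{\FF}(c,d)$ and the irreducibility of $S$ to force connectedness of $S\cap W$, which is incompatible with the disconnected curve $A$. The only (organizational) difference is that the paper runs the connectedness argument in all three cases to produce a strict bidegree inequality, whereas you settle cases (1) and (2) by the cycle inequality alone, reserve connectedness for the boundary case (3), and additionally spell out the final deduction $c\ge a$, $d\ge b$, which the paper leaves implicit.
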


\begin{proof}
Assume by contradiction the existence of $S'\in |\Ii_A(c,d)|$ such that $S'\nsupseteq S$. We have 
\begin{align*}
\Oo_{\FF}(a,b)\cdot \Oo_{\FF}(c,d) =& ac \Oo_{\FF}(1,0)\cdot \Oo_{\FF}(1,0) + (ad+bc) \Oo_{\FF}(1,0)\cdot \Oo _{\FF}(0,1)\\
&+bd\Oo_{\FF}(0,1)\cdot \Oo_{\FF}(0,1).
\end{align*}

Since $S'\nsupseteq S$,  the intersection $S\cap S'$ is a curve of bidegree $(ad+b(c+d),a(c+d)+bc)$ (see Lemma~\ref{uc1}).
Since $c>0$ and $d>0$, then $\Oo_{\FF}(c,d)$ is ample. Moreover, since $S$ is irreducible, $A$ has bidegree $(n,n)$, and $A$ is not connected, then the intersection contains {some more components in addition to } $A$. So either $ad+bc+bd >n$ and $ac+ad+bc\ge n$ or $ad+bc+bd \ge n$ and $ac+ad+bc> n$. 
\end{proof}

%
%
%

\subsection{Non-collinear smooth conics}

We now want to characterize the conics in $\Cc^*(n)$ in terms of cohomology. We start by showing that the vanishing of certain cohomology groups implies that an element $A\in\Cc(n)$ lies in $\Cc^*(n)$.

\begin{lemma}\label{lemmino}
Fix $d\ge 0$, $3\le n\le d+1$ and  $A\in \Cc(n)$. If $h^1(\Ii _A(1,d)) = 0$ then $A\in \Cc^*(n)$.
\end{lemma}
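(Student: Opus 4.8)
The plan is to prove the contrapositive: assuming $A\notin\Cc^*(n)$, I will exhibit a nonzero class in $H^1(\Ii_A(1,d))$. By Definition~\ref{definitionC*} the failure of $A\in\Cc^*(n)$ means there is a curve $L$ of bidegree $(1,0)$ with $k:=\#(L\cap A)\ge 3$. Since each smooth conic meets $L$ in at most one reduced point (their intersection number is $1$) and the conics of $A$ are pairwise disjoint, the scheme $Z:=L\cap A$ consists of $k$ distinct reduced points lying on $k$ distinct components of $A$. The one numerical input I need at the outset is the degree of the restriction to $L$: writing $L$ as the cycle $\Oo_\FF(0,1)^2$ (Remark~\ref{rem1001}) and using the multiplication table~\eqref{tablechow} gives $L\cdot\Oo_\FF(1,d)=1$, so $\Oo_\FF(1,d)|_L\cong\Oo_{\PP^1}(1)$.

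The engine of the proof is the residual exact sequence obtained by restricting $\Ii_A$ to $L$,
\[ 0 \to \Ii_{A\cup L}(1,d) \to \Ii_A(1,d) \to \Ii_{Z,L}(1,d) \to 0, \]
whose quotient is the image of $\Ii_A$ in $\Oo_L$, i.e. the ideal sheaf of $Z$ in $L$. After twisting, $\Ii_{Z,L}(1,d)\cong\Oo_{\PP^1}(1-k)$, which has $h^0=0$ and $h^1=k-2\ge 1$. Passing to the long exact cohomology sequence, the cokernel of the map $H^1(\Ii_A(1,d))\to H^1(\Ii_{Z,L}(1,d))$ is the image of the connecting homomorphism, hence injects into $H^2(\Ii_{A\cup L}(1,d))$. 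So it suffices to prove $H^2(\Ii_{A\cup L}(1,d))=0$, since this forces $h^1(\Ii_A(1,d))\ge h^1(\Ii_{Z,L}(1,d))=k-2\ge 1$.

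This vanishing is the heart of the matter. From the structure sequence of $A\cup L$ in $\FF$ together with $h^1(\Oo_\FF(1,d))=h^2(\Oo_\FF(1,d))=0$ (recall~\eqref{equation1} and the vanishing already used for~\eqref{p1}), one gets $H^2(\Ii_{A\cup L}(1,d))\cong H^1(\Oo_{A\cup L}(1,d))$, so it remains to show $h^1(\Oo_{A\cup L}(1,d))=0$. For this I would use the Mayer--Vietoris sequence for $A\cup L$ glued along $Z$,
\[ 0 \to \Oo_{A\cup L}(1,d) \to \Oo_A(1,d)\oplus\Oo_L(1,d) \to \Oo_Z(1,d) \to 0. \]
Both $\Oo_A(1,d)$ (a disjoint union of $\Oo_{\PP^1}(d+1)$'s) and $\Oo_L(1,d)\cong\Oo_{\PP^1}(1)$ have vanishing $H^1$, so $h^1(\Oo_{A\cup L}(1,d))$ equals the cokernel of the evaluation map $H^0(\Oo_A(1,d))\oplus H^0(\Oo_L(1,d))\to H^0(\Oo_Z(1,d))=\CC^k$. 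The decisive point is that the $k$ points of $Z$ sit on $k$ distinct conic components of $A$: already the restriction $H^0(\Oo_A(1,d))=\bigoplus_i H^0(\Oo_{C_i}(1,d))\to\CC^k$ evaluates independently at the $k$ points and is therefore surjective, so the evaluation map is surjective and $h^1(\Oo_{A\cup L}(1,d))=0$.

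Combining these steps yields $h^1(\Ii_A(1,d))\ge k-2\ge 1>0$, contradicting the hypothesis $h^1(\Ii_A(1,d))=0$; hence $A\in\Cc^*(n)$. I expect the only delicate point to be the vanishing $H^2(\Ii_{A\cup L}(1,d))=0$: one must check that $Z$ is reduced and spread over distinct components (so the gluing map is genuinely surjective), and that the ambient vanishings $h^i(\Oo_\FF(1,d))=0$ for $i=1,2$ are available, which they are since $d\ge n-1\ge 2$ in the stated range.
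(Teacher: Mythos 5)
Your proof is correct, but it follows a genuinely different route from the paper's. The paper stays entirely at the level of global sections: if the restriction $H^0(\Oo_\FF(1,d))\to H^0(\Oo_A(1,d))$ were surjective, then composing with evaluation at the $k\ge 3$ points of $L\cap A$ (surjective because those points lie on pairwise disjoint components of $A$) would give a map of rank at least $3$ out of $H^0(\Oo_\FF(1,d))$; but that composite factors through $H^0(\Oo_L(1,d))\cong H^0(\Oo_{\PP^1}(1))$, which has dimension $2$, a contradiction. Hence restriction to $A$ is not surjective and $h^1(\Ii_A(1,d))>0$ follows from $h^1(\Oo_\FF(1,d))=0$ alone. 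You instead push the obstruction through the residual sequence $0\to\Ii_{A\cup L}(1,d)\to\Ii_A(1,d)\to\Ii_{Z,L}(1,d)\to 0$, where $Z:=L\cap A$, and reduce everything to the vanishing $h^2(\Ii_{A\cup L}(1,d))=0$, which you establish via Mayer--Vietoris on $A\cup L$. Both proofs turn on the same two numerical facts --- $\Oo_\FF(1,d)$ restricts to $\Oo_{\PP^1}(1)$ on $L$, and values at points lying on distinct disjoint conics are independent --- but the bookkeeping differs. The paper's version is more economical: it needs nothing beyond Formula~\eqref{equation1} and a rank count. Yours buys a quantitative refinement, $h^1(\Ii_A(1,d))\ge\#(L\cap A)-2$, very much in the spirit of Lemma~\ref{no2}, at the cost of invoking $h^2(\Oo_\FF(1,d))=0$, which Formula~\eqref{equation1} does not state (it only covers $h^0$ and $h^1$). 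That vanishing is nevertheless true (it follows from the sequence \eqref{eqpp1} and K\"unneth on $\PP^2\times\PP^2$, and it is implicitly used in Remark~\ref{remarkCHI} to write $\chi=h^0-h^1$), so this is a gap in citation rather than in mathematics, and your argument is complete.
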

\begin{proof}
Suppose, by contradiction, that there exists
a curve $L$ of bidegree $(1,0)$ such that $\#(L\cap A) \ge 3$ 
and consider the exact sequence that defines $\Ii_{A}$:
\begin{equation}\label{sequenza}0\to \Ii _{A}(1,d)\to \Oo _{\FF}(1,d)\to \Oo _{A}(1,d)\to0.
\end{equation}
We will prove that
the restriction map $H^0(\Oo_{\FF}(1,d))\to H^0(\Oo_A(1,d))$ is not surjective, which implies that
$h^1(\Ii _{A}(1,d)) >0$.
Let us assume that it is surjective. Then consider the following diagram
$$
\begindc{\commdiag}
\obj(50,25)[SA]{$\Oo_\FF(1,d)$}
\obj(50,10)[CP]{$\Oo_L(1,d)$}
\obj(75,25)[H]{$\Oo_{A}(1,d)$}
\obj(75,10)[T]{$\Oo_{L\cap A}(1,d)$}
\mor{SA}{CP}{ }[\atright,\solidarrow]
\mor{SA}{H}{ }[\atleft,\solidarrow]
\mor{CP}{T}{ }[\atright,\solidarrow]
\mor{H}{T}{ }[\atleft,\solidarrow]
\enddc\,.
$$
Since the vertical maps are surjective,
then the induced map $H^0(\Oo_{\FF}(1,d))\to H^0(\Oo_{A\cap L}(1,d))$ is also surjective and thus of rank at least 3 (since $L\cap A$ has cardinality at least $3$ and the irreducible components of $A$ are pairwise disjoint).
On the other hand, the restriction map $H^0(\Oo_{\FF}(1,d))\to H^0(\Oo_L(1,d))$ has rank $2$, and this leads to a contradiction.
\end{proof}

Before completing the characterization of conics in $\Cc^*(d+1)$, we will introduce a general construction that will be used in several subsequent discussions.
Let $A\in\Cc(n)$ and let $C$ be any connected component of $A$. Set $B:= A\setminus C$.
Then, for any $a,b\ge0$, if $Y\in|\Oo_{C}(0,1)|$, we have the following residual exact sequence 
$$
0\to\Ii_{Res_{Y}(A)}(a,b-1)\to\Ii_{A}(a,b)\to\Ii_{A\cap Y,Y}(a,b)\to 0,
$$
but since $Res_{Y}(A)=B$ and $A\cap Y=(B\cap Y)\cup C$, we have
\begin{equation}\label{successione}
0\to\Ii_{B}(a,b-1)\to\Ii_{A}(a,b)\to\Ii_{(B\cap Y)\cup C,Y}(a,b)\to 0.
\end{equation}

Obviously, an analogous sequence can be written for $X\in|\Oo_{C}(1,0)|$.

\begin{theorem}\label{c01}
Fix $d\ge 0$ and $A\in \Cc(d+1)$. We have $A\in \Cc^*(d+1)$ if and only if $h^1(\Ii _A(1,d)) = 0$.
\end{theorem}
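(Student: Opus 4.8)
The plan is to prove the two implications separately. The implication $h^1(\Ii_A(1,d))=0\Rightarrow A\in\Cc^*(d+1)$ is essentially already available: if $d\le 1$ then $d+1\le 2$, so $\Cc^*(d+1)=\Cc(d+1)$ and there is nothing to prove, while if $d\ge 2$ then $3\le d+1$ and Lemma~\ref{lemmino} applies with $n=d+1$ and gives the conclusion directly. So the whole effort goes into the converse, that $A\in\Cc^*(d+1)$ forces $h^1(\Ii_A(1,d))=0$.

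I would prove the converse by induction on $d$. The base case $d=0$ is immediate: here $A=C$ is a single conic, $h^0(\Ii_C(1,0))=1$ by Remark~\ref{postc02}, and $\chi(\Ii_C(1,0))=1$ by \eqref{p1}, so $h^1(\Ii_C(1,0))=0$. For the inductive step, fix a connected component $C$ of $A$, set $B:=A\setminus C\in\Cc(d)$, and let $Y$ be the unique surface of $|\Ii_C(0,1)|$ (Remark~\ref{postc02}). The residual sequence \eqref{successione} with $(a,b)=(1,d)$ is
\[
0\to \Ii_B(1,d-1)\to \Ii_A(1,d)\to \Ii_{(B\cap Y)\cup C,Y}(1,d)\to 0.
\]
Since $B$ is a subfamily of $A$ it also satisfies the non-collinearity condition, hence $B\in\Cc^*(d)$ and the inductive hypothesis yields $h^1(\Ii_B(1,d-1))=0$. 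Thus it suffices to show $h^1(\Ii_{(B\cap Y)\cup C,Y}(1,d))=0$, after which the long exact sequence forces $h^1(\Ii_A(1,d))=0$.

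I would compute this last group on $Y\cong F_1$. By \eqref{coeff} we have $\Oo_Y(1,d)\cong\Oo_{F_1}(h+(d+1)f)$, and the conic $C$ has class $h+f$ (again \eqref{coeff}), so $\Ii_{C,Y}(1,d)\cong\Oo_{F_1}(df)=\rho^*\Oo_{\PP^1}(d)$ is pulled back along the ruling $\rho\colon F_1\to\PP^1$; in particular $h^0=d+1$ and $h^1=0$. A Chow-ring computation via \eqref{tablechow} shows that each conic $C_i$ of $B$ meets $Y$ in a single point, and since the conics of $A$ are pairwise disjoint the resulting scheme $Z:=B\cap Y$ consists of $d$ distinct points, all disjoint from $C$; hence $\Ii_{(B\cap Y)\cup C,Y}(1,d)\cong\Ii_{Z,F_1}(df)$. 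Because $\Oo_{F_1}(df)$ is pulled back from the base of the ruling, the $d$ points of $Z$ impose independent conditions on $|\Oo_{F_1}(df)|$ precisely when they lie on $d$ distinct fibers $f$, and in that case $h^1(\Ii_{Z,F_1}(df))=0$.

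The crux, and the step I expect to be the real obstacle, is to show that $A\in\Cc^*(d+1)$ forces the points of $Z$ onto distinct fibers. The fiber of $\rho$ through $z_i:=C_i\cap Y$ is the unique bidegree $(1,0)$ curve $L_i$ of $\FF$ through $z_i$ (combining Remark~\ref{m1} with the identification of $f$ with bidegree $(1,0)$ curves from \eqref{coeff}), and since $f\cdot(h+f)=1$ this curve meets $C$ in exactly one point. If two points $z_i,z_j$ shared a fiber $L$, then $L$ would be a single bidegree $(1,0)$ curve meeting $C$, $C_i$ and $C_j$; these three intersection points are distinct because the three conics are pairwise disjoint, so $\#(L\cap A)\ge 3$, contradicting $A\in\Cc^*(d+1)$. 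Therefore the $d$ fibers are distinct, $h^1(\Ii_{Z,F_1}(df))=0$, and the induction closes.
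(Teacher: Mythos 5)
Your proof is correct and takes essentially the same route as the paper's: the forward implication via Lemma~\ref{lemmino} (with the trivial cases $d\le 1$ noted separately), and the converse by induction on $d$ using the residual exact sequence \eqref{successione} with respect to the unique $(0,1)$-surface $Y$ through a chosen component, reducing everything to $h^1(\Ii_{B\cap Y,F_1}(df))=0$. Your final step --- that two points of $B\cap Y$ on a common ruling fiber would yield a bidegree $(1,0)$ curve meeting $C$ and two components of $B$, contradicting $A\in\Cc^*(d+1)$ --- is exactly the paper's key observation, merely phrased via independent conditions along the ruling rather than via $h^0(\Ii_{B\cap Y,F_1}(df))\ge 2$.
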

\begin{proof}
Thanks to the previous lemma we only need to prove that $A\in \Cc^*(d+1)$ satisfies $h^1(\Ii _A(1,d)) = 0$. We use induction on $d\ge 0$.
The case $d=0$ is true by Lemma \ref{c02}. So we can assume $d>0$ and use induction on $d$. Let $C$ be a connected component of $A$, set $B:= A\setminus C$ and call $Y$ the unique element of $|\Ii_C(0,1)|$. Consider the residual exact sequence \eqref{successione}, with $a=1$ and $b=d$.
%
Since $A\in \Cc(d+1)$, $C\cap B =\emptyset$ and $B\cap Y$ is a set of $d$ different points, up to the identification of $D$ with $F_1$ we have $$\Ii_{(B\cap Y)\cup C,Y}(1,d) \cong \Ii_{(B\cap Y)\cup C,F_1}(1,d) (h+(d+1)f)\cong \Ii_{B\cap Y,F_1}(df)\,.$$ 

Using \eqref{successione} and induction, we are left to prove that $h^1(\Ii_{B\cap Y,F_1}(df))=0$ if and only if $A\in \Cc^*(d+1)$. 

Consider now the following exact sequence
\begin{equation}
0 \to \Ii_{B\cap Y,F_1}(df)\to \Oo_{F_1}(df)\to \Oo_{B\cap Y}(df) \to 0.
\end{equation}

Since $h^0(\Oo_{F_1}(df))=d+1$ and $h^0(\Oo_{B\cap Y}(df))=d$,  we have $h^1(\Ii_{B\cap Y,F_1}(df))>0$ if and only if 
$h^0(\Ii_{B\cap Y,F_1}(df))\ge2$.
The last inequality means that there are at least two different sets of $d$ fibers containing the set of $d$ points $B\cap Y$. This is equivalent to the fact that there exists a fiber $L\in |f|$ such that $\#(B\cap L)\ge 2$. 
 Since $L$ is a curve of bidegree $(1,0)$ in $\FF$, then 
 $L\cdot Y =0$ in the intersection ring of $\FF$, so $L\subset Y$, and we get $L\cap C\ne \emptyset$. 
 Thus $\#(L\cap A)\ge 3$, which means that $A\not\in \Cc^*(d+1)$. 
\end{proof}

\begin{corollary}\label{cor1}
Fix $d\ge 0$, $0\le n\le d+1$ and  $A\in \Cc^*(n)$. 
Then $h^1(\Ii _A(1,d)) = 0$. In particular, for $n=0,1,2$ {and for any $A\in \Cc(n)$,} we have 
$$h^1(\Ii _A(1,1)) = 0 \quad \text{ and } \quad
h^{0}(\Ii _A(1,1))=8-3n.$$
\end{corollary}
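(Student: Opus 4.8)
The plan is to deduce Corollary~\ref{cor1} from Theorem~\ref{c01} and Lemma~\ref{lemmino}, treating the generic range $3\le n\le d+1$ and the small cases $n=0,1,2$ separately. For the main range, the key observation is monotonicity: if $A\in\Cc^*(n)$ with $n\le d+1$, I want to produce an auxiliary configuration $A'\in\Cc^*(d+1)$ containing $A$, so that Theorem~\ref{c01} applies to $A'$ and the vanishing $h^1(\Ii_{A'}(1,d))=0$ descends to $h^1(\Ii_A(1,d))=0$. Concretely, I would add $d+1-n$ further disjoint smooth conics, chosen generically so that the enlarged collection still has no three collinear (this is possible because $\Cc^*(d+1)$ is open and Zariski dense in $\Cc(d+1)$, as established after Definition~\ref{definitionC*}). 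The descent of the vanishing along the inclusion $A\subset A'$ is handled by the exact sequence
$$
0\to \Ii_{A'}(1,d)\to \Ii_A(1,d)\to \Ii_{A\cap(A'\setminus A),\,A'\setminus A}(1,d)\to 0
$$
or, more cleanly, by the residual/restriction sequence \eqref{successione} applied repeatedly to strip off one added conic $C$ at a time: the quotient term $\Ii_{(B\cap Y)\cup C,Y}(1,d)\cong\Ii_{B\cap Y,F_1}(df)$ has vanishing $h^1$ exactly when the stripped configuration stays in $\Cc^*$, which is guaranteed by the generic choice.

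Alternatively, and perhaps more transparently, I would run the same induction on $d$ used in the proof of Theorem~\ref{c01}, but now starting from $A\in\Cc^*(n)$ with $n\le d+1$ rather than $n=d+1$. Peeling off one connected component $C$ via \eqref{successione} reduces $h^1(\Ii_A(1,d))$ to $h^1(\Ii_B(1,d-1))$ together with $h^1(\Ii_{B\cap Y,F_1}(df))$; the former vanishes by the inductive hypothesis (since $B\in\Cc^*(n-1)$ and $n-1\le d$), and the latter vanishes because $\#(B\cap Y)=n-1\le d = h^0(\Oo_{F_1}(df))-1$, so $B\cap Y$ imposes independent conditions on $|df|$ as soon as no fiber meets $B$ twice — which again follows from $A\in\Cc^*(n)$. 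The base case $d=0$ forces $n\le 1$ and is covered by Lemma~\ref{c02}. I expect the main subtlety to be the bookkeeping when $n<d+1$: here $B\cap Y$ is a proper subset of a fiber count, so the condition $h^0(\Ii_{B\cap Y,F_1}(df))$ is strictly larger than in the borderline case, and I must verify that the correct statement is merely the vanishing of $h^1$ (independent conditions), not the equality of dimensions.

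For the explicit small cases $n=0,1,2$ with bidegree $(1,1)$, I would simply specialize $d=1$, so that $h^1(\Ii_A(1,1))=0$ for all $A\in\Cc(n)$ with $n\le 2$ follows from the above (here $\Cc^*(n)=\Cc(n)$ for $n\le 2$, by the remark after Definition~\ref{definitionC*}, so no genericity is needed). The stated value of $h^0$ then comes directly from the Euler characteristic computation in Remark~\ref{remarkCHI}: specializing \eqref{p1} to $d=1$ gives $\chi(\Ii_A(1,1))=(2)(4)-3n=8-3n$, and since $h^1(\Ii_A(1,1))=0$ we conclude $h^0(\Ii_A(1,1))=8-3n$. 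The only point to check is that $8-3n\ge 0$ for $n\le 2$, which it is, so the formula is consistent. The hardest part of the whole argument is confirming that the genericity used to embed $A$ into a larger non-collinear configuration (or equivalently, to keep each residual $B\cap Y$ in general position on $F_1$) is actually available; this rests on the openness and Zariski density of $\Cc^*$ inside $\Cc$, which is already recorded in the text, so the obstacle is conceptual bookkeeping rather than a genuinely new difficulty.
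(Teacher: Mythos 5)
Your proposal is correct, but it reaches the statement by a different route than the paper: the paper's own proof of the first part is a one-line citation to an external result (\cite[Remark 4.3]{abb}), with Formula~\eqref{p1} giving the dimension count, whereas you derive the $h^1$-vanishing internally from Theorem~\ref{c01}. Of your two arguments, the inductive one (rerunning the proof of Theorem~\ref{c01} with $n\le d+1$ components, peeling off a conic via \eqref{successione} and noting that the $n-1\le d$ points of $B\cap Y$ lie on distinct fibers of the ruling of $F_1$, hence impose independent conditions on $|df|$) is complete as stated and makes the corollary genuinely self-contained in the present paper. The monotonicity argument is also sound in its cohomological core: writing $E:=A'\setminus A$, disjointness gives that the quotient in your sequence is $\Oo_E(1,d)$, whose $h^1$ vanishes ($E$ is a disjoint union of smooth rational curves on which the bundle has degree $d+1$), so $H^1(\Ii_A(1,d))$ injects into $H^1(\Oo_E(1,d))=0$ once $h^1(\Ii_{A'}(1,d))=0$ by Theorem~\ref{c01}. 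Its one soft spot is the existence of the extension $A\subset A'\in\Cc^*(d+1)$: openness and density of $\Cc^*(d+1)$ in $\Cc(d+1)$ do not by themselves guarantee that the locus of configurations containing the \emph{fixed} $A$ meets $\Cc^*(d+1)$, since a priori that locus could sit inside the closed complement; you should add the (easy) argument that the extra conics can be chosen one at a time in the complement of finitely many proper closed conditions in the irreducible variety $\Cc$ (disjointness from the components already present, and avoidance of the finitely many bidegree $(1,0)$ curves joining pairs of them). Your treatment of the cases $n=0,1,2$ with $d=1$, using $\Cc^*(n)=\Cc(n)$ and $\chi(\Ii_A(1,1))=8-3n$ from \eqref{p1}, coincides with the paper's.
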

\begin{proof}
{By using \cite[Remark 4.3]{abb}, we easily get the first part of the statement.
The second part follows from Formula~\eqref{p1}.}
\end{proof}

We note that since $\Tt(n)$ is a Zariski dense of $\Cc(n)$, then the characterization given by Theorem \ref{c01} also holds for the set $\Tt^*(n)$.

\begin{lemma}\label{ii1}
Fix an integer $d\ge 0$ and $A\in \Cc^*(d+2)$. Then $h^1(\Ii_A(1,d))\le 1$.
\end{lemma}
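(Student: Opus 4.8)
The plan is to induct on $d$, mirroring the residual-sequence argument in the proof of Theorem~\ref{c01}. It is convenient to record first that, for $n=d+2$, formula~\eqref{p1} gives $\chi(\Ii_A(1,d))=(d+1)(d+3)-(d+2)^2=-1$, so that $h^1(\Ii_A(1,d))=h^0(\Ii_A(1,d))+1$; thus the claim $h^1\le 1$ is equivalent to $h^0(\Ii_A(1,d))=0$, i.e.\ to the absence of a $(1,d)$-surface through $A$. For the base case $d=0$, the set $A\in\Cc^*(2)$ consists of two disjoint smooth conics, and by Remark~\ref{m3} no surface of bidegree $(1,0)$ can contain two disjoint conics; hence $h^0(\Ii_A(1,0))=0$ and $h^1(\Ii_A(1,0))=1$.

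For the inductive step assume $d\ge 1$ and fix a connected component $C$ of $A$. Set $B:=A\setminus C$, so that $B\in\Cc^*(d+1)=\Cc^*((d-1)+2)$, and let $Y$ be the unique surface of $|\Ii_C(0,1)|$. Each of the $d+1$ conics of $B$ is disjoint from $C$ and, by Remark~\ref{m3}, is not contained in $Y$; being of bidegree $(1,1)$ it therefore meets $Y$ in exactly one point, and these points are distinct because the conics of $B$ are pairwise disjoint. Hence $Z:=B\cap Y$ is a reduced set of $d+1$ points. Writing the residual sequence~\eqref{successione} with $a=1$, $b=d$ and identifying $Y$ with $F_1$ exactly as in the proof of Theorem~\ref{c01}, we obtain
$$0\to \Ii_B(1,d-1)\to \Ii_A(1,d)\to \Ii_{Z,F_1}(df)\to 0,$$
where $\Oo_{F_1}(df)$ is pulled back from the base $\PP^1$ of the ruling and satisfies $h^0(\Oo_{F_1}(df))=d+1$.

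The heart of the argument is to show that $Z$ lies on $d+1$ distinct fibers of $F_1$. By~\eqref{coeff} the fiber class $f=\Oo_Y(0,1)$ corresponds to curves of bidegree $(1,0)$ in $\FF$, and every fiber $L\in|f|$ satisfies $L\cdot C=f\cdot(h+f)=1$ in $Y\cong F_1$, so $L$ meets $C$. Were two points of $Z$ to lie on a common fiber $L$, then $L$ would be a $(1,0)$-curve meeting two conics of $B$ as well as $C$, so three conics of $A$ would be collinear, contradicting $A\in\Cc^*(d+2)$. Therefore the $d+1$ points of $Z$ lie on $d+1$ distinct fibers, and since the sections of $\Oo_{F_1}(df)$ are the pullbacks of $H^0(\Oo_{\PP^1}(d))$, the restriction $H^0(\Oo_{F_1}(df))\to H^0(\Oo_Z(df))$ is evaluation of a degree-$d$ form at $d+1$ distinct points of $\PP^1$, hence an isomorphism of $(d+1)$-dimensional spaces. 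Consequently $h^1(\Ii_{Z,F_1}(df))=0$, and the long exact sequence of the displayed residual sequence shows that $H^1(\Ii_B(1,d-1))\to H^1(\Ii_A(1,d))$ is surjective; by the inductive hypothesis applied to $B$ this yields $h^1(\Ii_A(1,d))\le h^1(\Ii_B(1,d-1))\le 1$.

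The step I expect to be the main obstacle is precisely this geometric translation: recognising that the only way the $d+1$ residual points $Z$ can fail to impose independent conditions on $|\Oo_{F_1}(df)|$ is that two of them share a fiber, and that this coincides with three conics of $A$ being collinear. This is exactly where the hypothesis $A\in\Cc^*(d+2)$ (rather than merely $A\in\Cc(d+2)$) enters, and it is what keeps the $F_1$-term from contributing to $H^1$ and pins the bound at $1$.
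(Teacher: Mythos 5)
Your proof is correct and follows essentially the same route as the paper's: induction on $d$, the residual exact sequence~\eqref{successione} with respect to the unique $(0,1)$-surface $Y$ through a component $C$, identification of the quotient term with $\Ii_{B\cap Y,F_1}(df)$, and the observation that two residual points on a common fiber of $F_1$ would produce a $(1,0)$-curve meeting three components of $A$, contradicting $A\in\Cc^*(d+2)$. The only cosmetic differences (phrasing the vanishing as injectivity of evaluation at $d+1$ distinct points of $\PP^1$ rather than via $h^0=h^0$ counting, and the opening remark that $\chi=-1$ makes the bound equivalent to $h^0(\Ii_A(1,d))=0$) do not change the argument.
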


\begin{proof}
The lemma is true for $d=0$, because $h^0(\Ii_A(1,0)) =0$ (see Remark~\ref{m3}). 
We assume $d>0$ and use induction on $d$. Let $C$ be a connected component of $A$, set $B:= A\setminus C$ and call $Y$ the unique element of $|\Ii_C(0,1)|$. Consider the residual exact sequence \eqref{successione}, with $a=1$ and $b=d$.
Since $A\in \Cc(d+2)$, $C\cap B =\emptyset$ and $B\cap Y$ is formed by $d+1$ different points, up to the identification of $D$ with $F_1$ we have $$\Ii_{(B\cap Y)\cup C,Y}(1,d) \cong \Ii_{(B\cap Y)\cup C,F_1}(1,d) (h+(d+1)f)\cong \Ii_{B\cap Y,F_1}(df)\,.$$ 

Using \eqref{successione} and induction, we are left to prove that $h^1(\Ii_{B\cap Y,F_1}(df))=0$ if $A\in \Cc^*(d+2)$. 

Consider now the following exact sequence
\begin{equation}
0 \to \Ii_{B\cap Y,F_1}(df)\to \Oo_{F_1}(df)\to \Oo_{B\cap Y}(df) \to 0\,.
\end{equation}

Since $h^0(\Oo_{F_1}(df))=d+1$ and $h^0(\Oo_{B\cap Y}(df))=d+1$,  we have $h^1(\Ii_{B\cap Y,F_1})>0$ if and only if 
$h^0(\Ii_{B\cap Y,F_1}(df))>0$. This is equivalent to the fact that there exists a fiber $L\in |f|$ such that $\#(B\cap L)\ge 2$. 
 Since $L$ is a curve of bidegree $(1,0)$ in $\FF$, then 
 $L\cdot Y =0$ in the intersection ring of $\FF$, therefore $L\subset Y$, and so we get $L\cap C\ne \emptyset$. 
 Thus $\#(L\cap A)\ge 3$, which means that $A\not\in \Cc^*(d+2)$. 
\end{proof}

As said in Remark~\ref{LR}, for any element $A\in\Cc(2)$ there exists a unique
curve $L$ of bidegree $(1,0)$ and a unique curve $R$ of bidegree $(0,1)$
such that both intersect the elements of $A$ {at a point}. As described in the following result, it turns out that {$A \cup L\cup R$} is the base locus
of $|\Ii_A(1,1)|$.

%
%

\begin{proposition}\label{ee1} 
For any $A\in \Cc(2)$, we have that 
\begin{enumerate}
\item the general element in $|\Ii_A(1,1)|$ is irreducible;
\item the base locus $\Bb$ of $|\Ii_A(1,1)|$ is $A\cup L\cup R$, where $L$  and  $R$ are the curves described in Remark~\ref{LR}.
\end{enumerate}
\end{proposition}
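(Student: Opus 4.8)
The plan is to use that $|\Ii_A(1,1)|$ is a pencil and to produce its two reducible members explicitly. Write $A=C_1\cup C_2$ with $C_1,C_2\in\Cc$ disjoint, say $C_i=L_{q_i,m_i}$. By Corollary~\ref{cor1} we have $h^0(\Ii_A(1,1))=8-3\cdot 2=2$, so $|\Ii_A(1,1)|$ is a pencil; in particular its base locus is the scheme-theoretic intersection of any two distinct members. By Remark~\ref{postc02} there is a unique bidegree $(1,0)$ surface $X_i$ and a unique bidegree $(0,1)$ surface $Y_i$ containing $C_i$. I would set $D_1:=X_1\cup Y_2$ and $D_2:=X_2\cup Y_1$: both have bidegree $(1,1)$ and contain $A$, hence belong to the pencil, and they are distinct since, by Remark~\ref{m3}, no bidegree $(1,0)$ surface contains two disjoint conics, forcing $X_1\neq X_2$.

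For part (1) I would first note that a bidegree $(1,1)$ surface can fail to be irreducible and reduced only by splitting as $X\cup Y$ with $X$ of bidegree $(1,0)$ and $Y$ of bidegree $(0,1)$, since $(1,1)$ admits no other decomposition into effective bidegrees and no surface can occur in it with multiplicity. If such a union contains the disjoint conics $C_1,C_2$, then each $C_i$, being irreducible, lies in $X$ or in $Y$; since neither $X$ nor $Y$ can contain both (Remark~\ref{m3}), one conic lies in $X$ and the other in $Y$, and the uniqueness of Remark~\ref{postc02} forces the union to be exactly $D_1$ or $D_2$. Thus $D_1$ and $D_2$ are the only non-irreducible members of the pencil, so the general member is irreducible.

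For part (2), since the base locus $\Bb$ equals $D_1\cap D_2$, I would compute this intersection by distributing: $\Bb=(X_1\cap X_2)\cup(X_1\cap Y_1)\cup(X_2\cap Y_2)\cup(Y_1\cap Y_2)$. Using the defining equations $X_i=\{(p,\ell)\in\FF\,|\,pm_i=0\}$ and $Y_i=\{(p,\ell)\in\FF\,|\,q_i\ell=0\}$, each piece is immediate: $X_1\cap Y_1=C_1$ and $X_2\cap Y_2=C_2$, while $X_1\cap X_2=\pi_1^{-1}(m_1\times m_2)$ and $Y_1\cap Y_2=\pi_2^{-1}(q_1\times q_2)$. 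By Remark~\ref{LR} these last two curves are exactly $R$ and $L$, so $\Bb=A\cup L\cup R$.

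The main point to secure is the middle step, namely that $D_1$ and $D_2$ really are the only reducible members (which yields irreducibility of the general element) and that $D_1\cap D_2$ collapses to precisely $A\cup L\cup R$ and nothing larger. Both hinge on the uniqueness statements in Remark~\ref{postc02} together with the fact, recalled in Remark~\ref{m3}, that a bidegree $(1,0)$ or $(0,1)$ surface contains no two disjoint conics; once these are invoked, the explicit equations for the $X_i$ and $Y_i$ reduce the intersection calculation to elementary linear algebra, since each pairwise intersection is cut out by two linear conditions.
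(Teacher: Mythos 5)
Your proof is correct and follows essentially the same route as the paper: both arguments identify $X_1\cup Y_2$ and $X_2\cup Y_1$ as the only reducible members of the pencil $|\Ii_A(1,1)|$ (using Corollary~\ref{cor1}, Remark~\ref{m3} and Remark~\ref{postc02}) and obtain the base locus from their intersection $A\cup L\cup R$. The only difference is one of bookkeeping: the paper proves $L\cup R\subseteq \Bb$ separately via B\'ezout and Remark~\ref{m0000} and then uses the intersection of the two reducible members for the reverse inclusion, whereas you invoke the pencil property (the base locus equals the intersection of any two distinct members) and compute $D_1\cap D_2$ directly in coordinates.
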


\begin{proof}
Since $A\in\Cc(2)$, by Corollary~\ref{cor1} we have $h^1(\Ii_A(1,1)) =0$ and $h^0(\Ii_A(1,1)) =2$. 
Let $C_{1}$ and $C_{2}$ be the connected components of $A$. Denote by $X_{i}$ the only element of $|\Oo_{\FF}(1,0)|$ containing $C_{i}$ and by $Y_{i}$ the only element of $|\Oo_{\FF}(0,1)|$ containing $C_{i}$. 
The surfaces $X_{1}\cup Y_{2}$ and $X_{2}\cup Y_{1}$ are the only reducible elements of $|\Ii_A(1,1)|$ and hence, the general element in $|\Ii_A(1,1)|$ is irreducible and \textit{(1)} is proved. 

To prove \textit{(2)} we analyze the base locus $\Bb$ of $|\Ii_A(1,1)|$. If $S,S'\in |\Ii_A(1,1)|$ are irreducible and $S\ne S'$, then the one-dimensional cycle $S\cap S'$ has bidegree $(3,3)$ and it contains $A$, which has bidegree $(2,2)$. Let $R:= X_{1}\cap X_{2}$ and $L:= Y_{1}\cap Y_{2}$, where $X_{i}$ and $Y_{i}$ are the surfaces defined in the first part of this proof. The curves $L$ and $R$ are exactly those given in Remark~\ref{LR}. In particular,
$\#(L\cap A)=\#(R\cap A)=2$ and hence, by B\'ezout and Remark~\ref{m0000}, $L\cup R\subset\Bb$. Furthermore, {recall that the reducible surfaces, $X_{1}\cup Y_{2}$ and $X_{2}\cup Y_{1}$ belong to $|\Ii_A(1,1)|$ and their intersection $(X_{1}\cup Y_{2})\cap(X_{2}\cup Y_{1})$
is $A\cup L\cup R$. So the base locus of $|\Ii_A(1,1)|$ is exactly $\Bb=A\cup L\cup R$.}
\end{proof}

\begin{remark}
By generalizing the proof of Proposition~\ref{ee1}, we can say something about the base locus 
of $\Ii_A(1,d)$, for $A\in \Cc(n)$.
Fix the integers $d>0$ and $n\ge 2$ and take any $A\in \Cc(n)$. Then, the base locus $\Bb$ of $\Ii_A(1,d)$ contains all curves $L$ of bidegree $(1,0)$ such that $\#(L\cap A)\ge 2$. If $A\in \Cc^*(n)$, then there are exactly $\binom{n}{2}$ such curves $L$ (the number of lines connecting two points in a set of $n$ general points).
If $A\in \Tt(n)$, then $\#(j(L)\cap A)\ge 2$ for all  $L$ {such that $\#(L\cap A)\ge 2$}, since $j(A)=A$. 
\end{remark}

\section{Surfaces of bidegree $(1,d)$}\label{Section1d}

In this section we prove Theorems~\ref{u6} and~\ref{new-thm}. In particular, we give some results for the case of a surface of bidegree $(1,d)$. Later,
in the following section, we will specialize to the cases $d=2$ and $d=3$. The case $d=1$ has been studied in great detail in~\cite{abbs}, and here we add a simple lemma useful for what follows.

\begin{lemma}\label{primo-caso}
For any $A\in \Tt^*(3)$, there is no irreducible surfaces of bidegree $(1,1)$ containing $A$. 
\end{lemma}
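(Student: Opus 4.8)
The plan is to prove the stronger statement that $h^0(\Ii_A(1,1)) = 0$, so that $A$ lies on no surface of bidegree $(1,1)$ at all, which a fortiori rules out an irreducible one. The crucial observation is that $\Tt^*(3) = \Tt(3) \cap \Cc^*(3) \subset \Cc^*(3)$, so that $A$ satisfies the hypothesis $A \in \Cc^*(d+2)$ of Lemma~\ref{ii1} in the case $d = 1$.

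First I would apply Lemma~\ref{ii1} with $d = 1$ to obtain the upper bound $h^1(\Ii_A(1,1)) \le 1$. Then I would compute the Euler characteristic of $\Ii_A(1,1)$ directly from Formula~\eqref{p1}: taking $d = 1$ and $n = 3$ gives $\chi(\Ii_A(1,1)) = (d+1)(d+3) - n(d+2) = 8 - 9 = -1$. Since $\chi(\Ii_A(1,1)) = h^0(\Ii_A(1,1)) - h^1(\Ii_A(1,1))$, these two facts combine to give $h^0(\Ii_A(1,1)) = h^1(\Ii_A(1,1)) - 1 \le 0$, and hence $h^0(\Ii_A(1,1)) = 0$ (forcing $h^1(\Ii_A(1,1)) = 1$ along the way).

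Consequently the linear system $|\Ii_A(1,1)|$ is empty, so no surface of bidegree $(1,1)$ contains $A$; in particular no irreducible one does, which is the assertion of the lemma. I do not expect any genuine obstacle here, since all the content is already encapsulated in Lemma~\ref{ii1}: the role of the non-collinearity condition defining $\Cc^*(3)$ is precisely to guarantee the vanishing $h^1(\Ii_{B\cap Y,F_1}(df)) = 0$ that drives that lemma, and it is exactly this hypothesis that makes the conclusion fail for three \emph{collinear} twistor fibers (compare the extremal existence result announced for $n = d+2$ in the introduction).
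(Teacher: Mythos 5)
Your proof is correct, but it follows a genuinely different route from the paper. The paper argues by contradiction using twistor-geometric machinery imported from its predecessor: an irreducible bidegree $(1,1)$ surface $M\supset A$ must be $j$-invariant, a smooth $j$-invariant $(1,1)$ surface through three twistor fibers carries a whole circle of twistor fibers, and an explicit normal form $p_1\ell_1+\lambda p_2\ell_2=0$ shows all these fibers meet a common bidegree $(1,0)$ line, contradicting $A\in\Tt^*(3)$. Your argument instead is purely cohomological: Lemma~\ref{ii1} with $d=1$ gives $h^1(\Ii_A(1,1))\le 1$, and combining with $\chi(\Ii_A(1,1))=-1$ from Formula~\eqref{p1} yields $h^0(\Ii_A(1,1))=0$, so $|\Ii_A(1,1)|$ is empty outright. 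This is both stronger (it excludes reducible $(1,1)$ surfaces as well) and more general (it only uses $A\in\Cc^*(3)$, so it applies to any non-collinear triple of disjoint smooth conics, not just twistor fibers); in fact what you prove is exactly the paper's later Lemma~\ref{u7.1}, which the authors establish in Section~\ref{sez-4} by a longer argument via Theorem~\ref{c01} and Proposition~\ref{ee1}, and there is no circularity since Lemma~\ref{ii1} and Formula~\eqref{p1} precede the statement in question. What the paper's proof buys in exchange for its reliance on external results is geometric information that your computation does not give and that the paper reuses immediately afterwards: namely the structure of the exceptional case, i.e.\ that a $(1,1)$ surface containing three \emph{collinear} twistor fibers is $j$-invariant and, when smooth, contains infinitely many twistor fibers all meeting one line $L$ and its partner $j(L)$.
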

\begin{proof}
Suppose there exists an irreducible surface $M$ of bidegree $(1,1)$ containing $A\in \Tt^*(3)$.
First of all, thanks to~\cite[Corollary 8.4]{abbs}, we have that $M=j(M)$.
Then, as explained at the beginning of Section 7.1 in~\cite{abbs}, either $M$ is smooth or
reducible. But if $M$ is a smooth $j$-invariant surface of bidegree $(1,1)$ containing 
$3$ twistor fibers, then it contains infinitely many of them, and these are parameterized by a
circle (see~\cite[Theorem 7.2]{abbs}). However, smooth surfaces of bidegree $(1,1)$ can be seen as the blow up of $\PP^{2}$ at three points both via $\pi_{1}$ and $\pi_{2}$.
In particular, up to unitary transformations, it is possible to write $M$ as the set 
$\{([p_{0}:p_{1}:p_{2}],[\ell_{0}:\ell_{1}:\ell_{2}])\in\FF\,|\,p_{1}\ell_{1}+\lambda p_{2}\ell_{2}\}$,
with $\lambda\in\RR\setminus\{0,1\}$. In these coordinates, $M$ contains 
$\pi_{\mu}^{-1}([1:0:0]), \pi_{\mu}^{-1}([0:1:0]), \pi_{\mu}^{-1}([0:0:1])$, for $\mu=1,2$ and, the  family of twistor fibers $\pi^{-1}([q_{0}:q_{1}:q_{2}])$ defined by:
$$
\begin{cases}
q_{0}=0\mbox{ and }|q_{1}|^{2}\lambda+|q_{2}|^{2}=0&\mbox{ if }\lambda<0\,,\\
q_{1}=0\mbox{ and }|q_{2}|^{2}-|q_{0}|^{2}(\lambda-1)=0&\mbox{ if }\lambda>1\,,\\
q_{2}=0\mbox{ and }|q_{1}|^{2}\lambda+|q_{0}|^{2}(\lambda-1)=0&\mbox{ if }0<\lambda<1\,.
\end{cases}
$$
Take for instance $\lambda<0$, then every twistor fiber in $M$ intersects the line $L=\pi_{2}^{-1}([1:0:0])$ of bidegree $(1,0)$. An analogous consideration holds if $0<\lambda<1$ or $\lambda>1$. So we get a contradiction.
\end{proof}

{In the previous lemma we showed that an irreducible $(1,1)$-surface
cannot contain three twistor fibers in general position.
On the other hand if $M$ is a $(1,1)$-surface containing a given $A\in \Tt(3)\setminus \Tt^*(3)$, then by \cite[Corollary 8.3]{abbs} we have that $M$ is $j$-invariant, so it is either smooth or reducible.} Moreover, if $M$ contains infinitely many twistor fibers, then all of them intersect a bidegree $(1,0)$ curve $L$ and its associated $(0,1)$ curve $R=j(L)$.

\begin{remark}\label{esistono}
In~\cite[Section 8.1]{abbs} we gave examples of bidegree $(1,1)$ smooth surfaces containing exactly $0$, $1$ or $2$ twistor fibers.
\end{remark}

\begin{remark}
Since a smooth surface of bidegree $(1,1)$ is a Del Pezzo surface of degree $6$, it is characterized {either by  the three curves of bidegree $(1,0)$ that it contains, or by the three curves of bidegree $(0,1)$ that it contains.}
In fact, recall that these surfaces represent the blow up of $\PP^{2}$ at three points with respect to either $\pi_{1}$ or $\pi_{2}$. Note that if a smooth surface of bidegree $(1,1)$ is $j$-invariant, then it is uniquely determined by three twistor fibers contained in it, and not by the curves $L$ and $R=j(L)$ (of bidegree $(1,0)$ and $(0,1)$, respectively) which intersect all the twistor fibers. 
\end{remark}


Now we are ready to give the proof of our first main theorem.

\begin{proof}[Proof of Theorem \ref{u6}:]
Thanks to Remark~\ref{m3} and Lemma~\ref{primo-caso}, the result is true for $d=0$ and $d=1$.

{Suppose now that $d\ge 2$ and, by contradiction, that
 $S$ is an irreducible $(1,d)$-surface containing $A\in \Tt^*(d+2)$.} Call $C$ a connected component of $A$ and set $B:= A\setminus C$. Let $Y$ be the unique (by {Lemma~\ref{esistono}}) element of $|\Ii_C(0,1)|$ and consider the
 following exact sequence {(which is a special case of the one in formula~\eqref{successione})}:
\begin{equation}\label{equ1}
0 \to \Ii_B(1,d-1)\to \Ii_A(1,d)\to \Ii_{(B\cap Y)\cup C,Y}(1,d) \to 0.
\end{equation}

From Formul\ae~\eqref{h0O} and~\eqref{eq000}, we have $h^0(\Oo_A(1,d)) =(d+2)^2 =h^0(\Oo_{\FF}(1,d)) +1$. 
Obviously we have that $B\cap Y$ is formed by $d+1$ points, and 
up to the identification of $Y$ with $F_1$ given in Formula~\eqref{coeff}, since the curve $C$ corresponds to an element of type $h+f$ in $F_{1}$, we can write $\Ii_{(B\cap Y)\cup C,Y}(1,d) \cong {\Ii_{(B\cap Y)\cup C,F_1}(h+(d+1)f)\cong} \Ii_{B\cap Y,F_1}(df)$. Since $A\in \Tt^*(d+2)$ and every element of $|f|$ meets $C$ (indeed $(h+f)f=1$), the restriction to $B\cap Y$ of the ruling morphism $D\to \PP^1$ associated with $|f|$ is injective. 
Thus, $h^1(Y,\Ii_{A\cap Y,Y}(1,d)) =0$ and the exact sequence \eqref{equ1} gives $h^1(\Ii _B(1,d-1)) \ge h^1(\Ii _A(1,d)) \ge 2$, where the latter is greater than or equal to $2$ because $\chi(\Ii _A(1,d))=-1$ (see Formula~\eqref{p1}), and we assume that $h^0(\Ii _A(1,d)) \ge 1$. 
So we also have $h^0(\Ii_B(1,d-1)) >0$. 

Recall that $B\in \Tt^*(d+1)$ and thus, by the inductive assumption, $B$ is not contained in any irreducible $E\in |\Oo_{\FF}(1,d-1)|$. Therefore, thanks to Remarks~\ref{m3} and~\ref{m3.1}, there must be an irreducible $M\in |\Oo_{\FF}(1,1)|$ containing at least $3$ connected components of $B$, say $B'\subset M$ with $B'$ of bidegree $(3,3)$.
Hence, by Lemma~\ref{primo-caso},
there exists a curve $L$
 of bidegree $(1,0)$ such that $\#(L\cap B')=3$. So $A\notin \Tt^*(d+2)$, a contradiction. 
 \end{proof}

Having proved that an irreducible bidegree $(1,d)$-surface cannot contain $d+2$ (or more) non-collinear twistor fibers, we now prove  that all the other cases can indeed occur.
In particular, in the following result we prove a stronger version of Theorem~\ref{new-thm} in the case of $n\le d+1$.

\begin{theorem}\label{u5}
Fix integers {$d\ge 1$} and $0\le n \le {d+1}$. 
Then, for any $A\in\Tt^*(n)$
there exists an irreducible $S\in |\Oo_{\FF}(1,d)|$ containing $A$. Moreover, the general $S\in |\Ii_{A}(1,d)|$ {contains no other twistor fibers}.
\end{theorem}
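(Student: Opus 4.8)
The plan is to separate the statement into three tasks: showing that $|\Ii_A(1,d)|$ is nonempty, that its general member is irreducible, and that this general member contains no twistor fiber beyond those of $A$. Nonemptiness is immediate from the cohomology already developed: since $\Tt^*(n)\subseteq\Cc^*(n)$ and $n\le d+1$, Corollary~\ref{cor1} gives $h^1(\Ii_A(1,d))=0$, so by Formula~\eqref{p1}
\[
h^0(\Ii_A(1,d))=(d+1)(d+3)-n(d+2)\ge d+1>0 ,
\]
and $|\Ii_A(1,d)|$ has projective dimension at least $d$.

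For irreducibility I would show that the reducible members form a proper closed subset of $|\Ii_A(1,d)|$. By additivity of bidegree, a reducible $S$ splits as $S=S_0\cup V$, where $V=\pi_1^{-1}(D)$ is vertical of bidegree $(0,k)$ for a plane curve $D$ of degree $k\ge1$, and $S_0$ is irreducible of bidegree $(1,d-k)$ (recall $\pi_{1|S_0}$ is birational). Each twistor fiber of $A$ lies in $S_0$ or in $V$; by Remarks~\ref{m3.1} and~\ref{m3}, a fiber $C$ contained in $V$ forces a $(0,1)$-component $\pi_1^{-1}(\pi_1(C))\subset V$, and distinct (hence disjoint) fibers give distinct lines, so if $m$ fibers lie in $V$ then $k\ge m$ and $S_0\in|\Ii_{A_0}(1,d-k)|$ with $A_0\subseteq A$ the remaining $n-m$ fibers. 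Bounding the moduli of such pairs — the degree-$k$ curves $D$ through the $m$ prescribed lines together with $|\Ii_{A_0}(1,d-k)|$, whose dimension is controlled again by Corollary~\ref{cor1} — against $\dim|\Ii_A(1,d)|$ shows that for every admissible $(k,m)$ the corresponding family is strictly smaller (for instance, when $k=m=1$ the codimension is $d+2-n\ge1$). Summing over the finitely many $(k,m)$ gives that the reducible locus is proper, so the general member is irreducible and contains $A$; the base cases $d=0,1$ are covered by Remark~\ref{m3} and Lemma~\ref{primo-caso}.

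For the last assertion, recall that two distinct twistor fibers are disjoint, so any twistor fiber $C$ distinct from those of $A$ satisfies $A\cup C\in\Cc(n+1)$. I would form the incidence $\Sigma=\{(S,C)\in|\Ii_A(1,d)|\times\Tt : C\not\subset A,\ C\subset S\}$ and bound its image under the first projection. If $n\le d$ and $C$ is general, then $A\cup C\in\Cc^*(n+1)$ with $n+1\le d+1$, so Corollary~\ref{cor1} gives $h^1(\Ii_{A\cup C}(1,d))=0$ and hence $|\Ii_{A\cup C}(1,d)|$ has complex codimension exactly $d+2$ in $|\Ii_A(1,d)|$; since $\dim_{\RR}\Tt=4$ and $4<2(d+2)$ for $d\ge1$, the projection $\Sigma\to|\Ii_A(1,d)|$ cannot be dominant, and a general $S$ contains no extra fiber. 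If $n=d+1$, then $\chi(\Ii_{A\cup C}(1,d))=-1$ by Formula~\eqref{p1} and Lemma~\ref{ii1} forces $h^0(\Ii_{A\cup C}(1,d))=0$ for general $C$, i.e.\ no member of the system contains such a $C$ at all; the special fibers $C$ collinear with two fibers of $A$ form a lower-dimensional subfamily of $\Tt$ and are dealt with by the same type of count.

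The main obstacle is the irreducibility step: carrying out the moduli comparison uniformly over all decomposition types $(k,m)$ — in particular when the vertical part $V$ is itself reducible and several of its $(0,1)$-pieces each carry one twistor fiber — is exactly the delicate, case-by-case bookkeeping announced in the introduction. It is here that the hypothesis $A\in\Tt^*(n)$ (no three fibers collinear) is essential, since it is what keeps each family of reducible surfaces strictly below the dimension of the full linear system.
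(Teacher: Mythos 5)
Your three-step plan is, in outline, the paper's own proof: nonemptiness via Corollary~\ref{cor1} and Formula~\eqref{p1}, a dimension count on the locus of reducible members, and an incidence count pitting the complex codimension of $|\Ii_{A\cup C}(1,d)|\subset|\Ii_A(1,d)|$ against the real dimension of the family of candidate twistor fibers (the paper organizes the second step by induction on $d$ and the third by the cases $d=2$ and $d\ge 3$, but the counts are the same). There is, however, a genuine gap at the point you dismiss as ``dealt with by the same type of count'': the collinear fibers in the last step. If $C$ is a twistor fiber meeting a line $L$ through two components of $A$, then $A\cup C\notin\Cc^*(n+1)$, so neither Corollary~\ref{cor1} nor Lemma~\ref{ii1} applies, and the codimension of $|\Ii_{A\cup C}(1,d)|$ in $|\Ii_A(1,d)|$ is no longer $d+2$; this is exactly the regime where $h^1$ jumps (compare Lemma~\ref{no2} and Theorem~\ref{nok1}), and a priori the codimension could be $0$ or $1$. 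The twistor fibers meeting $L$ form a real $2$-dimensional family, so your incidence count needs complex codimension at least $2$ for each such $C$: with codimension $c$ the union of the corresponding loci has real dimension up to $2+\dim_{\RR}|\Ii_A(1,d)|-2c$, which fails to be proper precisely when $c\le 1$ --- conceivably every member of $|\Ii_A(1,d)|$ would then contain such a fiber. The paper closes exactly this hole with a residual exact sequence on the $(0,1)$-surface $Y$ containing $C$, reducing the needed bound to $h^0(F_1,\Ii_{A\cap Y}(df))\le d$ (and $\le d-1$ when $n=d+1$), proved via global generation of $\Oo_{F_1}(df)$ and, in the extremal case, the fact that $A\in\Cc^*(n)$ forces the points of $A\cap Y$ to hit at least two fibers of the ruling. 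Some argument of this kind must be supplied; it does not follow from the results you cite.

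A second, smaller issue sits in your irreducibility count. For a splitting type $(k,m)$ with $n-m\ge d-k+2$, the dimension of $|\Ii_{A_0}(1,d-k)|$ is \emph{not} ``controlled by Corollary~\ref{cor1}'', which requires $n-m\le d-k+1$, and $h^0$ there need not equal $\chi$. What saves you is that these families are empty: picking $d-k+2$ components of $A_0$ gives an element of $\Cc^*(d-k+2)$, for which Lemma~\ref{ii1} yields $h^1\le 1$ while $\chi=-1$ by Formula~\eqref{p1}, hence $h^0(\Ii_{A_0}(1,d-k))=0$ (alternatively, quote Theorem~\ref{u6}, proved independently, for the irreducible piece $S_0$). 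With that amendment your bookkeeping does close: writing $j=k-m$, the admissibility constraint $n-m\le d-k+1$ gives $d+2-n\ge j+1$, and the codimension of the type-$(k,m)$ family, namely $k(d+2-n)+j(d-k+2)-\binom{j+2}{2}+1$, is then $\ge k\ge 1$ for every type, so the finitely many reducible families form a proper closed subset, as you claimed.
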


\begin{proof}
We use induction on the integer $d$. If $d=1$, the statement is true by Remark~\ref{esistono}.

Now assume $d\ge2$ and take an element $A\in \Tt^*(n)$. Since $\Tt^*(n)$ is Zariski dense in $\Cc(n)$, $A$ has the bigraded Hilbert function of a general  element of $\Cc(n)$. 
Thus, thanks to Corollary \ref{cor1}, we have that $h^1(\Ii_A(1,d)) =0$ and, by Formula~\eqref{p1}
$$
h^{0}(\Ii_A(1,d))=(d+1)(d+3)-n(d+2)=:N_{n}+1.
$$
Fix a connected component $C$ of $A$ and set $B:= A\setminus C$. 
If $Y$ denotes be the only $(0,1)$-surface containing $C$, Corollary \ref{cor1} implies that $h^1(\Ii_B(1,d-1)) =0$ and, again by Formula~\ref{p1}
$$
h^{0}(\Ii_B(1,d-1))=d(d+2)-(n-1)(d+1).
$$

By the inductive assumption, we know that $|\Ii_B(1,d-1)|\neq \emptyset$ and  a general $W\in |\Ii_B(1,d-1)|$ is irreducible. 
Thus $Y\cup W\in |\Ii_A(1,d)|$ and $Y\cup W$ has $2$ irreducible components, one of which has bidegree $(1,d-1)$. 

Let $C_1,\ldots,C_{n}$ be the connected components of $A$, $B_{i}:=A\setminus C_{i}$ and $Y_i$ be the unique element in $|\Ii_{C_i}(0,1)|$.
The set of all the reducible surfaces $W\cup Y_i \in|\Ii_A(1,d)|$, where  $W\in |\Ii_{B_{i}}(1,d-1)|$, 
is the union of $e$ projective spaces (one for each choice of $C_{i}$), each of them of codimension $h^{0}(\Ii_A(1,d))-h^{0}(\Ii_B(1,d-1))=d+2-n>0$
 in $|\Ii_A(1,d)|{=\PP^{N_{n}}}$ 
(of codimension $1$ if $n=d+1$). Therefore, they do not cover all $ |\Ii_A(1,d)|$.

Now we want to exclude other possible splittings. In particular, we consider reducible surfaces of the form $W_1\cup D_1$ with $W_1$ irreducible, $D_1$ possibly reducible of bidegree $(0,x)$ for some $x\ge 2$ and hence $W_1$ of bidegree $(1,d-x)$. Remark \ref{m3.1} shows that only irreducible components of $D_1$ of bidegree $(0,1)$ may contain some component of $A$. We get that the surface is of the form $W_1\cup D_2\cup D_3$ with $W_1\cup D_2$ of bidegree $(1,d-1)$, but, as shown before, these kind of surfaces do not cover all $ |\Ii_A(1,d)|$, hence we have the thesis.

Now we prove that a general $S\in |\Ii_A(1,d)|$ does not contain any other twistor fibers. We start by analyzing the case $d=2$ and {discussing the cases $n=1,2,3$} separately.

Assume $n=1$. Fix $C\in \Tt(1)$. Let $\Cc_C(1)$ denote the set of all $B\in \Cc(1)$ such that $B\cap C=\emptyset$. Note that $\Tt(1)\setminus \{C\} = \Cc_C(1)\cap \Tt(1)$.
For any $B\in \Cc_C(1)$, we have $h^1(\Ii _{C\cup B}(1,2)) =0$ and hence $h^0(\Ii_{C\cup B}(1,2)) =h^0(\Ii_C(1,2))-4$.
Let $\Xx_C$ be the set of all smooth and irreducible surfaces of bidegree $(1,2)$ containing $C$. It is a non-empty Zariski open subset of $|\Ii_C(1,2)|$. For any $B\in \Cc_C(1)$ the set of all $S\in \Xx_C$ containing $B$ has complex codimension $4$
and hence real codimension $8$ as real manifolds. Since $\Tt(1)$ has real dimension $4$, a general $S\in \Xx_C$ contains no other twistor fiber.

Now let  $n=2$. Fix $A\in \Tt(2)$ and let $\Cc_A(1)$ denote the set of all $B\in \Cc(1)$ such that $B\cap A=\emptyset$ and $B\cup A\in \Cc^*(3)$. 
Note that $h^1(\Ii _{A\cup B}(1,2)) =0$. So, as in the previous step, we get that a sufficiently general $S\in |\Ii_A(1,2)|$ does not contain an element of $\Tt_A(1)$. Assume $S$ contains $B\in \Tt(1)$ such that there exists a curve of bidegree $(1,0)$ that intersects every connected component of $A\cup B$. Note that $L$ is uniquely determined by $A$.
Let $\Cc(A,L)$ denote the set of all $B\in \Cc(1)$ such that $B\cap A=\emptyset$ and $L$ meets $B$ and set $\Tt(A,L):= \Cc(A,L)\cap \Tt(1)$. For any $o\in L\setminus (L\cap C)$, the set of all $B\in \Cc(A,L)$ containing $o$ is a non-empty family of complex dimension $1$, while there exists a unique twistor fiber containing $o$. The set $\Cc(A,L)$ is a complex manifold of dimension $2$, while $h^1(\Ii_{A\cup B}(1,2)) =1$ and hence $h^0(\Ii_{A\cup B}(1,2)) =h^0(\Ii_A(1,2)) -3$. Since $\dim \Cc(A,L)=2$, a general $S\in |\Ii_A(1,2)|$ does not contain an element of $\Cc(A,L)$. {Hence, there is} no twistor fiber $B$ such that $A\cup B\in \Tt^*(3)$. 

Assume now that $n=3$. First, consider a general $A\in \Tt^*(3)$. Then we have $h^1(\Ii _A(1,2)) =0$ and $h^0(\Ii_A(1,2)) =3$. For any $x\in \{0,1,2,3\}$, let $\Cc(A,x)$ denote the set of all $C\in \Cc(1)$ such
that $A\cap C=\emptyset$ and $h^0(\Ii_{A\cup C}(1,2)) =x$ and set $\Tt(A,x):= \Cc(A,x)\cap \Tt(1)$. For a general $D\in \Cc(4)$ we have $h^0(\Ii _D(1,1)) =0$ (but $h^1(\Ii_D(1,1)) =1$). 
Fix $C\in \Cc(1)$ so that $C\cap A=\emptyset$, call $Y$ the only element of $|\Ii_C(0,1)|$. 
Since $C\cap A=\emptyset$, no connected component of $A$ is contained in $Y$, and $Y\cap A$ is formed by $3$ points, all of them in $Y\setminus C$.  We can easily handle the case $x=0$, since any curve $C\in \Cc(A,0)$ is not contained in any element of $|\Ii_A(1,2)|$. We have $\Oo_Y(1,2)(-C)\cong \Oo_{F_1}(2f)$ and hence $C\in \Cc(A,0)$ if no curve of bidegree $(1,0)$ $L\in |f|$ intersects $2$ of the components of $A$ {(since $h^1(\Ii_A(1,1)) =1$ the last statement is only an ``if'' and not an ``if and only if'')}. A necessary condition for being $C\in \Cc(A,2)$ is that $L$ intersects all connected components of $A$, but this is excluded
because $A\in \Tt^*(3)$. Since $A\in \Tt^*(3)$, there are exactly $3$ curves $L_1,L_2,L_3$ of bidegree $(1,0)$ intersecting $2$ of the connected components of $A$. We claim that a general $S\in |\Ii_A(1,2)|$ does not contain $C\in \Cc(1)$ such that
$C\cap A=\emptyset$. 
The family of smooth conics intersecting $L_i$ has complex dimension $2$, while
the family of twistor fibers intersecting $L_{i}$ has real dimension $2$. As the general 
$S\in |\Ii_A(1,2)|$ has only finitely many conics, it has only a finite number of elements in 
$\Cc(A,1)$ and, for the general, none of them is a twistor fiber.


We now proceed to analyze the case $d\ge 3$.
Suppose the general surface of $|\Ii_{A}(1,d)|$ contains the  twistor fiber $C\nsubseteq A$. So $A\cap C=\emptyset$. Set $A':= A\cup C$. Take $Y\in |\Oo_Y(0,1)|$ containing $C$ and consider the residual exact sequence
\begin{equation}\label{eqa0}
0 \to \Ii _A(1,d-1) \to \Ii_{A'}(1,d) \to \Ii_{(Y\cap A)\cup C,Y}(1,d)\to 0.
\end{equation}
We have $\Ii_{C,Y}(1,d)\cong \Oo_{F_1}(df)$.

Assume first $n\le d$. If $A'\in \Tt^*(n+1)$, then $h^1(\Ii_A'(1,d)) =0$ and hence $h^0(\Ii_A'(1,d)) =h^0(\Ii_A(1,d))-d-2$. Since $\dim \Cc(1)=4$, we have for $d\ge 3$ that
the general $S\in |\Ii_A(1,d)|$ does not contain $C$, so $A\cup C\in\Tt^*(n+1)$.

Now suppose $A'\notin \Tt^*(n+1)$.  Then there are connected components $C'$ and $C''$ of $A$ such that $C\cup C'\cup C''\notin \Tt^*(3)$, i.e. $C$ intersects the unique line $L$ which meets $C'$ and $C''$. Since $\dim L=1$, to exclude this case it is sufficient to prove that $h^0(\Ii _B(1,d)) \le h^0(\Ii_A(1,d))-2$, i.e. $h^0(F_1,\Ii_{A\cap Y}(df)) \le d$. But since $h^0(\Oo_{F_1}(df)) =d+1$, then $\Oo_{F_1}(df)$ is globally generated and $A\cap Y\ne \emptyset$,  therefore
$h^0(F_1,\Ii_{A\cap Y}(df)) \le d$.

Now let us assume $n=d+1$ and that $A'\in \Cc^*(d+2)$. By Lemma \ref{ii1}  we have $h^1(\Ii_{A'}(1,d)) \le 1$ and hence $h^0(\Ii_{A'}(1,d)) \le h^0(\Ii_A(1,d))-d-1$. 
Now suppose $A'\notin \Cc^*(d+2)$. We need $h^0(F_1,\Ii_{A\cap Y}(df))\le d-1$. Let $\pi: F_1\to \PP^1$ denote the ruling of $F_1$. Since $\Oo_{\PP^1}(d)$ is very ample, $h^0(F_1,\Ii_{A\cap Y}(df))\le d-1$ if and only if $\#\pi(A\cap Y)\ge 2$, which is true because $\#(A\cap Y)=d+1\ge 3$ and (since $A\in \Cc^*(d+1)$ no fiber $F$ of $\pi$ contains at least $3$ points of $A$). The only remaining case is $d=3$ and $n=4$, which can be handled by simply adapting the previous argument for $d=2$ and $n=3$.
\end{proof}

Having proved Theorem~\ref{new-thm} in the case  $n\le d+1$ for non collinear twistor fibers, we now focus on the case of $n=d+2$. In this case we need some further preliminary results.

\begin{lemma}\label{aaa1}
Fix $d>0$, $n\le d+2$ and consider a general $A\in \Tt(n)$. Then $h^1(\Ii_A(1,d)) =0$.
\end{lemma}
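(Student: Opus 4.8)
The plan is to deduce the vanishing from the non-collinear case already treated, exploiting that twistor fibers in general position are non-collinear. Since $\Tt^*(n)$ is Zariski dense in $\Tt(n)$, a general $A\in\Tt(n)$ lies in $\Tt^*(n)\subset\Cc^*(n)$; moreover $h^1(\Ii_{A}(1,d))$ is upper semicontinuous as $A$ varies in $\Cc(n)$, so it suffices to check the vanishing on a single, equivalently on a general, element of $\Cc^*(n)$. First I would record this reduction, as it reorients the problem from twistor fibers to arbitrary non-collinear smooth conics and lets us quote the cohomological machinery of Section~\ref{prelimi} directly.

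For $0\le n\le d+1$ the statement is then nothing more than Corollary~\ref{cor1}: a general $A\in\Tt(n)$ belongs to $\Cc^*(n)$, hence $h^1(\Ii_{A}(1,d))=0$, and Formula~\eqref{p1} records the companion value $h^0(\Ii_A(1,d))=(d+1)(d+3)-n(d+2)$. This disposes of the whole range below the boundary with no additional effort.

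To treat the configurations uniformly, and to prepare the residual estimates, I would argue by induction on $d$ through the sequence~\eqref{successione}. Choosing a connected component $C$ of $A$, setting $B:=A\setminus C$, and taking the unique $Y\in|\Ii_C(0,1)|$, the sequence with $(a,b)=(1,d)$ reads
$$0\to\Ii_B(1,d-1)\to\Ii_A(1,d)\to\Ii_{(B\cap Y)\cup C,Y}(1,d)\to 0,$$
and, identifying $Y$ with $F_1$ as in~\eqref{coeff}, the quotient sheaf becomes $\Ii_{B\cap Y,F_1}(df)$. By the inductive hypothesis the left-hand term has vanishing $h^1$, so the associated cohomology sequence reduces the claim to the vanishing of $h^1(\Ii_{B\cap Y,F_1}(df))$, that is, to the statement that the $n-1$ points $B\cap Y$ impose independent conditions on $|\Oo_{F_1}(df)|$. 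Since $H^0(\Oo_{F_1}(df))$ is pulled back from $\Oo_{\PP^1}(d)$ along the ruling $F_1\to\PP^1$, these points impose independent conditions precisely when no fiber of the ruling carries two of them; and a fiber $L\in|f|$ through two points of $B\cap Y$ is a bidegree $(1,0)$ curve with $L\subset Y$, hence also meeting $C$, so that $\#(L\cap A)\ge 3$ — exactly the failure of the non-collinearity condition of Definition~\ref{definitionC*}. For a general $A\in\Tt^*(n)\subset\Cc^*(n)$ no such fiber exists, and the induction closes throughout the range $n\le d+1$.

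The main obstacle is the extremal boundary $n=d+2$. There $B\cap Y$ consists of $d+1$ points on $F_1$ while $h^0(\Oo_{F_1}(df))=d+1$, so the restriction map occurs between equidimensional spaces; equally, the residual $B$ lies in $\Tt(d+1)$ for the twist $(1,d-1)$, which is again the extremal range, so the inductive input only provides the bound $h^1(\Ii_B(1,d-1))\le 1$ of Lemma~\ref{ii1} rather than a vanishing. Consequently the naive induction yields at best $h^1(\Ii_A(1,d))\le 1$, and pinning down the behaviour at the boundary — controlling the precise position of the $d+1$ points $B\cap Y$ along the ruling for a \emph{general} twistor configuration, so as to separate the independent-conditions contribution from the residual one — is the delicate step where genericity and the non-collinearity of the twistor fibers are indispensable.
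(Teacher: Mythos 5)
Your argument for the range $0\le n\le d+1$ is correct and coincides with the route the paper itself relies on: reduce by Zariski density and semicontinuity to a general $A\in\Cc^*(n)$ and invoke Corollary~\ref{cor1}, whose proof is exactly the residual-sequence induction you describe, with quotient $\Ii_{B\cap Y,F_1}(df)$ and three-by-three collinearity as the only possible obstruction. As a proof of the lemma \emph{as stated}, your proposal is incomplete, since you leave the extremal case $n=d+2$ open. But the obstruction you identified there is not a removable technicality: the statement is in fact \emph{false} for $n=d+2$. By Formula~\eqref{p1} (see Remark~\ref{remarkCHI}, which also records that $\chi(\Ii_A(1,d))=h^0(\Ii_A(1,d))-h^1(\Ii_A(1,d))$), every $A\in\Cc(d+2)$ satisfies
$$\chi(\Ii_A(1,d))=(d+1)(d+3)-(d+2)^2=-1,$$
so $h^1(\Ii_A(1,d))=h^0(\Ii_A(1,d))+1\ge 1$ no matter how general $A$ is; no genericity assumption can produce the claimed vanishing. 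Indeed, running your own induction (base case $B\in\Cc(2)$ with twist $(1,0)$, where $h^0(\Ii_B(1,0))=0$ by Remark~\ref{m3}, hence $h^1(\Ii_B(1,0))=1$) shows that a general $A\in\Tt(d+2)$ has $h^0(\Ii_A(1,d))=0$ and $h^1(\Ii_A(1,d))=1$ exactly; your remark that the available input (Lemma~\ref{ii1}) yields at best $h^1\le 1$ is sharp.

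You should also know that the paper's own proof goes wrong at precisely the step you flagged. It reduces to $n=d+2$, considers the residual sequence~\eqref{equ1}, shows $h^i(F_1,\Ii_{B\cap Y}(df))=0$ for $i=0,1$ (for general $A$ the $d+1$ points of $B\cap Y$ lie on distinct fibers of the ruling), and then concludes $h^1(\Ii_A(1,d))=0$. That conclusion additionally requires $h^1(\Ii_B(1,d-1))=0$; but $B$ is a general element of $\Cc(d+1)$ with the twist $(1,d-1)$, i.e.\ again an extremal configuration, so the required vanishing is the same (false) statement one step down, and it fails outright at the bottom of the induction. The lemma is presumably intended for $n\le d+1$, with its proof reducing to $n=d+1$, where the residual $B\in\Cc(d)$ is covered by Theorem~\ref{c01}; under that correction your argument is complete and essentially identical to the paper's.
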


\begin{proof}
Since $\Tt(n)$ is Zariski dense in $\Cc(n)$, it is sufficient to prove the statement for a general $A\in \Cc(n)$. 
Obviously, it is sufficient to prove the case $n=d+2$. 
Take a connected component $C$ of $A$ and set $B:= A\setminus C$ and $Y\in |\Ii_C(0,1)|$. 
Consider the residual exact sequence of $Y$, as in Formula~\eqref{equ1}.
Recall the correspondence $Y\cong F_1$ given in Formula~\eqref{coeff} and let $\rho: Y\to \PP^1$ denote its ruling. Since $A$ is general $\#\rho(A\cap Y)=d+1$ and hence $h^i(F_1,\Ii_{B\cap Y}(df)) =0$, for $ i\in\{0, 1\}$. Hence $h^1(\Ii_A(1,d)) =0$.
\end{proof}

{From Theorem \ref{u6}, we already know that an irreducible $(1,d)$ surface can contain a union of $d+2$ twistor fibers only if it belongs to  
$\Tt(d+2)\setminus\Tt^{*}(d+2)$.} 
Therefore, we introduce the following {notation for sets of disjoint smooth conics that are collinear}.
Given a curve $L$ of bidegree $(1,0)$ 
and an integer $n>0$, let $\Cc(n,L)$ denote the set of all $A\in \Cc(n)$ such that every connected component of $A$ meets $L$. The set $\Cc(n,L)$ is isomorphic (as real algebraic variety)
to the set $\Ss(L,n)$ of all subsets of $L$ with cardinality $n$ and is therefore irreducible. An analogous definition and observation can be made for a curve $R$ of bidegree $(0,1)$, and of course, for the family $\Tt$ instead of $\Cc$.

\begin{lemma}\label{no2}
Fix integers $n\ge 3$ and $d\ge 1$ and take any $A\in \Tt(n,L)$. Then,  $$h^1(\Ii _A(1,d)) \ge n-2 +\max \{0,n-(d+1)\}.$$
\end{lemma}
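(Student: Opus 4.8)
The plan is to read off $h^1(\Ii_A(1,d))$ as the failure of surjectivity of a restriction map and then to extract two independent incidence conditions from the two curves attached to the collinear configuration. Write $A=C_1\cup\cdots\cup C_n$. Since $n\ge 3$ and all the $C_i$ meet the bidegree $(1,0)$ curve $L$, Remark~\ref{LjR} guarantees that they also all meet the bidegree $(0,1)$ curve $R:=j(L)$; moreover each $C_i$ meets $L$ in a single point $p_i$ and $R$ in a single point $q_i$ (because $\pi_2$, resp. $\pi_1$, is injective on the conic), and $p_i\ne q_i$ since $L$ and $R=j(L)$ are disjoint. By \eqref{equation1} we have $h^1(\Oo_\FF(1,d))=0$, so the sequence $0\to\Ii_A(1,d)\to\Oo_\FF(1,d)\to\Oo_A(1,d)\to0$ identifies $h^1(\Ii_A(1,d))$ with the dimension of the cokernel of the restriction map $\rho\colon H^0(\Oo_\FF(1,d))\to H^0(\Oo_A(1,d))=\bigoplus_{i=1}^n H^0(\Oo_{C_i}(1,d))$, whose target has dimension $n(d+2)$ by \eqref{h0O}.

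The decisive feature is an asymmetry in the degrees. Restricting $\Oo_\FF(1,d)$ to the conic $C_i$ (bidegree $(1,1)$) gives degree $d+1$, to $L$ (bidegree $(1,0)$) gives degree $1$, and to $R$ (bidegree $(0,1)$) gives degree $d$. Thus, for $s\in H^0(\Oo_\FF(1,d))$, the value $s(p_i)$ depends only on $s|_L\in H^0(\Oo_L(1,d))\cong H^0(\Oo_{\PP^1}(1))$, so the vector $(s(p_i))_i$ lies in the image $W_L$ of the evaluation $H^0(\Oo_L(1,d))\to\CC^n$, whence $\dim W_L\le 2$. Likewise $(s(q_i))_i$ lies in the image $W_R$ of $H^0(\Oo_R(1,d))\cong H^0(\Oo_{\PP^1}(d))\to\CC^n$, so $\dim W_R\le\min\{d+1,n\}$. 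I would then introduce the combined evaluation $\varepsilon\colon\bigoplus_i H^0(\Oo_{C_i}(1,d))\to\CC^n\oplus\CC^n$, $(\sigma_i)\mapsto\big((\sigma_i(p_i))_i,(\sigma_i(q_i))_i\big)$, which is surjective because on each $C_i$ the evaluation at the two distinct points $p_i\ne q_i$ of a degree $d+1\ge 2$ bundle is onto $\CC^2$. By construction $\operatorname{im}\rho\subseteq\varepsilon^{-1}(W_L\oplus W_R)$.

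A dimension count then closes the argument: since $\varepsilon$ is surjective,
$$\dim\varepsilon^{-1}(W_L\oplus W_R)=n(d+2)-\big(2n-\dim W_L-\dim W_R\big),$$
so that $h^1(\Ii_A(1,d))=n(d+2)-\dim\operatorname{im}\rho\ge 2n-\dim W_L-\dim W_R\ge 2n-2-\min\{d+1,n\}$, and this last quantity equals exactly $(n-2)+\max\{0,n-(d+1)\}$. The step I expect to require the most care is the surjectivity of $\varepsilon$, i.e. the genuine independence of the two incidence conditions: this is where $p_i\ne q_i$ and the hypothesis $d\ge 1$ enter, and it is precisely what lets the contributions of $L$ and $R$ add up rather than overlap. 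The asymmetry between the degrees $1$ on $L$ and $d$ on $R$ is what produces the two separate summands $n-2$ and $\max\{0,n-(d+1)\}$ in the stated bound.
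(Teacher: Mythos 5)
Your proof is correct, and it reaches the bound by a genuinely different mechanism than the paper, though the geometric input is the same. The paper specializes $A$ to a zero-dimensional scheme: it picks $d+2$ points $S_i$ on each conic, arranged so that $(A\cap L)\cup (A\cap j(L))\subseteq S:=S_1\cup\cdots\cup S_n$; since restriction $H^0(\Oo_{C_i}(1,d))\to H^0(\Oo_{S_i}(1,d))$ is bijective, one gets $\chi(\Ii_A(1,d))=\chi(\Ii_S(1,d))$ and $H^0(\Ii_A(1,d))=H^0(\Ii_S(1,d))$, and then B\'ezout --- a bidegree $(1,d)$ surface meets $L$ in a single point and $j(L)$ in $d$ points unless it contains them --- shows that $(n-2)+\max\{0,n-(d+1)\}$ of the chosen points can be deleted without changing $H^0(\Ii_S(1,d))$, whence the bound by Euler-characteristic bookkeeping. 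You avoid both the specialization and the $\chi$ computation by bounding the image of the restriction map $\rho$ directly, via the surjective two-point evaluation $\varepsilon$ and the constraint subspaces $W_L$, $W_R$; your key facts $h^0(\Oo_L(1,d))=2$ and $h^0(\Oo_{j(L)}(1,d))=d+1$ are exactly the dual formulation of the paper's B\'ezout step, so the numerology agrees for the same reason. What your version buys is self-containedness (the only cohomological input is $h^1(\Oo_\FF(1,d))=0$) and a transparent identification of where each summand of the bound comes from; the paper's version is terser and its point-dropping formulation is the one reused elsewhere (e.g.\ Remark \ref{n2} adapts it). Two facts you invoke are correct but deserve a line of justification: every $C_i$ meets $j(L)$ because each twistor fiber is $j$-invariant (Remark \ref{LjR}), and $L\cap j(L)=\emptyset$ (hence $p_i\ne q_i$) because, writing $L=\pi_2^{-1}(q)$, one has $j(L)=\pi_1^{-1}(\bar q)$, so a common point would be $(\bar q,q)\in\FF$, forcing $q\bar q=0$, which is impossible.
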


\begin{proof}
%
%
Let $C_1,\dots ,C_n$ be the connected component of $A$.  Let $S_i\subset C_i$ be an arbitrary union of $d+2$ {distinct} points on each conic and $S:= S_1\cup \cdots \cup S_n$. Since $C_i$ is a smooth rational curve, the restriction map $H^0(\Oo_{C_i}(1,d))\to H^0(\Oo_{S_i}(1,d))$
is bijective. So the restriction map $H^0(\Oo_A(1,d))\to H^0(\Oo_S(1,d))$ is bijective and {$\chi(\Oo_A(1,d))=\chi(\Oo_S(1,d))$. Thus we get $\chi(\Ii_{A}(1,d))=\chi(\Ii_{S}(1,d))$. }

{
Since $A\in \Tt(n,L)$, we know that there exists $L$ of bidegree $(1,0)$ which intersects every conic in $A$ (and the $(0,1)$ curve $j(L)$ does the same). So we can choose $S$ such that
 $n$ points are on $L$ and $n$ points are on $j(L)$. In other words we assume $(A\cap L) \cup (A\cap j(L))\subseteq S$.
Using B\'ezout and the fact that the bidegree is $(1,d)$, we get that $(n-2) +\max \{0,n-(d+1)\}$ of these points can be omitted without changing the set $|\Ii_{S}(1,d)|$, and hence $H^0(\Ii_{S}(1,d))$. It follows that $h^1(\Ii _A(1,d)) \ge n-2 +\max \{0,n-(d+1)\}$.}
\end{proof}

\begin{remark}\label{remmmm}
Thanks to the previous lemma, if $A\in\Tt(3,L)$, for some bidegree $(1,0)$ curve $L$,
then $h^1(\Ii _A(1,1)) \ge 2$, and hence $h^0(\Ii _A(1,1)) \ge 1$. However, since no surface of bidegree $(1,0)$ or $(0,1)$ contains an element of $\Tt(2)$, then every $S\in|\Ii_{A}(1,1)|$
is irreducible. Hence, 
Proposition~\ref{c0}  gives that $|\Ii_A(1,1)|=\{S\}$ and so, thanks to Formula~\eqref{p1}, $h^1(\Ii_A(1,1)) =2$.
\end{remark}

{
Finally, the following result completes the proof of Theorem \ref{new-thm}, in the case of $d+2$ twistor fibers.
}


\begin{theorem}\label{nok1}
Fix an integer $d\ge 2$ and take a general $A\in \Tt(d+2,L)$. Then $h^0(\Ii _A(1,d)) \ge d$ and the general $S\in |\Ii_A(1,d)|$ is irreducible.
\end{theorem}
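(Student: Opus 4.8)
The plan is to first establish the cohomological lower bound $h^0(\Ii_A(1,d)) \ge d$, and then separately show that the generic member of the linear system is irreducible. For the first part, I would apply Lemma~\ref{no2} with $n = d+2$. Since $n = d+2 > d+1$, the maximum term activates and gives $h^1(\Ii_A(1,d)) \ge (d+2)-2 + \max\{0, (d+2)-(d+1)\} = d + 1$. Combining this with the Euler characteristic computation from Formula~\eqref{p1}, namely $\chi(\Ii_A(1,d)) = (d+1)(d+3) - (d+2)^2 = -1$, I obtain $h^0(\Ii_A(1,d)) = \chi(\Ii_A(1,d)) + h^1(\Ii_A(1,d)) \ge -1 + (d+1) = d$. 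This yields the stated bound directly, so the cohomological half is essentially a two-line deduction from the preceding lemma.

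The second and harder part is proving irreducibility of the general $S \in |\Ii_A(1,d)|$. The strategy I would pursue is to enumerate the possible reducible decompositions of a bidegree $(1,d)$ surface and show that, for general $A \in \Tt(d+2,L)$, the surfaces forced to split form a proper closed subset of the linear system, hence do not cover it. By Remark~\ref{m3.1}, the only components that can absorb twistor fibers (which are smooth conics, of bidegree $(1,1)$) are those of bidegree $(1,0)$ or $(0,1)$, since an irreducible surface of bidegree $(0,k)$ with $k \ge 2$ contains no smooth conic. Thus any reducible member must take the form $Y \cup W$ with $Y$ of bidegree $(0,1)$ and $W$ of bidegree $(1,d-1)$, or a further splitting thereof. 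For each connected component $C_i$ of $A$, the unique surface $Y_i \in |\Ii_{C_i}(0,1)|$ gives a sublinear-system of reducible surfaces $Y_i \cup W$ with $W \in |\Ii_{B_i}(1,d-1)|$ where $B_i = A \setminus C_i$. I would then bound the dimension of each such family and verify it has positive codimension in $|\Ii_A(1,d)|$.

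The main obstacle will be controlling these codimension counts, because $A \in \Tt(d+2,L)$ is a \emph{special} (collinear) configuration rather than a generic one, so the clean cohomology vanishing used in the proof of Theorem~\ref{u5} is no longer available; indeed $h^1(\Ii_A(1,d)) \ge d+1 > 0$ here. Concretely, for the splitting $Y_i \cup W$ I must compute $h^0(\Ii_{B_i}(1,d-1))$ for $B_i \in \Tt(d+1, L)$, again via Lemma~\ref{no2} (now with $n = d+1$, giving $h^1 \ge d-1$ and $h^0 \ge d-1$), and check that the resulting family of reducible surfaces, of dimension roughly $h^0(\Ii_{B_i}(1,d-1)) - 1$ plus the single choice of $C_i$ among $d+2$ components, has dimension strictly less than $h^0(\Ii_A(1,d)) - 1 \ge d-1$. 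Since there are only finitely many ($d+2$) choices of $Y_i$, the reducible locus is a finite union of proper linear subspaces provided each strict codimension inequality holds, so it cannot fill the whole system.

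To finish, I would argue that no other decomposition type can dominate the system: any splitting involving a bidegree $(0,x)$ factor with $x \ge 2$ reduces, by Remark~\ref{m3.1} and peeling off bidegree $(0,1)$ pieces one at a time, to the already-excluded case of a $(1,d-1)$ component together with extra $(0,1)$ components, which sits inside the reducible families just bounded. Hence the complement of the reducible locus is a nonempty Zariski-open subset of $|\Ii_A(1,d)| = \PP^{h^0(\Ii_A(1,d))-1}$, and its general element is irreducible, completing the proof. The delicate point throughout is that the bound $h^0(\Ii_A(1,d)) \ge d$ is only an inequality (the exact value depends on how special $A$ is within $\Tt(d+2,L)$), so I would phrase the codimension estimates to be robust to the true value of $h^0$, relying on the genericity of $A$ within the irreducible family $\Tt(d+2,L)$ to guarantee that the reducible sublocus stays proper.
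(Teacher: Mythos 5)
Your first half coincides with the paper's argument: Lemma~\ref{no2} with $n=d+2$ gives $h^1(\Ii_A(1,d))\ge d+1$, and since $\chi(\Ii_A(1,d))=-1$ by Formula~\eqref{p1}, one gets $h^0(\Ii_A(1,d))\ge d$. Your outline for irreducibility is also the paper's (use Remark~\ref{m3.1} to reduce every splitting to $Y_i\cup W$ with $Y_i\in|\Ii_{C_i}(0,1)|$ and $W\in|\Ii_{B_i}(1,d-1)|$, $B_i=A\setminus C_i$, then show each such family has positive codimension). The gap is at the decisive step: you propose to get the codimension from Lemma~\ref{no2}, but that lemma, combined with $\chi=-1$, yields only \emph{lower} bounds, $h^0(\Ii_A(1,d))\ge d$ and $h^0(\Ii_{B_i}(1,d-1))\ge d-1$ (note also the arithmetic slip: for $B_i\in\Tt(d+1,L)$ the lemma gives $h^1(\Ii_{B_i}(1,d-1))\ge (d+1)-2+\max\{0,1\}=d$, not $d-1$). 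Comparing a lower bound for the subfamily with a lower bound for the ambient system proves nothing: what you need is the strict inequality $h^0(\Ii_{B_i}(1,d-1))<h^0(\Ii_A(1,d))$, and since $A$ is collinear, the $h^1$-vanishing used in Theorem~\ref{u5} to compute both sides exactly is unavailable, as you observe yourself. Invoking ``genericity of $A$ in $\Tt(d+2,L)$'' is not by itself a mechanism for producing that strict inequality. Only the case $d=2$ is rescued by your count, because there Remark~\ref{remmmm} gives the exact value $h^0(\Ii_{A\setminus C}(1,1))=1$, so the reducible members are finitely many points in a system of dimension at least $1$; this is in fact exactly how the paper treats $d=2$.

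For $d\ge 3$ the paper replaces the dimension count by a base-locus argument. The residual sequence~\eqref{successione} always gives $h^0(\Ii_{B_i}(1,d-1))\le h^0(\Ii_A(1,d))$, with equality if and only if $Y_i$ is a fixed divisorial component of $|\Ii_A(1,d)|$, i.e.\ every member of the system contains $Y_i$. Genericity of $A$ is then used for one precise purpose: it makes the relevant $h^0$ independent of which components are removed, so if equality held for one index it would hold for all, and the base locus would contain $Y_1\cup\cdots\cup Y_{d+2}$ (pairwise distinct by Remark~\ref{postc02}), a divisor of bidegree $(0,d+2)$ sitting inside surfaces of bidegree $(1,d)$ --- impossible. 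The paper runs this strict inequality at every level of the peeling (for unions $E$ of $x$ components, $h^0(\Ii_E(1,x-2))<h^0(\Ii_{E\cup C}(1,x-1))$), precisely because none of the intermediate $h^0$'s can be computed for these special collinear configurations. This base-locus mechanism is the missing ingredient that your codimension estimates cannot supply.
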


\begin{proof}
Applying Lemma \ref{no2} with $n=d+2$, we get $h^1(\Ii_A(1,d)) \ge d+1$. Since $\chi(\Ii_{A}(1,d))=(d+1)(d+3)-(d+2)^{2}=-1$, we get $h^0(\Ii_A(1,d))\ge d$ and hence, $|\Ii_A(1,d)|\neq \emptyset$.  
 
Now we prove that a general element in $|\Ii_A(1,d)|$ is irreducible.
Take $S\in |\Ii_A(1,d)|$. 
Every surface of bidegree $(1,0)$ or $(0,1)$ contains at most one connected component of $A$. Therefore, $S$ cannot be the union of a surface of 
bidegree $(1,0)$ and $d$ of bidegree $(0,1)$.
No irreducible and reduced surface of bidegree $(0,x)$, for $x\ge 2$, contains a twistor fiber.
If $d=2$, then $h^0(\Ii_A(1,2))\ge 2$. However, thanks to Remark~\ref{remmmm}, for any choice of $C\in A$ there is only
one $M\in|\Ii_{(A\setminus C)}(1,1)|$. {Since we have considered all the possible reducible elements of $|\Ii_A(1,2)|$, we get the thesis.
}

Now suppose $d>2$. Since $A$ has finitely many components, it suffices to prove that
for any $x\in \{3,\dots ,d-1\}$, any union $E$ of $x$ connected components of $A$, and any connected component $C$ of $A\setminus E$ we have 
\begin{equation}\label{ine}
h^0(\Ii_E(1,x-2)) < h^0(\Ii_{E\cup C}(1,x-1)),\end{equation}
and then proceed as in the proof of Theorem~\ref{u5}.
%
%

Let $Y$ be the only element of $|\Oo_\FF(0,1)|$ that contains $C$. The exact sequence in Formula~\eqref{successione} gives $h^0(\Ii_E(1,x-2)) \le  h^0(\Ii_{E\cup C}(1,x-1))$ and equality holds if and only if $Y$ is in the base locus $\Bb$ of $|\Ii_{E\cup C}(1,x-1)|$. Call $C_1,\dots ,C_x$ the components of $E$ and $Y_i$ the only surface of bidegree $(0,1)$ containing $C_i$. By Remark~\ref{postc02} the irreducible surfaces $Y,Y_1,\dots ,Y_x$ are all different from each other.
For a general $A$ we get that every integer  $h^0(\Ii_E(1,x-2))$ is the same for every union of $x$ connected components of $A$. So if the inequality is false, then $\Bb$ contains the surface $Y\cup Y_1\cup \cdots \cup Y_x$ of bidegree $(0,x+1)$, which is a contradiction.
\end{proof}

\section{Surfaces of bidegree $(1,2)$ and $(1,3)$}
\label{sez-4}
In this section we specialize our study to the case of surfaces of bidegree $(1,2)$ and $(1,3)$.
In particular, we will prove Theorems~\ref{i2i1}, \ref{n6} and~\ref{n7}.

{Recall, from Formula~\eqref{p1}, that
for each  $A\in\Cc(n)$,
we have $\chi(\Ii_A(1,2)) =15-4n,$}
and hence, if $n\le 3$, we get $h^0(\Ii_A(1,2))> 0$. 
We also recall that a general $S\in |\Oo_{\FF}(1,2)|$ contains  a finite number of smooth conics and, thanks 
to Theorem~\ref{c01}, for every $B\in \Cc(2)$ we have $h^1(\Ii_B(1,1)) =0$.

%

%
%

\subsection{Surfaces of bidegree $(1,2)$ containing $0\le n\le 4$ twistor fibers}
In this section, we show the existence of a smooth surface of bidegree $(1,2)$ containing exactly
$0,1,2,3$ or $4$ twistor fibers. 
In order to analyze the space $|\Ii_A(1,2)|$ when $A$ is in $\Cc(n)$ (or in $\Tt(n)$), for $0\le n\le 4$,
we need some preliminary results. Note that the extremal case, when $n=4$, is treated differently.

We start by considering  $(1,2)$-surfaces containing {three disjoint smooth conics}. 

\begin{proposition}\label{n3}
Take $A\in \Cc(3)$ such that $h^0(\Ii_A(1,1)) >0$. Then 
\begin{enumerate}
\item there exists a curve $L$ of bidegree $(1,0)$ and a curve $R$ of bidegree $(0,1)$ such that $A\in\Cc(3,L)$ and $A\in\Cc(3,R)$ 
\item
 there is an irreducible element in $ |\Ii_A(1,2)|$;
\item $h^1(\Ii _A(1,2))=1$;
\item the base locus $\Bb$ of $|\Ii_A(1,2)|$ is $A\cup L\cup R$, where $L$ and  $R$ are the curves defined in $\textit{(1)}$.
\end{enumerate}
\end{proposition}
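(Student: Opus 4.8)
The plan is to establish \textit{(1)} first, then compute the cohomology in \textit{(3)}, and finally deduce \textit{(2)} and \textit{(4)} from the resulting value $h^0(\Ii_A(1,2))=4$, since that dimension count drives the last two parts. For \textit{(1)}, I would pick $M\in|\Ii_A(1,1)|$ and argue it is irreducible: a reducible bidegree $(1,1)$ surface is $X\cup Y$ with $X,Y$ of bidegree $(1,0),(0,1)$, and by Remark~\ref{m3} each of $X,Y$ contains at most one smooth conic, so $X\cup Y$ cannot contain the three disjoint conics of $A$. Hence $M$ is a smooth degree-$6$ del Pezzo surface with $\mathrm{Pic}(M)=\ZZ H\oplus\ZZ E_1\oplus \ZZ E_2\oplus\ZZ E_3$ via the blow-down $\pi_1$; a direct computation with $-K_M=\Oo_M(1,1)$ gives $\Oo_M(1,0)=H$ and $\Oo_M(0,1)=2H-E_1-E_2-E_3$. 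A smooth conic $C\subset M$ is a rational curve with $-K_M\cdot C=2$ and, by adjunction, $C^2=0$; solving these numerical conditions shows the only such classes are $H-E_1,H-E_2,H-E_3$. Since $(H-E_i)\cdot(H-E_j)=1$ for $i\ne j$, conics in different classes meet, so the three disjoint conics of $A$ lie in a single pencil, say $|H-E_1|$. Then $L:=H-E_2-E_3$ (bidegree $(1,0)$) and $R:=E_1$ (bidegree $(0,1)$) satisfy $(H-E_1)\cdot L=(H-E_1)\cdot R=1$, so each meets all three conics, i.e. $A\in\Cc(3,L)\cap\Cc(3,R)$.

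For \textit{(3)}, I would run the residual sequence \eqref{successione} along the unique $Y\in|\Ii_{C_1}(0,1)|$ with $B:=C_2\cup C_3$, giving $\Ii_{(B\cap Y)\cup C_1,Y}(1,2)\cong\Ii_{B\cap Y,F_1}(2f)$ under \eqref{coeff}. The key point is that collinearity forces the two residual points $B\cap Y$ to lie on the single fiber $L\subset Y$ (indeed $L\cdot Y=0$ and $L$ meets $C_1\subset Y$, so $L\subset Y$), whence $h^0(\Ii_{B\cap Y,F_1}(2f))=2$. Combined with $h^0(\Ii_B(1,1))=2$ and $h^1(\Ii_B(1,1))=0$ from Corollary~\ref{cor1}, the sequence yields $h^0(\Ii_A(1,2))=4$, and since $\chi(\Ii_A(1,2))=15-12=3$ by \eqref{p1} we obtain $h^1(\Ii_A(1,2))=1$.

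For \textit{(2)}, with $|\Ii_A(1,2)|=\PP^3$ I would classify reducible members by the unique irreducible component carrying the full first bidegree, which has bidegree $(1,j)$ with remainder of bidegree $(0,2-j)$. If $j=0$ the surface is $X\cup D$ with $D$ of bidegree $(0,2)$, which by Remarks~\ref{m3} and~\ref{m3.1} contains at most one conic, hence cannot contain $A$. If $j=1$ the $(1,1)$ component must contain at least two of the three conics, so it varies in a family of dimension at most $2$ (either $M$ itself with a free $(0,1)$ surface, or an element of one of the three pencils $|\Ii_{C_i\cup C_j}(1,1)|$, each of dimension $1$). Thus the reducible members form a proper closed subset of $\PP^3$ and the general member is irreducible.

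For \textit{(4)}, clearly $A\subseteq\Bb$. By Remark~\ref{m0000} and B\'ezout, $L$ meets $A$ in three points while a bidegree $(1,0)$ curve not contained in $S$ meets $S$ in one point, so $L\subset S$ for every $S$; the analogous count ($R\cdot\Oo_\FF(1,2)=2$ against three points of $A$) forces $R\subset S$, giving $L\cup R\subseteq\Bb$. Next, the members $M\cup Y'$ with $Y'$ ranging over the base-point-free system $|\Oo_\FF(0,1)|$ intersect exactly in $M$, so $\Bb\subseteq M$. Restricting to $M$, the computation $\Oo_M(1,2)=5H-2E_1-2E_2-2E_3$ and $[A]+[L]+[R]=4H-2E_1-E_2-E_3$ leaves residual class $H-E_2-E_3$, which is the rigid $(-1)$-curve $L$; hence every $S\cap M$ with $M\not\subseteq S$ equals the fixed divisor $A+2L+R$, whose support is $A\cup L\cup R$, and combining with $\Bb\subseteq M$ gives $\Bb=A\cup L\cup R$. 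I expect this last part to be the main obstacle, precisely because upgrading the easy inclusion $A\cup L\cup R\subseteq\Bb$ to an equality requires both the containment $\Bb\subseteq M$ and the rigidity of the residual system on $M$; the del Pezzo classification of conic classes in \textit{(1)} is the conceptual heart on which everything else rests.
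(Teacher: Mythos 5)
There is a genuine gap at the foundation of your argument: the step ``Hence $M$ is a smooth degree-$6$ del Pezzo surface''. In $\FF$, irreducibility of a bidegree $(1,1)$ surface does \emph{not} imply smoothness, and the hypotheses of the Proposition do not force smoothness either. Concretely, a $(1,1)$ surface is $S_N=\{(p,\ell)\in\FF : pN\ell=0\}$ for a $3\times 3$ matrix $N$ (defined modulo $\CC\cdot\mathrm{Id}$); it is reducible exactly when some $N+c\,\mathrm{Id}$ has rank one, but it is singular as soon as $N$ has a non-semisimple eigenvalue, so a matrix with a nontrivial Jordan block gives an irreducible surface with a singular point. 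Such surfaces do occur under your hypotheses: take $pN\ell=p_0\ell_1+\mu p_2\ell_2$ with $\mu\neq 0$, and for $t\in\CC$ set $C_t:=L_{q_t,m_t}$ with $m_t=[t:1:0]$, $q_t=[-\mu:\mu t+1:0]$ (so $q_tm_t=1\neq 0$ and $C_t$ is a smooth conic). Using $p_1=-tp_0$ and $p_2\ell_2=-p_0\ell_0-p_1\ell_1$ one finds $pN\ell=p_0\,(q_t\ell)$ on the locus $pm_t=p\ell=0$, hence $C_t\subset S_N$ for every $t$; moreover for $t\neq t'$ the curves $C_t,C_{t'}$ are disjoint, since the only candidate intersection point has $p\propto[0:0:1]$ and $\ell\propto[0:0:1]$, which violates $p\ell=0$. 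Thus $A_0:=C_0\cup C_1\cup C_2\in\Cc(3)$ satisfies $h^0(\Ii_{A_0}(1,1))>0$, while the unique element of $|\Ii_{A_0}(1,1)|$ (unique by Proposition~\ref{c0}) is $S_N$, which is singular at $([0:1:0],[1:0:0])$. So your claim is not merely unjustified but false, and your proofs of \textit{(1)} and \textit{(4)} --- which rest entirely on $\mathrm{Pic}(M)=\ZZ H\oplus\ZZ E_1\oplus\ZZ E_2\oplus\ZZ E_3$, the classification of conic classes as $H-E_i$, and the rigidity of the residual class $H-E_2-E_3$ --- do not cover this case: for singular $M$ the class group is smaller and none of these computations applies as stated.

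By contrast, the paper never invokes any property of $M$ beyond its existence and irreducibility: \textit{(1)} is proved by a residual exact sequence on the surface $X\in|\Ii_C(1,0)|$ through one component $C$ of $A$, and \textit{(4)} by applying Proposition~\ref{ee1} to the pairs $B_i=A\setminus C_i$ and using the reducible surfaces $S_i\cup Y_i$; the only surfaces whose internal geometry is used are of bidegree $(1,0)$ or $(0,1)$, and those are always smooth Hirzebruch surfaces $F_1$, so the smoothness issue never arises. Your parts \textit{(2)} and \textit{(3)} are essentially sound once the \emph{conclusion} of \textit{(1)} is granted (which is true, by the paper's argument) --- indeed your direct computation $h^0(\Ii_{B\cap Y,F_1}(2f))=2$, giving $h^0(\Ii_A(1,2))=4$ and then $h^1=1$ from $\chi=3$, is a clean variant of the paper's two-sided estimate. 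To repair the proposal you would need either the paper's route for \textit{(1)} and \textit{(4)}, or a separate treatment of singular irreducible $(1,1)$ surfaces (e.g.\ on a minimal resolution), which is a substantially different argument.
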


\begin{proof}
We start arguing as in Remark~\ref{remmmm}.
Thanks to Remark~\ref{m3}, every surface of bidegree $(1,0)$ or $(0,1)$ does not contain an element of $\Cc(2)$, so as $A\in\Cc(3)$, every element in 
$|\Ii_A(1,1)|$ is irreducible.
Proposition~\ref{c0} {(with $a=b=c=d=1$)} gives  that $h^0(\Ii_A(1,1)) =1$ and hence we set $|\Ii_A(1,1)|=\{M\}$ and by Formula~\eqref{p1} we compute $h^1(\Ii_A(1,1)) =2$.

We now prove the first statement.
Let $C$ be a connected component of $A$, set $B:=A\setminus C$ and denote by $X$ the only element of $|\Ii_C(1,0)|$. Performing the same construction that leads to Formula~\eqref{successione}, we have
$$
0\to\Ii_{B}(0,1)\to\Ii_{A}(1,1)\to\Ii_{A\cap X,X}(1,1)\to 0.
$$
Since  $h^0(\Ii_{B}(0,1)) =0$ and $h^0(\Ii_A(1,1)) =1$, the previous residual exact sequence gives 
\begin{equation}\label{formulage1}
h^0(\Ii_{A\cap X,X}(1,1)) \ge1.
\end{equation}
Thanks to Formula~\eqref{Hirz}, we have that $\Oo_X(1,1)\simeq\Oo_{F_1}(h+2f)$; moreover, recall from Remark~\ref{m3} that $C$ is identified with an element of $|\Oo_{F_1}(h+f)|$. 
Since $A\cap X$ is the union of $C$ and the two points $ B\cap X$, there exists a fiber $L\in |f|$ of the ruling of $F_1$ containing $ B\cap X$. 
Since $f (h+f) =1$, we have that $L$ meets $C$. So $L$ hits every connected component of $A$. If we take instead of $X$ the only element of $|\Ii_C(0,1)|$, we get the existence of $R$.

We now prove \textit{(2)}. We will show that there is an irreducible element in $ |\Ii_A(1,2)|$ by showing that the possible reducible cases
do not cover the entire family.
Remark~\ref{m3.1} shows that $A$ is not contained in a surface of bidegree $(1,2)$ with an irreducible component of
bidegree $(0,2)$. By Remark~\ref{m3}, a bidegree $(0,1)$
or $(0,1)$-surface does not contain any element of $\Cc(n)$, with $n\ge2$.
Thus, there are only finitely many elements of  $|\Oo_\FF(1,2)|$ with at least $3$ irreducible components.

Since $h^0(\Oo_{\FF}(0,1)) =3$,
the set of all reducible elements of $ |\Oo_{\FF}(1,2)|$ with an irreducible component of bidegree $(1,1)$ containing $A$ is isomorphic to $\PP^2$. Thus, in order to prove the existence of an irreducible element in $ |\Ii_A(1,2)|$, it is sufficient to show that $h^0(\Ii_A(1,2)) \ge 4$. 
But, using the exact sequence \eqref{sequenza}, this is equivalent to proving that $h^1(\Ii_A(1,{2})) \ge 1$, and the last inequality is true, {by Theorem \ref{c01}} because $\#(L\cap A)=3$. 

To prove \textit{(3)}, i.e. $h^1(\Ii_A(1,2))=1$, it is sufficient to prove that $h^1(\Ii _A(1,2))\le 1$. 
As before, take a connected component $C$ of $A$ and set $B= A\setminus C$. Let $Y$ be the only element of $|\Ii_C(0,1)|$. In the identification~\eqref{coeff} of $Y$ with $F_1$ we have
$$\Ii_{(B\cap Y)\cup C,Y}(1,2) \cong \Ii_{(B\cap Y)\cup C,F_1}(h+3f)\cong \Oo_{(B\cap Y),F_1}(2f).$$ Since $\#(B\cap Y)=2$ and $\Oo_{F_1}(2f)$ is globally generated, $h^1(F_1,\Ii_{B\cap Y,F_1}(2f)) \le 1$. We have $\Res_Y(A)=B$. Thanks to Theorem~\ref{c01}, we have $h^1(\Ii _B(1,1)) =0$, and the residual exact sequence of $Y$
$$
0\to\Ii_{B}(1,1)\to\Ii_{A}(1,2)\to\Ii_{(B\cap Y)\cup C,Y}(1,2)\to0,
$$
gives $h^1(\Ii_A(1,2))\le 1$. 

Finally, we discuss the base locus of $|\Ii_{A}(1,2)|$ {in order to} prove \textit{(4)}.
First of all, for any surface $S\in|\Ii_{A}(1,2)|$, we clearly have $A\subset S$. Moreover, since $\#(L\cap A)=3$ and $\#(R\cap A)=3$, then by B\'ezout, both curves are contained in $S$: in fact, thanks to Remark~\ref{rem1001} and  Formula~\eqref{tablechow}, the general intersection between a curve
of bidegree $(1,0)$ and $S$ consists of one point while the intersection of
a curve of bidegree $(0,1)$ and $S$ consists of two points (see also Remark~\ref{m0000}). Therefore,
$L\cup R\subset S$ and $A\cup L\cup R\subset\Bb$.

We now prove that $\Bb\subset A\cup L\cup R$. Fix $p\in \Bb\setminus (A\cup L\cup R)$. Take a connected component $C_i$, $i=1,2,3$, of $A$ and set $B_i:= A\setminus C_i$. Let $Y_i$ be the only element of $|\Oo_{\FF}(0,1)|$ containing $C_i$. By Proposition \ref{ee1} $B_i\cup L\cup R$ is the base locus of $|\Ii_{B_i}(1,1)|$. So there exists $S_i\in |\Ii_{B_i}(1,1)|$ such that $p\notin S_i$. If $p\notin Y_i$, then $p\notin \Bb$. Since $S_1\cap S_2\cap S_3 =L\cup R$, we can take $i\in \{1,2,3\}$  such that $p\notin Y_i$. So $\Bb =A\cup L\cup R$.
%
\end{proof}

{The following remark shows that if $A\in \Cc(3)$
satisfies  condition \textit{(1)} of Theorem \ref{n3}, then the existence of a $(1,1)$-surface containing $A$ is guaranteed. In particular
there exists a $(1,1)$-surface containing arbitrary triplets of collinear twistor fibers.}

\begin{remark}\label{n2}
Take $A\in \Cc(3)$ and assume the existence of curves $L$ of bidegree $(1,0)$ and $R$ of bidegree $(0,1)$ intersecting every connected component of $A$.
By adapting the proof of Lemma \ref{no2},
since $\#(L\cap A)= 3$ and $\#(R\cap A)= 3$,  
we have that $h^1(\Ii_A(1,1)) \ge 2$.
Thus $h^0(\Ii_A(1,1)) \ge 1$ and $A$ satisfies the assumptions of Proposition \ref{n3}.

We can be even more specific and say that if $A\in \Cc(3)$ (with no assumption about $L$ or $R$), then  $h^0(\Ii_A(1,1)) \le 1$   and if $|\Ii_A(1,1)|\ne \emptyset$, then the only element of $|\Ii_A(1,1)|$ is irreducible. This is true because, thanks to Remark \ref{m3}, every reducible element of $|\Oo_{\FF}(1,1)|$ contains at most $2$ {disjoint smooth conics}.

Note that if $A\in \Tt(3)$ and $L$ exists, then we can take $R:= j(L)$.  
So if $A\in \Tt(3)$ to get $h^0(\Ii_A(1,1)) >0$, it is sufficient to assume that $A\notin \Tt^*(3)$.
\end{remark}

The following lemma is a sort of inverse of the previous remark.
\begin{lemma}\label{u7.1}
Take $A\in \Cc^*(3)$. Then $h^0(\Ii_A(1,1))=0$ and $h^1(\Ii_A(1,1))=1$.
\end{lemma}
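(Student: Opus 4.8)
The plan is to reduce the two assertions to a single one via the Euler characteristic, and then to read off the required vanishing from Proposition~\ref{n3}. Since $A\in\Cc^*(3)\subset\Cc(3)$, Formula~\eqref{p1} with $n=3$, $d=1$ gives $\chi(\Ii_A(1,1))=(1+1)(1+3)-3(1+2)=-1$, so $h^0(\Ii_A(1,1))-h^1(\Ii_A(1,1))=-1$. Hence it is enough to prove $h^0(\Ii_A(1,1))=0$; this will force $h^1(\Ii_A(1,1))=1$ at once.

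First I would argue the vanishing by contradiction, as the contrapositive of Proposition~\ref{n3}(1). Suppose $h^0(\Ii_A(1,1))>0$. Then $A$ meets the hypothesis of Proposition~\ref{n3}, so part (1) provides a bidegree $(1,0)$ curve $L$ with $A\in\Cc(3,L)$, i.e. every connected component of $A$ meets $L$, whence $\#(L\cap A)=3$. This contradicts the defining property of $\Cc^*(3)$ in Definition~\ref{definitionC*}, which demands $\#(L\cap A)\le 2$ for all bidegree $(1,0)$ curves $L$. Therefore $h^0(\Ii_A(1,1))=0$, and the statement follows.

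If one prefers a self-contained argument in the spirit of Theorem~\ref{c01} and Lemma~\ref{ii1}, I would instead fix a connected component $C$ of $A$, set $B:=A\setminus C\in\Cc(2)$, let $Y$ be the unique $(0,1)$-surface through $C$, and use the residual sequence~\eqref{successione} with $(a,b)=(1,1)$. Since $h^0(\Ii_B(1,0))=0$ by Remark~\ref{m3}, one gets $h^0(\Ii_A(1,1))\le h^0(\Ii_{B\cap Y,F_1}(f))$ after the identification $Y\cong F_1$ of Formula~\eqref{coeff}, under which $\Oo_Y(1,1)\cong\Oo_{F_1}(h+2f)$, the conic $C$ is a curve of class $h+f$, and $B\cap Y$ is a pair of distinct points. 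The hard part, and the only real content, will be to show that these two points do not lie on a common fiber $L\in|f|$: if they did, then $L$ would be a bidegree $(1,0)$ curve contained in $Y$ meeting both components of $B$, and also meeting $C$ (because $f\cdot(h+f)=1$), so that $\#(L\cap A)\ge 3$, again contradicting $A\in\Cc^*(3)$. With the two points on distinct fibers, $h^0(\Ii_{B\cap Y,F_1}(f))=0$, so $h^0(\Ii_A(1,1))=0$, as desired.
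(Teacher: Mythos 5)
Your proof is correct; both of your arguments are sound and non-circular, but neither coincides with the paper's own proof. Like you, the paper first reduces to showing $h^0(\Ii_A(1,1))=0$ via $\chi(\Ii_A(1,1))=-1$ from Formula~\eqref{p1}, but it does not invoke Proposition~\ref{n3} (even though that proposition precedes the lemma and its proof is independent of it, so your contrapositive reading is legitimate). Instead the paper argues with a different decomposition: writing $A=B\cup C$ with $B$ a union of \emph{two} components, it takes a general irreducible $M\in|\Ii_B(1,1)|$ --- using Theorem~\ref{c01} and Proposition~\ref{ee1} to get $h^0(\Ii_B(1,1))=2$ and that the base locus of $|\Ii_B(1,1)|$ is $B\cup L\cup R$ --- residuates with respect to $M$, and concludes that the degree-$2$ scheme $C\cap M$ lies in $L\cup R$; since $\deg(C\cap T)\le 1$ for any bidegree $(0,1)$ curve $T$, this forces $C\cap L\ne\emptyset$, i.e. $A\notin\Cc^*(3)$. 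Your first route compresses everything into the contrapositive of Proposition~\ref{n3}(1), whose own proof (residuation along the $(1,0)$-surface through one component) carries the geometric content; your second route residuates along the $(0,1)$-surface through one component, which is exactly the technique of Theorem~\ref{c01} and Lemma~\ref{ii1} --- note that Lemma~\ref{ii1} with $d=1$ together with Formula~\eqref{p1} already yields the lemma verbatim. What your approach buys is brevity and reuse of results already established; what the paper's proof buys is independence from Proposition~\ref{n3}, so the lemma rests only on the Section~\ref{prelimi} material.
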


%

\begin{proof}
If $A\in\Cc(3)$, thanks to Formula~\eqref{p1}, $\chi(\Ii_A(1,1))=-1$.
So $h^0(\Ii_A(1,1))=0$ if and only if $h^1(\Ii_A(1,1))=1$. 
We assume that $h^0(\Ii _A(1,1)) \ne 0$ and will prove that $A\notin\Cc^{*}(3)$. Let $B\subset A$ be the union of $2$ connected components of $A$  and set $C:= A\setminus B$. Let $L$ and $R$ be the curves defined in Remark~\ref{LR} for $B\in \Cc(2)$. 
Take any element $D\in|\Ii_{B}(1,1)|$.
Since $\#(B\cap (L\cup R))=2$, $B\subset D$, and $D$ has bidegree $(1,1)$, then B\'ezout theorem implies $L\cup R\subset D$. 

By Theorem~\ref{c01} and Proposition~\ref{ee1}, we have $h^1(\Ii_B(1,1)) =0$, $h^0(\Ii_B(1,1)) =2$,
and the general element $M$ in $|\Ii_B(1,1)|$ is irreducible.
Since $h^0(\Ii_B(1,1)) =2$ and $M$ is general, $C\nsubseteq M$. Consider the following residual exact sequence:
\begin{equation}\label{equ8.1}
0 \to \Ii_C\to \Ii_A(1,1) \to \Ii_{B\cup (M\cap C),M}(1,1)\to 0
\end{equation}

Since $M\in |\Ii_B(1,1)|$ and $h^1(\Oo_{\FF})=0$, the exact sequence
$$0 \to \Oo_{\FF} \to \Ii _B(1,1)\to \Ii_{B,M}(1,1)\to 0$$
gives $h^0(M,\Ii_{B,M}(1,1)) =1$. Moreover, $h^{0}(\Ii_{C})=0$, and so the sequence~\eqref{equ8.1} and the assumption $h^0(\Ii_A(1,1)) \ge 1$ imply $h^0(M,\Ii_{B\cup (M\cap C),M}(1,1))\ge 1$. By Proposition~\ref{ee1}, the curve $A\cup L\cup R$ is
the base locus of $|\Ii_B(1,1)|$ and thus the base locus of $H^0(M,\Ii_{B,M}(1,1))$ is the curve $B\cup L\cup R$.
Since $B\cap C=\emptyset$, the degree $2$ scheme $C\cap M$ is contained in $L\cup R$. To get $A\notin \Cc^*(3)$, we must to prove that $C\cap L \ne  \emptyset$. It is sufficient to observe that $\deg (C\cap T)\le 1$ for any curve $T$ of bidegree $(0,1)$. {This is true because of  Remark~\ref{rem1001} and the fact that $C$ is the intersection of a surface of bidegree $(1,0)$ and a surface of bidegree $(0,1)$.}
\end{proof}


We now discuss the case of $A\in \Cc(2)$ contained in a smooth surface of bidegree $(1,1)$. In this case we will also prove smoothness for the general element in $ |\Ii_A(1,2)|$.

\begin{proposition}\label{ee2}
Take any $A\in \Cc(2)$ that is contained in a smooth element of $|\Oo_{\FF}(1,1)|$. Then we have
\begin{enumerate}
\item $h^1(\Ii _A(1,2)) =0$ and $h^0(\Ii_A(1,2))=7$;
\item the set $A\cup L$ is contained in the base locus $\Bb$ of $|\Ii_A(1,2)|$, where $L$ is the bidegree $(1,0)$ curve described in Remark~\ref{LR};
\item a general $S\in |\Ii_A(1,2)|$ is smooth.
\end{enumerate}
\end{proposition}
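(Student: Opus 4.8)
The plan is to dispose of (1) and (2) immediately and reserve the real work for (3). For (1), observe that $A\in\Cc(2)=\Cc^*(2)$ and $2\le d+1=3$, so Corollary~\ref{cor1} gives $h^1(\Ii_A(1,2))=0$, and then $h^0(\Ii_A(1,2))=\chi(\Ii_A(1,2))=15-4\cdot 2=7$ by Formula~\eqref{p1}; note the smoothness hypothesis on the ambient $(1,1)$-surface is not needed here. For (2), every $S\in|\Ii_A(1,2)|$ contains $A$, so $A\subset\Bb$; since $\#(L\cap A)=2$ whereas a bidegree $(1,0)$ curve meets a bidegree $(1,2)$ surface in a single point unless it is contained in it (Remark~\ref{m0000}), B\'ezout forces $L\subset S$ for every $S$, whence $A\cup L\subset\Bb$.

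For (3), write $M$ for the smooth element of $|\Ii_A(1,1)|$. The first step is to pin down the base locus: the reducible surfaces $M\cup Y$ with $Y\in|\Oo_\FF(0,1)|\cong\PP^2$ all lie in $|\Ii_A(1,2)|$ and $\bigcap_Y(M\cup Y)=M$, so $\Bb\subseteq M$. By Bertini the general $S$ is then automatically smooth off $M$, and the problem reduces to smoothness along $\Bb\subseteq M$, which contains $A\cup L$. The second step is to describe the trace system on the smooth surface $M$: multiplication by the equation $m$ of $M$ gives
\[
0\to\Oo_\FF(0,1)\xrightarrow{\ \cdot m\ }\Ii_A(1,2)\to\Ii_{A,M}(1,2)\to 0,
\]
and, since $h^1(\Oo_\FF(0,1))=0$, we obtain $h^0(\Ii_{A,M}(1,2))=7-3=4$.

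Viewing $M$ as the blow up of $\PP^2$ at three points (a degree-$6$ del Pezzo), a short computation in $\mathrm{Pic}(M)$ shows that the two disjoint conics of $A$ form a pair in one of the conic pencils $|H-E_i|$, that $L$ is the $(-1)$-curve $H-E_j-E_k$, and that the trace system equals $A+L+|2H-E_j-E_k|$, whose moving part $|2H-E_j-E_k|$ is nef, hence base-point-free on $M$. By Bertini on $M$ the residual curve $\Gamma''\in|2H-E_j-E_k|$ is smooth and meets $A\cup L$ transversally, while $\Gamma''\cdot L=0$; therefore $S\cap M=A+L+\Gamma''$ is a nodal curve whose nodes are the two fixed points $L\cap A$ together with the finitely many moving points $\Gamma''\cap A$.

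The final step is local, and this is where I expect the main obstacle to lie. Write a general member as $f=\tilde f+ym$ with $y\in H^0(\Oo_\FF(0,1))$ general and $\tilde f$ a lift of a general trace section, so that at $p\in M$ one has $df_p=d\tilde f_p+y(p)\,dm_p$. Where $S\cap M$ is smooth the tangential part of $d\tilde f_p$ is nonzero, so $df_p\neq 0$ and $S$ is smooth at $p$; but at a node of $S\cap M$ the tangential part vanishes, forcing $d\tilde f_p=\lambda\,dm_p$ and $df_p=(\lambda+y(p))\,dm_p$. The difficulty is precisely that the whole of $A\cup L$ lies in the base locus, so Bertini says nothing there, and at each node $S$ is necessarily tangent to $M$: smoothness must be extracted from the \emph{normal} derivative, i.e.\ from $y(p)\neq-\lambda$. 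I would close this by noting that the finitely many nodes impose finitely many affine-linear conditions $y(p)=-\lambda(p)$ on the three-dimensional space $H^0(\Oo_\FF(0,1))$; a general $y$ (chosen after $\tilde f$) avoids all of them, and a routine fibered genericity argument then upgrades this to the statement that the general $S\in|\Ii_A(1,2)|$ is smooth.
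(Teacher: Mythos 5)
Your proof is correct. Parts (1) and (2) coincide with the paper's argument (and you are right that smoothness of $M$ is not needed for them), but for part (3) you take a genuinely different route. The paper stays cohomological: from $\Bb\subseteq A\cup L\cup R$ (global generation of $\Oo_{\FF}(0,1)$ plus Proposition~\ref{ee1}) and Bertini it gets $\mathrm{Sing}(S)\subseteq A\cup L\cup R$ for general $S$; it disposes of $L$ by noting that $M\cup Y'$, with $Y'\in|\Oo_{\FF}(0,1)|$ general, is smooth along $L$ (as $L\cap Y'=\emptyset$); and along the one-dimensional set $A\cup R$ it counts conditions, proving via residual exact sequences that $h^0(\Ii_{A\cup 2p}(1,2))=h^0(\Ii_A(1,2))-2$ for $p\in (A\cup R)\setminus (A\cap R)$ and $h^0(\Ii_{A\cup 2p}(1,2))<h^0(\Ii_A(1,2))$ for $p\in A\cap R$, so that the surfaces singular somewhere along $A\cup R$ form a proper closed subset of $|\Ii_A(1,2)|$. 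You instead exploit the smooth surface $M$ itself: $\Bb\subseteq M$ via the pencil of reducible members $M\cup Y$; the explicit trace system $A+L+|2H-E_j-E_k|$ on the del Pezzo sextic $M$ (your lattice claims check out: disjointness forces both components of $A$ into one pencil $|H-E_i|$, the bidegree $(1,0)$ curves of $M$ are the classes $H-E_j-E_k$, and $\Oo_M(1,2)-A-L=2H-E_j-E_k$ is nef, hence base-point-free); and then a normal-derivative argument at the finitely many singular points of the trace curve. Each approach buys something: the paper's computation is robust enough to be recycled where no smooth $(1,1)$-surface through the fibers is available (Proposition~\ref{n2.1} and the case $n=1$ of Theorem~\ref{terzo-I}), while yours, tailored to the hypothesis of this proposition, yields strictly more information --- the exact base locus $\Bb=A\cup L$ (in particular $R\nsubseteq \Bb$, sharpening (2)), the structure of the curve $S\cap M$, and the correct explanation of why Bertini alone cannot finish (a general $S$ is tangent to $M$ at the nodes, so smoothness must come from the normal direction). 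Two small points you should write out in a final version: the Picard computation itself, and the openness needed in your last ``fibered genericity'' step (the locus of $S\in|\Ii_A(1,2)|$ singular at some point of $A\cup L$ is the image of a closed incidence variety under a proper projection, hence closed, so exhibiting one smooth-along-$\Bb$ member suffices); both are routine.
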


\begin{proof}
To prove \textit{(1)} it is sufficient to apply Corollary~\ref{cor1}, which gives $h^1(\Ii _A(1,2))=0$ and Formula~\eqref{p1}, which entails $h^0(\Ii_A(1,2))=7$. 

We now pass to point \textit{(2)}. 
Take $L$ and $R$ as in Remark~\ref{LR}. Since $\Oo_{\FF}(0,1)$ is globally generated, $\Bb \subseteq A\cup L\cup R$; moreover, thanks to Remark~\ref{m0000}, we also have $A\cup L\subseteq \Bb$. 

We are left to prove \textit{(3)}.
By Bertini's theorem $\mathrm{Sing}(S)\subseteq A\cup L\cup R$ for a general $S\in |\Ii_A(1,2)|$. Fix a smooth $M\in |\Ii_A(1,1)|$. Take a general $Y'\in |\Oo_{\FF}(0,1)|$. Since $Y'$ is general $L\cap Y'=\emptyset$ (and hence it is not singular at any $p\in L$). Thus, up to small deformation, we can say that $S$ (which is general) is smooth in a neighborhood of $L$. We are left to {exclude the case} $\mathrm{Sing}(S)\subseteq A\cup R$.
Fix $p\in A\cup R$ and let $2p$ be the $0$-dimensional scheme of $\FF$ defined by the ideal $\Ii^{2}_{p,\FF}$.  $S$ is singular at $p$ if and only if $S\in |\Ii_{2p\cup A}(1,2)|$. 
To complete our proof we need to prove that 
$$h^0(\Ii _{2p\cup A}(1,2)) = h^0(\Ii _A(1,2)) -2,$$
for all $p\in (A\cup R) \setminus A\cap R$ and that, for $p\in A\cap R$,
$h^0(\Ii_{2p\cup A}(1,2)) < h^0(\Ii _A(1,2))$.
These two statements give the thesis because $(A\cup R) \setminus A\cap R$ and
$A\cap R$ are $1$-dimensional and $0$-dimensional, respectively, and we are saying that
the set of bidegree $(1,2)$-surfaces containing $A$ and a singular point has
codimension $2$ in the first case and positive codimension in the second one.

%
%
%
%
%

Let us start by taking $p\in (A\cup R) \setminus A\cap R$. Since $p$ is a smooth point of $A\cup R$, $\deg (2p\cap (A\cup R))=2$.
Consider the exact sequence
\begin{equation}\label{eqqqqq}
0\to \Ii _{(A\cup R)\cup 2p}(1,2)\to \Ii _{A\cup R}(1,2)\to \Ii_{A\cup R}\otimes \Oo_{2p}(1,2)\to 0.
\end{equation}
Since $\deg (2p) =4$ and $A$ is smooth, we have $h^0(\Ii_{A\cup R}\otimes \Oo_{2p}(1,2))=2$ if $p\in A\cup R$ and $h^0(\Ii_{A}\otimes \Oo_{2p}(1,2))=4$ if $p\in R$.
Hence it is sufficient to prove that
$$h^1(\Ii _{(A\cup R)\cup 2p}(1,2)) =0.$$
 
First of all, assume that $p\in A\setminus R$.
Let $C$ be the connected component of $A$ containing $p$. Set the following notation $E:= A\setminus C$. 
Since $R$ is in the base locus of $\Ii_{A}(1,1)$ we have that $h^0(\Ii_{A}(1,1))=h^0(\Ii_{A\cup R}(1,1))$ (see~\cite[proof of Theorem 1.1]{abb}). Moreover, thanks to part \textit{(1)} and to \cite[Remark 4.3]{abb}, we have  $h^0(\Ii_{A\cup R}(1,1))=h^0(\Ii_{A}(1,1)) =h^0(\Ii_{E}(1,1)) -3$.
Thus $p$ is not in the base locus of $|\Ii_{E}(1,1)|$. 
Fix $M\in |\Ii_{E}(1,1)|$ such that $p\notin S$. 
Let $Y$ be the surface of $|\Oo_{\FF}(0,1)|$ containing $C$  and consider the residual exact sequence with respect to $Y$:
\begin{equation}\label{eqcc1}
0 \to \Ii_{E\cup p}(1,1) \to \Ii _{A\cup 2p}(1,2)\to \Ii_{{(E\cap Y)}\cup C\cup (2p\cap Y),Y}(1,2)\to 0.
\end{equation}

We now prove that 
\begin{equation}\label{1vanish}
h^1(\Ii _{E\cup p}(1,1)) =0.
\end{equation}

{Recall that $A=E\cup C$ and $p\in C$, hence we have the exact sequence
$$0 \to \Ii_{A}(1,1) \to \Ii _{E\cup p}(1,1)\to \Ii_{p,C}(2)\to 0.$$
Thanks to Theorem~\ref{c01} we have that  $h^1(\Ii _{A}(1,1)) =0$; on the other hand,  since $C$ is a smooth rational curve, we have
$h^1(\Ii_{p,C}(2))=h^1(\Oo_{C}(1))=0$, and this proves \eqref{1vanish}.
}

To conclude, it is now sufficient to prove that
\begin{equation}\label{2vanish}
h^{1}(\Ii_{(E\cap Y)\cup C\cup (2p\cap Y),Y}(1,2))=0.
\end{equation}

{
Note that 
$\Ii_{C,Y}(1,2)\cong \Oo_{F_1}(2f)\cong \Oo_Y(0,1)$,
hence, by~\cite[Remark 2.11]{abb} we know that $\Ii_{C,Y}(1,2)$ is very ample.
Therefore we get
$h^1(\Ii_{C\cup (2p\cap Y),Y}(1,2))=0$. Since $E\cap Y$ consists of one point, we conclude that \eqref{2vanish}
 holds.
}

Thus the exact sequence \eqref{eqcc1} gives $h^1(\Ii _{A\cup 2p,\FF}(1,2))=0$, completing the proof in the case $p\in A\setminus  R$.

Fix $p\in R\setminus (A\cap R)$ and ecall that we need to prove that $h^0(\Ii_{A\cup 2p}(1,2)) = 5$. Fix a general $Y'\in |\Ii_p(0,1)|$. Since $Y'$ is general, $R\nsubseteq Y'$ (and also $L\nsubseteq Y'$). Since $Y'$ is smooth,  $Y'\cap 2p = (2p,Y')$ is a scheme of degree $3$ and $\Res_{Y'}(2p) =\{p\}$.
As $p\in R$, we have that $h^0(\Ii_{A\cup \{p\}}(1,1)) =h^0(\Ii _A(1,1)) =2$. Thus, by the residual exact sequence of $Y'$ it is sufficient to prove that $$h^0(Y',\Ii_{(A\cap Y')\cup (2p,Y')}(1,2)) \le 3.$$
Since $\Oo_{Y'}(1,2)$ is very ample, we have $h^0(Y',\Ii_{(2p,Y')}(1,2)) =h^0(Y',\Oo_{Y'}(1,2)) -3 = 4$. So it is enough to prove that $A\cap Y'$ is not contained in the base locus, $\Bb'$, of $|\Oo_{(2p,Y')}(1,2)|$. 
In the identification between $Y'$ and $F_1$ we have $\Oo_{Y'}(1,2) \cong \Oo_{F_1}(h+3f)$. Let $N$ be the only element of $|f|$ that contains $p$. We have $N\cong \PP^1$ and $h^1(N,\Ii_{2p\cap N}(1,2)) =0$, but $N\subseteq \Bb'$.
Since $\Oo _{F_1}(h+2f)$ is very ample, $\Ii_p(h+2f)$ has only $p$ in its base locus. Thus $\Bb' =N$ and so, since $R\nsubseteq Y'$ (and also $L\nsubseteq Y'$), $\Bb'$ cannot contain both points of $A\cap Y'$.

The last case is $p\in A\cap R$. To prove our claim, i.e. that  $h^0(\Ii_{2p\cup A}(1,2)) < h^0(\Ii _A(1,2))$, it suffices to use \eqref{eqqqqq}.
Hence, a general $S$ is smooth.
\end{proof}

%
%
%

 The following result is analogous to Proposition~\ref{n3}, if we choose the
 conics to be twistor fibers.

\begin{proposition}\label{n2.1}
Take $A\in \Tt(3)$ such that $h^0(\Ii_A(1,1))=0$.
Then we have the following:
\begin{enumerate}
\item $h^1(\Ii _A(1,2))=0$ and hence $h^0(\Ii _A(1,2)) =3$;
\item there is an irreducible $S\in |\Ii_A(1,2)|$; 
\item the base locus of $|\Ii_A(1,2)|$ is contained {in} the union of $A$ and $3$ different curves of bidegree $(1,0)$;
\item for a sufficiently general $A$ (contained in a dense euclidean open subset of $\Tt(3)$), we can take a smooth $S\in |\Ii_A(1,2)|$.
\end{enumerate}
\end{proposition}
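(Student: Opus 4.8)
The plan is to run the four statements in order, first observing that the hypothesis $h^0(\Ii_A(1,1))=0$ forces $A\in\Tt^*(3)$. Indeed, by Lemma~\ref{u7.1} every $A\in\Cc^*(3)$ satisfies $h^0(\Ii_A(1,1))=0$, whereas by Remark~\ref{n2} an $A\in\Tt(3)\setminus\Tt^*(3)$ satisfies $h^0(\Ii_A(1,1))>0$; hence our $A$ lies in $\Tt^*(3)=\Tt(3)\cap\Cc^*(3)$. For \textit{(1)} I would then apply Theorem~\ref{c01} with $d=2$ (noting $\Tt^*(3)\subset\Cc^*(d+1)$) to get $h^1(\Ii_A(1,2))=0$, and read off $h^0(\Ii_A(1,2))=\chi(\Ii_A(1,2))=15-3\cdot 4=3$ from Formula~\eqref{p1}. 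Thus $|\Ii_A(1,2)|\cong\PP^2$.

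For \textit{(2)} I would classify the reducible members of this $\PP^2$. By Remark~\ref{m3.1} no irreducible surface of bidegree $(0,2)$ contains a conic, and by Remark~\ref{m3} a surface of bidegree $(1,0)$ or $(0,1)$ carries at most one conic; so a reducible $S\in|\Ii_A(1,2)|$ is either a ``triangle'' $X_i\cup Y_j\cup Y_k$ (finitely many) or of the form $Y_i\cup M$ with $M\in|\Ii_{B_i}(1,1)|$, where $B_i:=A\setminus C_i$ and $Y_i$ is the unique $(0,1)$-surface through $C_i$. Since $h^0(\Ii_{B_i}(1,1))=2$ by Corollary~\ref{cor1}, each such family is a line in $\PP^2$, so the reducible locus is three lines together with finitely many points and cannot cover $\PP^2$; the general $S$ is therefore irreducible.

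For \textit{(3)} write $L_{ij}$ and $R_{ij}$ for the curves of bidegree $(1,0)$, respectively $(0,1)$, joining $C_i$ and $C_j$ as in Remark~\ref{LR}. Each $L_{ij}$ meets $A$ in two points while $L_{ij}\cdot S=1$ for every $S\in|\Oo_\FF(1,2)|$, so by B\'ezout (Remark~\ref{m0000}) we get $L_{ij}\subseteq\Bb$. For the reverse inclusion I would take $p\in\Bb\setminus(A\cup L_{12}\cup L_{13}\cup L_{23})$ and seek a member missing $p$. By Proposition~\ref{ee1} the base locus of $|\Ii_{B_i}(1,1)|$ is $B_i\cup L_{jk}\cup R_{jk}$; moreover $Y_i\cap Y_j=L_{ij}$, so $p$ lies in at most one $Y_i$, and the three fibers $R_{jk}$ are pairwise disjoint, so $p$ lies in at most one of them. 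Since there are three indices and at most two obstructions, some index $i$ satisfies both $p\notin Y_i$ and $p\notin R_{jk}$; then $p$ avoids the base locus of $|\Ii_{B_i}(1,1)|$, and choosing $M$ there with $p\notin M$ yields $Y_i\cup M\in|\Ii_A(1,2)|$ missing $p$, contradicting $p\in\Bb$. Hence $\Bb\subseteq A\cup L_{12}\cup L_{13}\cup L_{23}$.

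For \textit{(4)} I would combine Bertini's theorem with a local analysis along $\Bb$. By \textit{(2)} the general $S$ is irreducible, and Bertini confines $\mathrm{Sing}(S)$ to the base curve $\Bb=A\cup L_{12}\cup L_{13}\cup L_{23}$. For $p\in\Bb$ the surfaces singular at $p$ are precisely $|\Ii_{2p\cup A}(1,2)|$, and the aim is to show, for $A$ in a dense euclidean open subset of $\Tt(3)$, that $h^0(\Ii_{2p\cup A}(1,2))\le h^0(\Ii_A(1,2))-2$ for the generic $p$ on each component of $\Bb$, so that the locus of surfaces acquiring a singularity along the one-dimensional $\Bb$ is a proper subvariety of $\PP^2$; the finitely many intersection points $A\cap(L_{12}\cup L_{13}\cup L_{23})$ would be handled separately. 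I would derive these estimates exactly as in Proposition~\ref{ee2}(3), residuating $2p\cup A$ along the $(0,1)$-surface through the relevant component and reducing to vanishing statements on $F_1$, with the genericity of $A$ supplying the required cohomological vanishings. Controlling singularities along the whole base curve with only a $\PP^2$ of surfaces at our disposal is the delicate step, and I expect this to be the main obstacle.
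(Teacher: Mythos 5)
Your proposal is correct, and its skeleton --- residual sequences along $(0,1)$-surfaces, classification of the reducible members of $|\Ii_A(1,2)|$, Bertini plus a pointwise singularity analysis --- is the same as the paper's, but two of your steps differ from the printed proof in ways worth recording. For \textit{(1)} you deduce $A\in\Tt^*(3)$ from the contrapositive of Remark~\ref{n2} and then simply quote the equivalence of Theorem~\ref{c01}; the paper instead reproves the vanishing by hand, reducing along the $(0,1)$-surface through a component to $h^1(F_1,\Ii_{B\cap Y}(2f))=0$ and deriving a contradiction with Remark~\ref{n2} if a fiber of the ruling contained both points of $B\cap Y$. The mathematical content is identical (Theorem~\ref{c01} is proved by exactly that reduction), but your packaging is shorter and makes the role of the hypothesis $h^0(\Ii_A(1,1))=0$ transparent. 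For \textit{(3)} your argument is in fact more complete than what the paper writes: the printed proof only checks the easy inclusion, namely that the curves $L_i$ (and $R_i$) obtained from the double points of $\pi_2(A)$ and $\pi_1(A)$ lie in the base locus $\Bb$ via B\'ezout/Remark~\ref{m0000}, whereas the statement asserts $\Bb\subseteq A\cup L_{12}\cup L_{13}\cup L_{23}$. Your covering argument --- $p$ lies in at most one $Y_i$ because $Y_i\cap Y_j=L_{ij}$, and in at most one $R_{jk}$ because these are distinct fibers of $\pi_1$ (distinctness again coming from $A\in\Tt^*(3)$ and $j$-invariance), so some index $i$ is unobstructed and Proposition~\ref{ee1} produces $Y_i\cup M$ avoiding $p$ --- is precisely the paper's proof of Proposition~\ref{n3}(4), transplanted to where the present statement needs it. Parts \textit{(2)} and \textit{(4)} coincide with the paper's treatment: in particular the paper's entire proof of \textit{(4)} is the sentence ``reason as in Proposition~\ref{ee2}(3)'', so the codimension-two estimate $h^0(\Ii_{2p\cup A}(1,2))\le h^0(\Ii_A(1,2))-2$ at a generic base point, which you flag as the main obstacle, is left at the same level of detail in both texts, and your dimension count explaining why that estimate (plus positive codimension at the finitely many special points) suffices is the correct way to conclude.
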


{ 
In the previous statement
we assume  
$A\in \Tt(3)$ such that $h^0(\Ii_A(1,1))=0$.
Note that, thanks to Lemma~\ref{primo-caso}, 
this implies that 
the conics in $A$ do not belong to any infinite family of twistor fibers contained in
a smooth $j$-invariant surface of bidegree $(1,1)$.}

\begin{proof}
We start with the proof of \textit{(1)}.
Fix a connected component $C$ of $A$ and call $D$ the only element of $|\Ii_C(0,1)|$. Set $B:= A\setminus C$. To get $h^1(\Ii _A(1,2))=0$, mimicking the proof of Proposition \ref{n3} it is sufficient to prove that $h^1(F_1,\Ii _{B\cap D}(2f)) =0$.
Suppose $h^1(F_1,\Ii _{B\cap D}(2f)) >0$, i.e. assume the existence of $T\in |\Oo_{F_1}(f)|$ containing the $2$ points $B\cap D$. Since $C\in |\Oo_{F_1}(h+f)|$, $C\cap T\ne \emptyset$. So $T$ hits every connected component of $A$.
Remark \ref{n2} gives $h^0(\Ii_A(1,1)) >0$, a contradiction. 

To prove \textit{(2)}, it is sufficient to show that the reducible cases
do not cover the whole $|\Ii_A(1,2)|$. In fact, reasoning as in the proof of Proposition~\ref{n3}, the only possible splittings are of the form $(1,0)+(0,1)+(0,1)$, which are in a finite number, or $(1,1)+(0,1)$, where the bidegree $(1,1)$ component 
contains $2$ connected components of $A$ and the remaining bidegree $(0,1)$ part is uniquely determined. Now, $h^0(\Ii _B(1,1))=2$, so, 
the set of all reducible elements of $|\Ii_{A}(1,2)|$ with an irreducible component of bidegree $(1,1)$ does not cover $|\Ii_A(1,2)|$.

We now prove \textit{(3)} and \textit{(4)}. Since $h^0(\Ii_A(1,1))=0$ and $A$ is $j$-invariant, neither $\pi _1(A)$ nor $\pi _2(A)$ has a triple point (both have $3$ double points).
Set $L_1\cup L_2\cup L_3:= \pi _2^{-1}(\mathrm{Sing}(\pi _2(A)))$ and $R_1\cup R_2\cup R_3:= \pi _1^{-1}(\mathrm{Sing}(\pi _1(A)))$. Since $\#(L_i\cap A)=\#(R_i\cap A)={2}$, $L_1\cup L_2\cup L_3$ are in the base locus of $|\Ii_A(1,2)|$ and each $L_i$ and each $R_i$ meets exactly $2$ connected components of $A$.

To prove the existence of a smooth element, it is sufficient to reason as in the proof of Proposition~\ref{ee2} case \textit{(3)}.
\end{proof}

{We are now ready to prove the first part of Theorem \ref{i2i1}.}

\begin{theorem}\label{terzo-I}
Fix $n\in \{0,1,2,3\}$. There is a smooth $S\in |\Oo_{\FF}(1,2)|$ which contains exactly $n$ twistor fibers. 
\end{theorem}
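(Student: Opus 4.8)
The plan is to handle the four values of $n$ one at a time, in each case producing a configuration $A\in\Tt(n)$ for which both of the following hold: (a) the general element of $|\Ii_A(1,2)|$ is smooth, and (b) the general element of $|\Ii_A(1,2)|$ contains no twistor fiber other than those of $A$. Property (b) will always be supplied by Theorem~\ref{u5}, which applies since $0\le n\le 3=d+1$. The smooth members of $|\Ii_A(1,2)|$ form a Zariski open subset, which is nonempty (hence dense) once (a) is established, while the members with no extra twistor fiber form a dense subset by (b); since a dense set meets every nonempty open set, a general $S$ is simultaneously smooth and contains exactly the $n$ prescribed fibers. Thus the whole problem reduces to establishing (a) for a good choice of $A$ in each case.

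For $n=3$ I would take a general $A\in\Tt^*(3)$. As $\Tt^*(3)\subset\Cc^*(3)$, Lemma~\ref{u7.1} gives $h^0(\Ii_A(1,1))=0$, so the hypotheses of Proposition~\ref{n2.1} are met, and part~(4) of that proposition yields a smooth $S\in|\Ii_A(1,2)|$ for $A$ ranging in a dense subset of $\Tt(3)$; this is exactly (a). For $n=2$ I would invoke Remark~\ref{esistono} to choose $A\in\Tt(2)$ consisting of the two (disjoint) twistor fibers of a smooth surface $M\in|\Oo_{\FF}(1,1)|$; then $A\in\Cc(2)$ lies on a smooth $(1,1)$-surface, so Proposition~\ref{ee2}(3) gives that the general $S\in|\Ii_A(1,2)|$ is smooth, which is (a).

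For $n=1$ I would fix a single twistor fiber $C$, so that Corollary~\ref{cor1} gives $h^1(\Ii_C(1,2))=0$ and $h^0(\Ii_C(1,2))=11$. A short B\'ezout count, using $\Oo_{\FF}(1,2)\cdot L=1$ for a bidegree $(1,0)$ curve $L$ and $\Oo_{\FF}(1,2)\cdot R=2$ for a bidegree $(0,1)$ curve $R$ (cf.\ Remark~\ref{m0000}), shows that a single smooth conic meets any such $L$ or $R$ in at most one point; hence no bidegree $(1,0)$ or $(0,1)$ curve is forced into the base locus of $|\Ii_C(1,2)|$, which is therefore $C$ itself. Bertini then gives smoothness of the general member away from $C$, and smoothness along $C$ follows from the infinitesimal argument of Proposition~\ref{ee2}(3): for every $p\in C$ one checks $h^0(\Ii_{2p\cup C}(1,2))=h^0(\Ii_C(1,2))-2$ (the scheme $2p$ has length $4$ and meets $C$ in length $2$), so the members singular at some point of the one-dimensional curve $C$ form a locus of positive codimension. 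This gives (a). Finally, for $n=0$ the general $S\in|\Oo_{\FF}(1,2)|$ is smooth by Bertini, since $\Oo_{\FF}(1,2)$ is very ample; and a dimension count of the type used in Theorem~\ref{u5} shows that the members containing some twistor fiber have real codimension at least $4$: each $C\in\Tt$ imposes complex codimension $15-11=4$, i.e.\ real codimension $8$, while $\Tt$ has real dimension $4$.

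The main obstacle is the smoothness-along-the-base-locus step for $n=1$: one must verify that imposing a singular point drops $h^0$ by exactly $2$ uniformly for all $p\in C$, which I would do via the residual exact sequence on the bidegree $(0,1)$-surface through $C$, exactly as in Proposition~\ref{ee2}(3), and then confirm that the resulting smooth general member can be taken inside the dense locus where Theorem~\ref{u5} forbids additional twistor fibers.
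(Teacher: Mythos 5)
Your proposal is correct and follows essentially the same route as the paper: the same case-by-case structure, with Proposition~\ref{ee2} (via Remark~\ref{esistono}) for $n=2$, Proposition~\ref{n2.1} for $n=3$, Bertini plus the infinitesimal argument of Proposition~\ref{ee2}(3) along the base locus $C$ for $n=1$, and the codimension count (complex codimension $4$ versus the real $4$-dimensional family $\Tt(1)$) to exclude extra twistor fibers. The only difference is cosmetic: you package the ``no other twistor fibers'' step uniformly through Theorem~\ref{u5} and an explicit dense-meets-open argument, where the paper runs the same dimension count inline for $n=0,1$ and leaves it implicit for $n=2,3$; note also that your deduction ``no $(1,0)$ or $(0,1)$ curve lies in the base locus, hence the base locus is $C$'' is stated more strongly than you justify, though the conclusion is true (and asserted without proof in the paper as well, e.g.\ by intersecting the reducible members $X\cup Y_1\cup Y_2$ and $Y\cup M$ of $|\Ii_C(1,2)|$).
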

\begin{proof}
A general $S\in |\Oo_{\FF}(1,2)|$ contains only finitely many smooth conics. Since the set of all twistor fibers has real codimension $4$ in the space of all smooth conics, a general $S\in |\Oo_{\FF}(1,2)|$ contains no twistor fiber.

Now we prove the case $n=1$. Fix a twistor fiber $C$ and take a general $S\in |\Ii_C(1,2)|$. Assume that $S$ contains another twistor fiber, $E$. We have $h^1(\Ii_C(1,2)) =h^1(\Ii _{C\cup E}(1,2)) =0$ (Theorem \ref{c01} and Remark \ref{remarkCHI}). Thus $|\Ii_{C\cup E}(1,2)|$ is a $4$-codimensional complex projective subspace of $|\Ii_C(1,2)|$ (this is explained by the equality $h^{0}(\Ii_{C	\cup E}(1,2))=h^{0}(\Ii_{C}(1,2))-4$ contained in~\cite[proof of Theorem 1.1]{abb}). However $\Tt(1)$ is a real $4$-dimensional space. So a general $S\in |\Ii_C(1,2)|$ does not contain any other twistor fiber. 

{Note that $C$ is the base locus of $|\Ii_C(1,2)|$. By Bertini's theorem a general $S\in |\Ii_C(1,2)|$ is smooth outside $C$. Fix $p\in C$ and let $2p$ be the closed subscheme of $\FF$ with $(\Ii_p)^2$ as its ideal sheaf. Recall that $2p\subset S$ if and only if $p\in \mathrm{Sing}(S)$. Since $\dim C=1$, to get that $S$ is smooth, it is sufficient to prove that $h^0(\Ii_{2p\cup C}(1,2)) \le h^0(\Ii_C(1,2)) -2=9$. This follows from the proof of Proposition~\ref{ee2}} case \textit{(3)}.

The case $n=2$ is true by Proposition~\ref{ee2} with $\Tt(2)$ instead of $\Cc(2)$.

The case $n=3$ is true by Proposition~\ref{n2.1}.
\end{proof}

In the remainder of this section, we will construct a smooth $(1,2)$-surface 
containing $4$ twistor fibers. {The following lemma, in the case $d=2$, says 
 that
if an irreducible  $(1,2)$-surface contains $4$ disjoint smooth conics, then these conics are not general, because three of them must be collinear}.


\begin{lemma}\label{n5}
Let $d\ge 2$ and $A\in\Cc(d+2)$. If there is an irreducible $S\in |\Oo_{\FF}(1,d)|$, then
$A\notin\Cc^{*}(d+2)$.
\end{lemma}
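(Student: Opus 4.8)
The plan is to argue by contradiction and to reduce the whole statement to the cohomological bound already contained in Lemma~\ref{ii1}. So suppose that $A\in\Cc(d+2)$ is contained in an irreducible surface $S\in|\Oo_{\FF}(1,d)|$, and assume, towards a contradiction, that $A\in\Cc^{*}(d+2)$.

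First I would record the Euler characteristic. Since $A\in\Cc(d+2)$, Formula~\eqref{p1} with $n=d+2$ gives
\[
\chi(\Ii_A(1,d))=(d+1)(d+3)-(d+2)^2=-1,
\]
so that $h^0(\Ii_A(1,d))=h^1(\Ii_A(1,d))-1$. Because we are assuming $A\in\Cc^{*}(d+2)$, Lemma~\ref{ii1} applies and yields $h^1(\Ii_A(1,d))\le 1$. Combined with the previous identity this forces $h^0(\Ii_A(1,d))\le 0$, hence $h^0(\Ii_A(1,d))=0$. In other words the linear system $|\Ii_A(1,d)|$ is empty, so no surface of bidegree $(1,d)$ whatsoever (irreducible or not) can contain $A$. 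This contradicts the existence of the irreducible surface $S\supseteq A$, which would provide a nonzero section of $\Ii_A(1,d)$. Therefore $A\notin\Cc^{*}(d+2)$, as claimed.

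In this route there is essentially no obstacle left: all the geometric content has been absorbed into Lemma~\ref{ii1}, and the only points requiring care are the sign of $\chi(\Ii_A(1,d))$ for $n=d+2$ and the direction of the inequality $h^1\le 1$. If instead one wanted a self-contained proof not appealing to Lemma~\ref{ii1}, I would mirror the proof of Theorem~\ref{u6} almost verbatim: peel off a connected component $C$, pass to the residual sequence~\eqref{equ1} along the unique $Y\in|\Ii_C(0,1)|$, use that $B\cap Y$ consists of $d+1$ points lying on distinct fibers of the ruling $Y\cong F_1$ (so that $h^1(F_1,\Ii_{B\cap Y}(df))=0$), and then invoke Lemma~\ref{u7.1} in place of Lemma~\ref{primo-caso} to exclude a bidegree $(1,1)$-surface through three conics of $A$. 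The harder and more substantive of the two arguments is this inductive one, but since Lemma~\ref{ii1} is already available the first route is both shorter and cleaner.
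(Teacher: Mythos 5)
Your proof is correct, and it takes a genuinely different --- and considerably shorter --- route than the paper's. The paper proves Lemma~\ref{n5} by running a fresh induction on $d$: in the base case $d=2$ it peels off a component $C$, uses Lemma~\ref{u7.1} to get $h^0(\Ii_B(1,1))=0$ for $B=A\setminus C\in\Cc^*(3)$, and then converts $h^0(Y,\Ii_{A\cap Y,Y}(1,2))>0$ into a fiber of the ruling of $Y\cong F_1$ passing through two points of $B\cap Y$, hence into a bidegree $(1,0)$ curve meeting three components of $A$; the inductive step repeats the same residual-sequence mechanism, quoting as a ``by-product'' that $B\in\Cc^*(d+1)$ forces $h^0(\Ii_B(1,d-1))=0$. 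You instead observe that for $n=d+2$ Formula~\eqref{p1} gives $\chi(\Ii_A(1,d))=-1$, so $h^0(\Ii_A(1,d))=h^1(\Ii_A(1,d))-1$, and Lemma~\ref{ii1} caps $h^1$ at $1$, forcing $h^0(\Ii_A(1,d))=0$; since Lemma~\ref{ii1} is proved in Section~\ref{prelimi} by its own induction, with no appeal to Lemma~\ref{n5}, there is no circularity. Your route buys two things: it is essentially a two-line deduction, and it yields the stronger conclusion that \emph{no} surface of bidegree $(1,d)$, reducible or not, contains an element of $\Cc^*(d+2)$ --- precisely the strengthening the paper's own induction needs and re-derives internally. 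In effect you have noticed that, modulo the identity $h^0-h^1=-1$, Lemma~\ref{ii1} and (the strengthened) Lemma~\ref{n5} are equivalent statements ($h^1\le 1$ is the same as $h^0=0$ here), so the paper's separate induction essentially duplicates the work already done in proving Lemma~\ref{ii1}. What the paper's longer argument offers in exchange is only self-containedness at that point of the text and an explicit construction of the collinearity line $\hat L$; the later results (Lemmas~\ref{bo1}, \ref{bo5} and Theorems~\ref{n6}, \ref{n7}) use only the statement of Lemma~\ref{n5}, not those intermediate objects, so nothing downstream would break if your proof were substituted.
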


\begin{proof}
We prove the lemma by induction on $d$.
We start with the case $d=2$.
Suppose that $A\in \Cc^*(4)$, i.e. there is no union $B$ of $3$ of the connected components of $A$ such that
$\#(L\cap B)=3$ for some curve $L$ of bidegree $(1,0)$. Fix a connected component $C$ of $A$ and set $B:= A\setminus C$. Call $Y$ the only element of $|\Ii_C(0,1)|$. Remark \ref{m3} gives $\Res_Y(A) =B$. By assumption and Lemma~\ref{u7.1}, $h^0(\Ii_{B}(1,1)) =0$.
Since $h^0(\Ii_A(1,2))\ne 0$, the residual exact sequence 
$$
0\to\Ii_{B}(1,1)\to\Ii_{A}(1,2)\to\Ii_{A\cap Y, Y}(1,2)\to0\,,
$$
gives $h^0(Y,\Ii _{A\cap Y,Y}(1,2))>0$ (otherwise $|\Ii_{A}(1,2)|=\emptyset$). The scheme $A\cap Y$ is the union of $C$ and the $3$ points $B\cap Y$. In the identification of $Y$ with $F_1$ the line bundle $\Oo_Y(1,2)$
goes to the line bundle $\Oo_{F_1}(h+3f)$ and $C$ goes to an element of $|h+f|$. Thus $h^0(F_1,\Ii_{B\cap Y,F_1}(2f)) >0$. Thus at least $2$ of the $3$ points $B\cap Y$ are in the same fiber $\hat L$ of the ruling $|f|$ of $F_1$. Since $\hat L\cap C\ne \emptyset$, $\hat L$ is a curve of bidegree $(1,0)$ that meets at least $3$ connected components of $A$. Let $B'$ be the union of $3$ components of $A$ that intersect $\hat L$. The curves $B'$ and $\hat L$ give a contradiction.

Now assume that the result is true for $d+1$. Note that as a by-product of the previous part, if $B\in\Cc^{*}(d+1)$, then $h^0(\Ii_B(1,d-1)) =0$.

Assume $A\in \Cc^*(d+2)$ and that there is an irreducible $S\in |\Oo_{\FF}(1,d)|$. 
Fix a connected component $C$ of $A$ and set $B:= A\setminus C$. Take a surface $Y$ of bidegree $(0,1)$ containing
$C$. By means of the sequence in Formula~\eqref{successione}, we either have $h^0(\Ii_B(1,d-1)) >0$ or $h^0(Y,\Ii _{A\cap Y,Y}(1,d)) >0$. Since $A\in \Cc^*(d+2)$, $B\in \Cc^*(d+1)$ and hence, thanks to the inductive assumption, we have $h^0(\Ii_B(1,d-1)) =0$. The scheme $A\cap  Y$ is the union of $C$ and the scheme $B\cap Y$ with $A\cap B\cap Y=\emptyset$.
Up to the identification of $Y$ and $F_1$ we have $\Oo_Y(1,d)(-C) \cong \Oo_{F_1}(df)$. Since $Y$ has bidegree $(0,1)$, every connected component of $B$ is either contained in $Y$ or it intersects transversely $Y$ at a unique point. By Remark \ref{m3}, the set $B\cap Y$ is formed by $d+1$ points. Thus $\Ii_{A\cap Y,Y}(1,d) \cong \Ii_{B\cap Y}(df)$. We saw that $h^0(Y,\Ii _{A\cap Y,Y}(1,d)) >0$, and this is true if and only if there are $u_{1},\dots u_{d+1}\in B\cap Y$ and $F\in |f|$ such that that $u_{i}\ne u_{j}$, for $i\neq j$ and $\{u_{1},\dots,u_{d+1}\}\subset F$. The set $F\cap C$ is a unique point, $o$, and $o\notin \{u_{1},\dots,u_{d+1}\}$, because $B\cap C=\emptyset$. The curve $F$ has bidegree $(0,1)$ and therefore $A\notin \Cc^*(d+2)$, a contradiction.
\end{proof}

%
%
%

Thanks to the previous result, if an irreducible $(1,2)$-surface contains $4$ disjoint smooth conics, then
they are in special position. We now show that if these $4$ conics are twistor fibers, then
their position is very special. 
We start by introducing the following notation.
For $n\ge 4$, we denote by $\Cc(n)^-$  the set of the elements $A\in\Cc(n)$ for which there exists a bidegree $(1,0)$ curve $L$ such that $A\in\Cc(n,L)$. {The set $\Cc(n)^-$ parametrizes the families of $n$ collinear disjoint smooth conics.} 
For $n\ge 4$ we also write
$\Tt(n)^-:= \Tt(n)\cap \Cc(n)^-$. The families $\Cc(n)^{-}$ and $\Tt(n)^{-}$ are Zariski closed in $\Cc(n)$ and $\Tt(n)$, respectively. 

{The following lemma shows that if an irreducible $(1,2)$-surface contains $4$ twistor fibers, then they are all collinear.} 
\begin{lemma}\label{bo1}
Take an irreducible $S\in |\Oo_{\FF}(1,2)|$ containing $A\in \Tt(4)$. Then $A\in \Tt(4)^-$.
\end{lemma}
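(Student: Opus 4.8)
The plan is to first reduce, via Lemma~\ref{n5}, to the case in which three of the four fibers are already collinear, and then to peel off the fourth one with a residual sequence, using the irreducibility of $S$ to force that fourth fiber onto the same bidegree $(1,0)$ curve.

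First I would invoke Lemma~\ref{n5} with $d=2$: since $S$ is an irreducible element of $|\Oo_{\FF}(1,2)|$ containing $A\in\Tt(4)\subseteq\Cc(4)$, we get $A\notin\Cc^{*}(4)$, hence $A\notin\Tt^{*}(4)$. Thus three of the four components are collinear, so after relabelling $A=C_1\cup C_2\cup C_3\cup C_4$ there is a bidegree $(1,0)$ curve $L$ meeting each of $C_1,C_2,C_3$. Writing $B:=C_1\cup C_2\cup C_3$, it then suffices to prove $L\cap C_4\neq\emptyset$, since this yields $A\in\Tt(4,L)\subseteq\Tt(4)^{-}$.

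Next I would let $Y$ be the unique element of $|\Ii_{C_4}(0,1)|$ (Remark~\ref{postc02}) and use the residual exact sequence \eqref{successione} with $a=1$, $b=2$ and $C=C_4$:
$$0\to\Ii_B(1,1)\to\Ii_A(1,2)\to\Ii_{(B\cap Y)\cup C_4,Y}(1,2)\to 0.$$
Under the identification $Y\cong F_1$ of Formula~\eqref{coeff} one has $\Oo_Y(1,2)\cong\Oo_{F_1}(h+3f)$, while $C_4$ corresponds to an element of $|h+f|$, so the quotient term is $\Ii_{(B\cap Y)\cup C_4,Y}(1,2)\cong\Ii_{B\cap Y,F_1}(2f)$, where $B\cap Y$ consists of the three distinct points $C_i\cap Y$, $i=1,2,3$. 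Since $S$ is irreducible and $Y$ is an irreducible bidegree $(0,1)$ surface of different bidegree, $Y\not\subset S$; hence the section defining $S$ restricts to a nonzero section on $Y$, so its image in the quotient is nonzero and $h^{0}(\Ii_{B\cap Y,F_1}(2f))\geq 1$.

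Finally I would translate this vanishing geometrically. Writing $\rho:F_1\to\PP^1$ for the ruling, $\Oo_{F_1}(2f)=\rho^{*}\Oo_{\PP^1}(2)$, so a nonzero section vanishing at the three points of $B\cap Y$ forces $\rho$ to identify at least two of them; that is, two of the points $C_i\cap Y$, $C_j\cap Y$ lie on a common fibre $F\in|f|$, a bidegree $(1,0)$ curve contained in $Y$. This fibre $F$ meets both $C_i$ and $C_j$; but by Remark~\ref{LR} the bidegree $(1,0)$ curve joining two disjoint smooth conics is unique, and $L$ already joins $C_i$ and $C_j$, so $F=L$ and therefore $L\subset Y$. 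Inside $Y\cong F_1$ we have $L\in|f|$ and $C_4\in|h+f|$, and since $(h+f)\cdot f=1$ we conclude $L\cap C_4\neq\emptyset$, as desired.

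The only genuinely delicate step I foresee is the passage from $h^{0}(\Ii_{B\cap Y,F_1}(2f))\geq 1$ to the assertion that two of the three residual points share a fibre, together with the identification of that fibre with $L$ through the uniqueness in Remark~\ref{LR}; everything else is the standard residual-sequence and Hirzebruch-surface bookkeeping already used in the preceding lemmas.
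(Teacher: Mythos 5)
Your proof is correct. It shares the paper's skeleton -- Lemma~\ref{n5} to get three collinear fibers, the residual exact sequence of the $(0,1)$-surface $Y$ through the fourth fiber, the identification of the quotient with $\Ii_{B\cap Y,F_1}(2f)$, and the uniqueness in Remark~\ref{LR} to pin down the connecting line -- but your execution of the two pivotal steps is genuinely leaner. To produce a nonzero section of $\Ii_{B\cap Y,F_1}(2f)$, the paper first builds the unique irreducible $M\in|\Ii_B(1,1)|$ (Remark~\ref{remmmm}), checks that the fourth fiber is not contained in $M$ by a bidegree count, exhibits the reducible element $M\cup Y\in|\Ii_A(1,2)|$ distinct from $S$ to get $h^0(\Ii_A(1,2))\ge 2$, hence $h^1(\Ii_A(1,2))\ge 3$, and plays this against $h^1(\Ii_B(1,1))=2$ to force $h^1(Y,\Ii_{A\cap Y,Y}(1,2))>0$ -- which, since $\chi(\Ii_{B\cap Y,F_1}(2f))=0$, is equivalent to your statement $h^0(\Ii_{B\cap Y,F_1}(2f))\ge 1$; you obtain the same section in one line from the fact that the equation of $S$ is not divisible by that of $Y$, i.e.\ from irreducibility of $S$ alone. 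Similarly, at the end the paper identifies the fibre through two of the residual points with $L$ via $j$-invariance and an auxiliary irreducible $(1,1)$-surface $M'$ containing $B'\cup C_4\cup\hat L\cup j(\hat L)$, whereas you invoke the uniqueness of Remark~\ref{LR} directly and then read off $L\cap C_4\ne\emptyset$ from $(h+f)\cdot f=1$ inside $Y$. The trade-off: your route is shorter and purely $h^0$-theoretic, with no Euler-characteristic bookkeeping and no auxiliary surfaces; the paper's longer detour, however, yields byproducts -- the bound $h^0(\Ii_A(1,2))\ge 2$ and the explicit description of the reducible elements of $|\Ii_A(1,2)|$ -- that are quoted and reused later in the proofs of Lemma~\ref{bo2} and Theorem~\ref{aaa1}, so the extra work there is not wasted in the paper's overall architecture.
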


\begin{proof}
Suppose there exists an irreducible $S\in |\Oo_{\FF}(1,2)|$ containing $A\in \Tt(4)$. By Lemma \ref{n5} there exists a union $B$ of $3$ of the connected components of $A$ such that
$B\in\Tt(3)\setminus\Tt^{*}(3)$, i.e., there exists a bidegree $(1,0)$ curve $L$,
such that $B\in\Tt(3,L)$ and thus, thanks to Remark~\ref{remmmm} $h^0(\Ii_B(1,1)) >0$. 
However, the same remark tells us that $h^0(\Ii _B(1,1))=1$, $h^1(\Ii _B(1,1))=2$, and that the only element $M$ of $|\Ii_B(1,1)|$ is irreducible.

As usual, set $C:= A\setminus B$.  As in Remark~\ref{n2} since $L$ of bidegree $(1,0)$
meets every connected component of $B$, then $R:= j(L)$, of bidegree $(0,1)$, do the same.

Thanks to Remark~\ref{m0000} we get $B\cup L\cup R\subset M$.
Since $S$ and $M$ are irreducible, thanks to Lemma~\ref{uc1}, the one-dimensional scheme $S\cap M$ has bidegree $(5,4)$. Since $B\cup L\cup R$ has bidegree $(4,4)$, then $C\nsubseteq M$. Let $Y$ be only element
of $|\Ii_C(0,1)|$. Since $B\subset M\cup Y$, then $M\cup Y\in |\Ii_{A}(1,2)|$. Moreover, since $S$ is irreducible, then $S\neq M\cup Y$, and hence $h^0(\Ii_A(1,2))\ge 2$, i.e. $h^1(\Ii _A(1,2)) \ge 3$. Since $h^1(\Ii_B(1,1))=2$, the residual exact sequence 
$$
0\to\Ii_{B}(1,1)\to\Ii_{A}(1,2)\to\Ii_{A\cap Y, Y}(1,2)\to 0,
$$
 gives $h^1(Y,\Ii _{A\cap Y,Y}(1,2)) >0$.
As in the proof of Lemma \ref{n5} we obtain the following inequality $h^1(F_1,\Ii _{B\cap Y}(2f)) >0$, i.e. there exists a curve $\hat L\in |f|$ of bidegree $(1,0)$ which intersects at least $2$ of the connected components of $B$. 

Call $B'$ the union of $2$ of the connected components of $B$
intersecting $\hat L$. Since $\hat L\cap C\ne \emptyset$ and $B'\cup C$ is $j$-invariant, every connected component of $B'\cup C$ meets $j(\hat L)$. Remark \ref{n2}, Proposition \ref{n3} and B\'ezout imply the existence of an irreducible surface $M'$ of bidegree $(1,1)$
containing $B'\cup C\cup \hat L\cup j(\hat L)$. Since $B'\subset M'$, $\hat L$ and $j(\hat L)$ contain at least $2$ points of $M'$, then $B'\cup \hat L\cup j(\hat L)\subset M$. 
But by Remark~\ref{LR} there is a unique curve of bidegree $(1,0)$ intersecting two different smooth conics,
so $\hat L=L$ and both $L$ and $j(L)$ intersect every connected component of $B$. Thus $L$ intersects each connected component of $A$, i.e. $A\in \Cc(4)^{-}$.
\end{proof}

As a byproduct of the proof of the previous result, we get the following lemma,
which  says that there are {infinitely many} irreducible $(1,2)$-surfaces containing
$4$ {collinear} twistor fibers.

\begin{lemma}\label{bo2}
Take $A\in \Tt(4)^-$ and assume that $A$ is not contained in a surface of bidegree $(1,1)$. Then $\dim |\Ii _A(1,2)| =1$ and  $|\Ii _A(1,2)|$ contains exactly $4$ reducible elements of $|\Oo_{\FF}(1,2)|$.
\end{lemma}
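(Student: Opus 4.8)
The plan is to establish the two claims in turn: that $h^0(\Ii_A(1,2))=2$ (equivalently $\dim|\Ii_A(1,2)|=1$), and then that the reducible members are exactly four. Write $A=C_1\cup\cdots\cup C_4$ and let $L$ be a bidegree $(1,0)$ curve with $A\in\Tt(4,L)$, so that $R:=j(L)$ meets every $C_i$ as well (Remark~\ref{LjR}). The lower bound is immediate: by Lemma~\ref{no2} with $n=4$, $d=2$ we get $h^1(\Ii_A(1,2))\ge 3$, and since $\chi(\Ii_A(1,2))=-1$ by Formula~\eqref{p1}, this forces $h^0(\Ii_A(1,2))\ge 2$. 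The whole difficulty is the reverse inequality $h^0(\Ii_A(1,2))\le 2$.

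For the upper bound I would residuate along a del Pezzo surface. Set $B:=A\setminus C_1\in\Tt(3,L)$ and let $M$ be the unique member of $|\Ii_B(1,1)|$, which is irreducible by Remark~\ref{remmmm}. Since $B\in\Tt(3)\setminus\Tt^*(3)$, $M$ is $j$-invariant, hence \emph{smooth} by the discussion following Lemma~\ref{primo-caso}; thus $M$ is a del Pezzo surface of degree $6$. Because $A$ lies on no $(1,1)$-surface we have $C_1\not\subset M$, so $\Res_M(A)=C_1$ and the residual sequence of $M$ reads
\[
0\to \Ii_{C_1}(0,1)\to \Ii_A(1,2)\to \Ii_{A\cap M,M}(1,2)\to 0.
\]
As $h^0(\Ii_{C_1}(0,1))=1$ (Remark~\ref{postc02}) and $B\subseteq A\cap M$, this reduces the claim to the single inequality $h^0(M,\Ii_{B,M}(1,2))\le 1$.

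Proving $h^0(M,\Ii_{B,M}(1,2))=1$ is the main obstacle, and I would carry it out in $\mathrm{Pic}(M)=\ZZ H\oplus\ZZ E_1\oplus\ZZ E_2\oplus\ZZ E_3$. Identifying $M=\mathrm{Bl}_3\PP^2$ so that $\Oo_\FF(0,1)|_M=H$ and $\Oo_\FF(1,0)|_M=2H-E_1-E_2-E_3$ (consistent with the intersection numbers read off from~\eqref{tablechow}, and with $\Oo_\FF(1,1)|_M=-K_M$), one gets $\Oo_\FF(1,2)|_M=4H-E_1-E_2-E_3$, while the three bidegree $(1,0)$ curves of $M$ are $E_1,E_2,E_3$. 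The conics $C_2,C_3,C_4$ are pairwise disjoint, so they all lie in one ruling class: the only smooth-conic classes are $H-E_1,H-E_2,H-E_3$, and conics in distinct classes meet; since $L$ is a $(1,0)$ curve meeting each $C_j$ we may take $L=E_1$ and $[C_j]=H-E_1$, so $[B]=3H-3E_1$. Then $\Ii_{B,M}(1,2)=\Oo_M(H+2E_1-E_2-E_3)$, and since $(H+2E_1-E_2-E_3)\cdot E_1=-2$ the curve $E_1$ splits off with multiplicity $2$, leaving $h^0=h^0(\Oo_M(H-E_2-E_3))=1$. Combined with the previous paragraph this gives $h^0(\Ii_A(1,2))\le 2$, hence $\dim|\Ii_A(1,2)|=1$.

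For the count of reducible members, I would classify the bidegree splittings of a reducible $S\in|\Oo_\FF(1,2)|$ containing $A$: exactly one component has bidegree $(1,k)$, and necessarily $k\le 1$. If $k=0$, then the $(1,0)$ part together with the residual bidegree $(0,2)$ part carries at most three of the four conics (Remarks~\ref{m3} and~\ref{m3.1}), so such an $S$ cannot contain $A$. Hence $S=N\cup Y$ with $N$ of bidegree $(1,1)$ and $Y$ of bidegree $(0,1)$; since $Y$ contains at most one conic and $A$ lies on no $(1,1)$-surface, $N$ contains exactly three of the $C_i$, say $N\supseteq B_i:=A\setminus C_i$. Then $N$ is forced to be the unique $M_i\in|\Ii_{B_i}(1,1)|$ and $Y$ the unique $Y_i\in|\Ii_{C_i}(0,1)|$, so $S=M_i\cup Y_i$. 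Conversely each $M_i\cup Y_i$ contains $A$, and the four surfaces are distinct because the $Y_i$ are pairwise distinct (a $(0,1)$-surface carries a single conic, Remark~\ref{m3}). Therefore $|\Ii_A(1,2)|$ has exactly four reducible members.
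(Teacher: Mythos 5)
Your proof is correct, and while its skeleton coincides with the paper's at the decisive point, the way you execute each step is genuinely different. Both you and the paper residuate along the unique $(1,1)$-surface $M$ through three of the four fibers, use $h^0(\Ii_{C_1}(0,1))=1$ (Remark~\ref{postc02}), and thereby reduce everything to showing that the relevant linear system on $M$ is zero-dimensional. But where the paper argues on $M$ by bidegree bookkeeping --- every member of the image system has bidegree $(5,4)$, contains the fixed part $B\cup L\cup j(L)$ of bidegree $(4,4)$, and the residual $(1,0)$ curve can only be one of the three exceptional curves, forcing the system to be a single divisor --- you compute outright in $\mathrm{Pic}(M)$: the disjointness of $C_2,C_3,C_4$ pins their class to a single ruling class $H-E_1$, so $\Ii_{B,M}(1,2)=\Oo_M(H+2E_1-E_2-E_3)$, the $(-1)$-curve $E_1$ splits off twice, and $h^0(\Oo_M(H-E_2-E_3))=1$. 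This lattice computation is more transparent and also yields the slightly stronger bound $h^0(M,\Ii_{B,M}(1,2))\le 1$ rather than only a bound on the image $\Vv$. Your other two steps likewise improve on the paper's citations: the lower bound via Lemma~\ref{no2} and $\chi(\Ii_A(1,2))=-1$ (Formula~\eqref{p1}) avoids invoking the proof of Lemma~\ref{bo1}, which formally presupposes an irreducible surface containing $A$ --- an assumption absent from the statement being proved --- and your count of reducible members is a clean, self-contained classification of the splittings $(1,0)+(0,2)$, $(1,0)+(0,1)+(0,1)$ and $(1,1)+(0,1)$ using Remarks~\ref{m3} and~\ref{m3.1}, where the paper again defers to ``the lines of'' the proof of Lemma~\ref{bo1}. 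The one point you lean on without full detail is the smoothness of $M$ (needed for the del Pezzo/Picard structure), which you correctly source to $j$-invariance plus irreducibility via the discussion following Lemma~\ref{primo-caso}; the paper relies on the same fact, equally implicitly.
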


\begin{proof}
Let $L$ be the curve of bidegree $(1,0)$ that intersects every connected component of $A$. Since every connected component of $A$ is $j$-invariant, $j(L)$ intersects every connected component of $A$. In the proof of Lemma~\ref{bo1} we showed that $h^0(\Ii _A(1,2))\ge 2$. 
From the lines of that proof, it can be deduced that
only $4$ elements in $|\Ii_A(1,2)|$ are reducible and they are all obtained by fixing a connected component $C$ of $A$ and taking the union of the unique surface $M_C$ of bidegree $(1,1)$ containing $A\setminus C$ and the unique surface $Y_C$ of bidegree $(0,1)$ containing $C$. To complete the proof, it is sufficient to show that $h^0(\Ii _A(1,2))\le 2$. Take a connected component $C$ of $A$ and consider the residual exact sequence
\begin{equation}\label{eq=a1}
0 \to \Ii _C(0,1)\to \Ii_A(1,2)\to \Ii_{M_C\cap A,M_C}(1,2)\to 0.
\end{equation}
We have $h^0(\Ii_C(0,1))=1$, because the intersection of $2$ different elements of $|\Oo_Y(0,1)|$ is a curve of bidegree $(1,0)$. Thus by \eqref{eq=a1} to conclude the proof it is sufficient to prove that the image $\Vv$ of $H^0(\Ii _A(1,2))$ in $H^0(M_C,\Ii_{M_C\cap A,M_C}(1,2))$ has dimension at most $1$. Bez\'out gives that $A\cup L\cup j(L)$ is contained in the base locus of $|\Ii_{B,M_C}(1,2)|$. Every {$D\in |\Vv|$} has bidegree $(5,4)$ as a curve of $\FF$ 
and thus a general $D\in |\Vv|$ is the union (counting multiplicities as divisors of the smooth surface $M_C$) of $A\cup L\cup j(L)$ and a curve $E$ of bidegree $(1,0))$ as a curve of $\FF$.
Recall that $M_C$ is the blow up of $\PP^{2}$ at $3$ non collinear points and that these $3$ exceptional divisors are the only curves of $M_C$ with bidegree $(1,0)$. Since $M_C$ has only finitely many curves of bidegree $(1,0)$, $D$ is the same for all non-zero elements of $\Vv$ and hence $\dim \Vv =1$.
\end{proof}

{The following result completes the proof of Theorem \ref{i2i1}.}
\begin{theorem}\label{aaa1}
There exist irreducible $S\in |\Oo_{\FF}(1,2)|$ containing exactly $4$ twistor fibers and for any such $S$ and $A\in \Tt(4)$ with $A\subset S$, there exists a curve $L$
of bidegree $(1,0)$ intersecting each connected component of $A$. Furthermore, $h^0(\Ii_A(1,2))=2$ and each $S\in |\Ii_A(1,2)|$ is singular along $L$.
\end{theorem}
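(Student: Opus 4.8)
The plan is to treat the two assertions of the theorem — existence of an irreducible $S$ with exactly four twistor fibers, and the structural statements for an arbitrary such pair $(S,A)$ — by reducing everything to Lemmas~\ref{bo1} and~\ref{bo2}. Given an irreducible $S\in|\Oo_\FF(1,2)|$ containing some $A\in\Tt(4)$, Lemma~\ref{bo1} gives at once $A\in\Tt(4)^-$, so there is a bidegree $(1,0)$ curve $L$ meeting every connected component of $A$; since $A$ is $j$-invariant, $R:=j(L)$ does the same. This proves the collinearity clause. As $\#(L\cap A)=\#(R\cap A)=4$ while $L\cdot S=1$ and $R\cdot S=2$, B\'ezout and Remark~\ref{m0000} force $L\cup R\subset S$, a fact I will reuse below.

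Next I would pin down $h^0(\Ii_A(1,2))$. Lemma~\ref{bo2} gives $\dim|\Ii_A(1,2)|=1$ \emph{provided} $A$ lies on no $(1,1)$-surface, so the crux is to verify this hypothesis whenever $A$ sits inside an irreducible $S$. A reducible $(1,1)$-surface contains at most two twistor fibers (Remark~\ref{m3}), and an irreducible $(1,1)$-surface is necessarily smooth (as recalled in the proof of Lemma~\ref{primo-caso}); so the only danger is a smooth del Pezzo $M\supset A$. In that case $\{M\cup Y:Y\in|\Oo_\FF(0,1)|\}$ is already a $\PP^2$ of \emph{reducible} members of $|\Ii_A(1,2)|$, and I would show these exhaust the system, contradicting the irreducibility of $S$. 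Concretely, from the residual sequence $0\to\Oo_\FF(0,1)\to\Ii_A(1,2)\to\Ii_{A,M}(1,2)\to0$ it suffices to prove $h^0(\Ii_{A,M}(1,2))=0$. The residual class $D:=\Oo_M(1,2)-[A]$ has curve-bidegree $(1,0)$ in $\FF$ but self-intersection $D^2=\Oo_M(1,2)^2-2\,\Oo_M(1,2)\cdot[A]+[A]^2=13-24+0=-11$; since every effective bidegree $(1,0)$ divisor on $M$ is one of its three exceptional $(-1)$-curves (self-intersection $-1$), $D$ cannot be effective, giving $h^0(\Ii_{A,M}(1,2))=0$. Hence $A$ lies on no $(1,1)$-surface, Lemma~\ref{bo2} applies, $h^0(\Ii_A(1,2))=2$, and $|\Ii_A(1,2)|\cong\PP^1$ has exactly four reducible members $M_{C_i}\cup Y_{C_i}$.

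For the singularity statement I would first note that each reducible member is singular precisely along the double curve $M_{C_i}\cap Y_{C_i}$, and that $L$ lies on it: $L\subset M_{C_i}$ because $\#(L\cap(A\setminus C_i))=3>L\cdot M_{C_i}=1$, and $L\subset Y_{C_i}$ because $L\cdot Y_{C_i}=0$ while $L$ meets $C_i$. Thus all four reducible members are singular along $L$. The decisive point is that, since every section in $H^0(\Ii_A(1,2))$ vanishes on $L$, the condition ``singular along $L$'' is the vanishing of the induced linear term in the conormal sheaf $\Ii_L/\Ii_L^2$ at the generic point of $L$, and this is \emph{linear} in the section. The corresponding subspace of the two-dimensional $H^0(\Ii_A(1,2))$ therefore contains the (linearly independent) sections of two distinct reducible members, hence equals all of $H^0(\Ii_A(1,2))$; equivalently, every $S\in|\Ii_A(1,2)|$ is singular along $L$.

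Finally, to exhibit an irreducible $S$ with exactly four twistor fibers I would start from a general $A\in\Tt(4)^-$. A dimension count — the quadruples lying on the one-parameter circle of twistor fibers of some smooth $j$-invariant $(1,1)$-surface form a lower-dimensional locus inside $\Tt(4,L)$ — shows a general such $A$ lies on no $(1,1)$-surface, so by the previous paragraphs $|\Ii_A(1,2)|\cong\PP^1$ has only four reducible members and a general member $S$ is irreducible (and singular along $L$). That $S$ contains no fifth twistor fiber follows because any extra $E\subset S$ would meet $L$ — by Lemma~\ref{bo1} applied to $E$ together with two components of $A$ and the uniqueness of the $(1,0)$ curve through two conics (Remark~\ref{LR}) — producing five collinear twistor fibers on the irreducible $S$, which is ruled out by Theorem~\ref{n6}. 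I expect the main obstacle to be the exclusion of a $(1,1)$-surface through $A$, namely the del Pezzo intersection computation showing that such an $A$ would force $|\Ii_A(1,2)|$ to be entirely reducible, together with the verification that ``singular along $L$'' is genuinely a linear condition on the pencil; once these are in hand, the existence and the exactness of the count follow formally from Lemmas~\ref{bo1}, \ref{bo2} and Theorem~\ref{n6}.
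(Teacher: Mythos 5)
Your proposal is correct and its skeleton coincides with the paper's own proof: collinearity of $A$ from Lemma~\ref{bo1}; $h^0(\Ii_A(1,2))=2$ from Lemma~\ref{bo2}; exactness of the count ``four'' from Theorem~\ref{n6}; existence by taking a general member of the pencil through a suitable $A\in\Tt(4,L)^-$; and the singularity along $L$ deduced from the fact that being singular along $L$ is a linear condition satisfied by the (at least two, in fact four) reducible members $M_{C}\cup Y_{C}$, each of which contains $L$ in $M_C\cap Y_C$ --- this is exactly the paper's argument that $h^0(\Ii_{A\cup 2L}(1,2))=h^0(\Ii_A(1,2))=2$. The genuine difference is your explicit verification of the hypothesis of Lemma~\ref{bo2}, namely that an $A$ lying on an irreducible $S$ lies on no $(1,1)$-surface: your Picard computation on the del Pezzo sextic ($\Oo_M(1,2)^2=13$, $\Oo_M(1,2)\cdot[A]=12$, $[A]^2=0$, so the residual class has square $-11$ yet curve-bidegree $(1,0)$, hence cannot be effective) is correct, and it yields a self-contained proof that a $(1,1)$-surface through $A$ forces every member of $|\Ii_A(1,2)|$ to split as $M\cup Y$. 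The paper obtains the same exclusion implicitly, via the bidegree count for $S\cap M$ (of type $(5,4)$) inside the proof of Lemma~\ref{bo1} combined with the uniqueness of the $(1,1)$-surface through three collinear fibers (Remark~\ref{remmmm}); the paper never spells out this hypothesis check in Theorem~\ref{aaa1} itself, so on this point your proposal is more complete.

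Three minor repairs are needed. (i) ``An irreducible $(1,1)$-surface is necessarily smooth'' is false in general (there are irreducible $(1,1)$-surfaces with isolated singularities); the statement actually used in the proof of Lemma~\ref{primo-caso} is that a $(1,1)$-surface containing at least three twistor fibers is $j$-invariant, and irreducible $j$-invariant $(1,1)$-surfaces are smooth. Since your $M$ contains $A\in\Tt(4)$, this repair is immediate. (ii) Your dimension count in the existence step is asserted rather than carried out; a clean substitute: any three components of a collinear quadruple determine the unique $M\in|\Ii_B(1,1)|$ of Remark~\ref{remmmm}, which is smooth, $j$-invariant, and carries a real one-parameter circle of twistor fibers, so the fourth fiber, free in the real two-dimensional family of twistor fibers meeting $L$, generically avoids $M$. (iii) Your detour through collinearity to exclude a fifth fiber is superfluous: distinct twistor fibers are automatically pairwise disjoint (they are fibers of $\pi$), so Theorem~\ref{n6} rules out a fifth fiber on an irreducible $S$ directly, which is precisely how the paper uses it.
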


\begin{proof}
By Theorem \ref{n6} no irreducible surface of bidegree $(1,2)$ contains at least $5$ twistor fiber. The curve $L$ exists by Lemma \ref{bo1}.
Now we reverse the construction. We start with $A\in \Tt(4,L)^-$. Let $2L$ be the closed subscheme of the ``double line''. To prove that every $S\in |\Ii_A(1,2)|$ is singular at every point of $L$, it is suffices
to prove that $h^0(\Ii_A(1,2)) =h^0(\Ii_{A\cup 2L}(1,2))$. Lemma \ref{bo2} gives  $h^0(\Ii _A(1,2)) = 2$. Hence, it is sufficient to prove that $h^0(\Ii_{A\cup 2L}(1,2))>1$.
For any connected component $C$ of $A$, let $M_C$ be the only surface of bidegree $(1,1)$ containing $A\setminus C$ and let $Y_C$ be the only surface of bidegree $(0,1)$. Since $C\cap L\ne \emptyset$,
$L\cap Y_C\ne \emptyset$. Since $Y_C$ has bidegree $(0,1)$ and $L$ bidegree $(1,0)$, we get $L\subset Y_C$. Thus $L\subseteq M_C\cap Y_C$ and hence $|\Ii_{A\cup 2L}(1,2)|$ contains at least the $4$ reducible elements of $|\Ii_A(1,2)|$. Hence
$h^0(\Ii_{A\cup 2L}(1,2))>1$.
\end{proof}

%
%
%
%
%

\subsection{Non existence results for surfaces of bidegree $(1,2)$ and $(1,3)$} \label{last-subsection}
In this last section, we prove our   last two main results, i.e. Theorems~\ref{n6} and~\ref{n7}.

For any $A\in \Tt(n)^-$, $n\ge 4$, let us call $L$ and $R:=j(L)$ the curves of bidegree $(1,0)$ e $(0,1)$ respectively, intersecting all the connected components of $A$.

In view of our goal, we need to discuss the reducibility of some surfaces containing a certain number of twistor fibers.
First of all, fix an integer $n\ge 2$, take $B\in \Tt(4)$ such that $h^0(\Ii _B(1,1)) >0$ and call $M$ the unique (see e.g. Remark~\ref{n2}) surface of bidegree $(1,1)$ containing $B$. Since every element of $\Cc(1)$ is contained in an element of $|\Oo_{\FF}(0,1)|$ for each $E\in \Tt(n-1)$
there exists a reducible element $W\in |\Oo_{\FF}(1,k)|$, union of $M$ and $n-1$ surfaces of bidegree $(0,1)$ such that $B\cup E\subset W$.
The following lemma is a kind of inverse of this remark. Moreover, it will also be a key tool in the last two proofs.

\begin{lemma}\label{bo5}
If $d\ge2$ and $A\in \Tt(d+3)^-$ are such that $h^0(\Ii _A(1,d)) >0$, then  every element of $|\Ii _A(1,d)|$ has an irreducible component $M$ of bidegree $(1,1)$ containing at least $4$ connected components of $A$.
In particular, for any  $n\ge d+3$, there is no irreducible $S\in |\Oo_{\FF}(1,d)|$ containing $A\in \Tt(n)^-$.
\end{lemma}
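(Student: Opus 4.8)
The plan is to prove the structural statement by induction on $d\ge 2$ and to deduce the final ``in particular'' clause from it. Indeed, if an irreducible $S\in|\Oo_{\FF}(1,d)|$ contained some $A\in\Tt(n)^-$ with $n\ge d+3$, then choosing $d+3$ of its components we would obtain $A'\in\Tt(d+3)^-$ with $h^0(\Ii_{A'}(1,d))>0$, and the structural part would force $S$ to possess an irreducible component of bidegree $(1,1)$, which is impossible for an irreducible surface of bidegree $(1,d)$ with $d\ge 2$. So everything reduces to the first assertion.

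First I would record two facts used throughout. Since $A\in\Tt(d+3)^-$ there is a bidegree $(1,0)$ curve $L$ and the bidegree $(0,1)$ curve $R:=j(L)$ meeting every component of $A$. By the multiplication table~\eqref{tablechow} a bidegree $(1,0)$ curve meets $S$ in one point and a bidegree $(0,1)$ curve in $d$ points (unless contained in $S$); as $L$ and $R$ each meet $A\subset S$ in $d+3$ distinct points and $d+3>d\ge 1$, we get $L\cup R\subset S$ for every $S\in|\Ii_A(1,d)|$. Now fix a component $C$ and let $Y=Y_C$ be the unique element of $|\Ii_C(0,1)|$ (Remark~\ref{postc02}); since $\Oo_{\FF}(0,1)^3=0$ in~\eqref{tablechow}, a $(1,0)$ curve meeting $Y$ lies in $Y$, so $L\subset Y$. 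The key dichotomy is whether $Y\subset S$: if it is, then $S=Y\cup S'$ with $S'\in|\Ii_{B}(1,d-1)|$ and $B:=A\setminus C\in\Tt(d+2)^-$, so the inductive hypothesis applied to $S'$ yields the desired $(1,1)$ component containing four components of $B\subset A$ (for $d=2$ one instead notes directly that $S'$, carrying four disjoint conics, must be an irreducible bidegree $(1,1)$ surface by Remark~\ref{m3}).

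The substance is thus to exclude the case where \emph{no} $Y_{C_i}$ is a component of $S$. In that case no bidegree $(0,1)$ component of $S$ contains a $C_i$ (otherwise it would equal some $Y_{C_i}$ by uniqueness), while bidegree $(0,\ge 2)$ components carry no conic (Remark~\ref{m3.1}); hence all $d+3$ components of $A$ lie on the unique irreducible bidegree $(1,k)$ component $S_0$ of $S$, with $0\le k\le d$. The values $k=0$ (at most one twistor fiber fits, Remark~\ref{m3}) and $2\le k<d$ (apply the ``in particular'' clause already established at level $k$) give immediate contradictions, and $k=1$ is precisely the desired conclusion. This isolates the genuine obstacle, namely that \emph{no irreducible $S\in|\Oo_{\FF}(1,d)|$ contains $A\in\Tt(d+3)^-$}.

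For this last point I would first dispose of configurations lying on a bidegree $(1,1)$ surface: if $A$ were contained in such an $M$ (necessarily irreducible, since it carries three disjoint conics, cf.\ Remark~\ref{m3}), then $M\cap S$ would be a curve of bidegree $(2d+1,d+2)$ by Lemma~\ref{uc1}, which cannot contain the bidegree $(d+3,d+3)$ curve $A$ because $d+3>d+2$; so $A$ lies on no bidegree $(1,1)$ surface. To obtain a contradiction in general I would intersect $S$ with the irreducible bidegree $(1,1)$ surface $M_{B'}$ spanned by a collinear triple $B'\subset A$, which exists and contains $B'\cup L\cup R$ by Remark~\ref{n2}, Remark~\ref{remmmm} and B\'ezout (Remark~\ref{m0000}); then analyse the residual curve $(S\cap M_{B'})-(B'\cup L\cup R)$, of bidegree $(2d-3,d-2)$, by B\'ezout exactly as in Lemma~\ref{bo1}, and run a secondary induction by removing triples. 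I expect this final step---controlling the residual intersection curves on the surfaces $M_{B'}$ for arbitrary $d$ and showing that irreducibility of $S$ is incompatible with carrying $d+3$ collinear twistor fibers---to be the main technical difficulty, with the case $d=2$ furnished as the base by Lemmata~\ref{bo1} and~\ref{bo2}.
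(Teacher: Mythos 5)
Your reduction of the ``in particular'' clause, your dichotomy on whether some $Y_{C_i}$ is a component of $S$, and your treatment of the case $Y_{C_i}\subset S$ (write $S=Y_{C_i}\cup S'$ with $S'\in|\Ii_{B_i}(1,d-1)|$, $B_i\in\Tt(d+2)^-$, and apply strong induction, resp.\ Remark~\ref{m3} when $d=2$) are all sound, and that branch matches the paper's inductive skeleton. The problem is that your Case 2 --- no $Y_{C_i}$ is a component of $S$, so that after discarding $k=0$ and $2\le k<d$ one is left with an irreducible $S\in|\Oo_{\FF}(1,d)|$ containing $A\in\Tt(d+3)^-$ --- is exactly the hard content of the lemma, and you do not prove it: you explicitly defer it as ``the main technical difficulty'' with only a plan (intersect with the $(1,1)$ surfaces $M_{B'}$ through collinear triples, analyse residual curves of bidegree $(2d-3,d-2)$, run a secondary induction). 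Moreover, the base case you invoke does not follow from Lemmas~\ref{bo1} and~\ref{bo2} as claimed: Lemma~\ref{bo1} concerns $A\in\Tt(4)$ and Lemma~\ref{bo2} only computes $\dim|\Ii_{A'}(1,2)|$ and counts its reducible members for $A'\in\Tt(4)^-$ not lying on a $(1,1)$ surface; neither statement excludes an \emph{irreducible} $(1,2)$ surface through five collinear twistor fibers. (Indeed, chasing your plan for $d=2$ one only gets that $|\Ii_A(1,2)|=\{S\}$ and that no $4$-subset of $A$ lies on a $(1,1)$ surface --- no contradiction. Invoking Theorem~\ref{n6} instead would be circular, since its proof uses Lemma~\ref{bo5}.)

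The paper's proof closes precisely this gap, and in a way that makes your case distinction unnecessary: choose $Y$ to be the unique $(0,1)$ surface containing $C\cup L$ (equivalently $Y=Y_C$, which automatically contains $L$ since $L\cdot\Oo_{\FF}(0,1)=0$). Because every component of $B=A\setminus C$ meets $L\subset Y$ and meets $Y$ in a single point, one has $B\cap Y=B\cap L$, namely $d+2$ distinct points on $L$; under $Y\cong F_1$ with $L=h$, the restricted sheaf is $\Ii_{A\cap Y,Y}(1,d)\cong\Ii_{B\cap L,F_1}(df)$, and since $|df|$ cuts out on $L$ a linear series of degree $d<d+2$, its space of sections vanishes: $h^0(Y,\Ii_{A\cap Y,Y}(1,d))=0$. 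The residual sequence~\eqref{successione} then gives an \emph{isomorphism} $H^0(\Ii_B(1,d-1))\to H^0(\Ii_A(1,d))$, i.e.\ every element of $|\Ii_A(1,d)|$ contains $Y$ as a component --- so your Case 2 never occurs, rather than needing to be excluded by a separate argument. This cohomological vanishing (not a Bézout analysis of residual curves on the surfaces $M_{B'}$) is the missing idea, and without it your proposal does not constitute a proof.
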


\begin{proof}
{In order to prove the last statement, it is sufficient to do the case $n=d+3$,} and thus it is sufficient to prove the first assertion.  

{We use induction on $d\ge2$. Let us first assume $d=2$.}
Take $A\in \Tt(5)^-$ and let $L$ and $j(L)$ be the curves of bidegree $(1,0)$ and $(0,1)$ intersecting all the connected components of $A$. 
Fix a connected component $C$ of $A$ and set $B:= A\setminus C$. Since $C\cap L\ne \emptyset$, the curve $C\cup L$ is a connected and nodal curve of bidegree $(2,1)$ with arithmetic genus $0$. 
Hence $h^0(\Oo_{C\cup L}(0,1)) =2$. Thus there is $Y\in |\Ii_{C\cup L}(0,1)|$ and such a $Y$ is unique. Since any two smooth conics of $Y$ meet, no component of $B$ is contained in $Y$.
Therefore, $B\cap Y$ is formed by $4$ points of $L\setminus (L\cap C)$. Recall that $\Oo_Y(1,2)\cong \Oo_{F_1}(h+3f)$ and that $C\in |\Oo_{F_1}(h+f)|$ and
thus $\Ii _{A\cap Y,Y} \cong \Ii_{B\cap L,Y}(2f)$. Since every element of $|f|$ contains a unique point of $L$ we have that $h^0(D,\Ii_{A\cap Y,Y}(1,2))=0$.
The residual exact sequence of $Y$
$$
0\to\Ii_{B}(1,1)\to\Ii_{A}(1,2)\to\Ii_{A\cap Y, Y}(1,2)\to 0,
$$
gives an isomorphism $\phi: H^0(\Ii_B(1,1))\to H^0(\Ii_A(1,2))$. If $h^0(\Ii_B(1,1))=0$, then $h^0(\Ii _A(1,2)) =0$.
Now suppose $h^0(\Ii_B(1,1))\ne 0$. The isomorphism $\phi$ says that every $W\in|\Ii_A(1,2)|$ has $Y$ as an irreducible component, say $W =Y\cup W_1$ with $W_1\in |\Ii_B(1,1)|$, and hence we have the thesis. 

Now assume $d\ge 3$ and use induction on $d$. By reasoning as in the base case,
take $A\in \Tt(d+3)^-$ and use the  exact sequence
$$
0\to\Ii_{B}(1,d-1)\to\Ii_{A}(1,d)\to\Ii_{A\cap Y, Y}(1,d)\to 0,
$$
to prove that $h^0(Y,\Ii_{A\cap Y,Y}(1,d))=0$ and thus that 
there is an isomorphism $\phi: H^0(\Ii_B(1,d-1))\to H^0(\Ii_A(1,d))$. 
Now, again, if $h^0(\Ii_B(1,d-1))=0$, then $h^0(\Ii _A(1,d)) =0$.
So we assume $h^0(\Ii_B(1,d-1))\ne 0$. The isomorphism $\phi$ says that each $S\in|\Ii_A(1,d)|$ has $Y$ as an irreducible component, i.e. $S =D\cup S_1$ with
$S_1\in |\Ii_B(1,d-1)|$. The inductive assumption says that $S_1$ has an irreducible component $M$ of bidegree $(1,1)$ containing at least $4$ components of $B$.
\end{proof}

We now have all the ingredients to prove Theorems~\ref{n6} and~\ref{n7}. 
First, we prove that no irreducible surface of bidegree $(1,2)$ contains 5 twistor fibers.

\begin{proof}[Proof of Theorem~\ref{n6}]
Suppose there exists an irreducible $S\in |\Oo_\FF(1,2)|$ containing $A\in \Tt(5)$. 
Lemma \ref{bo1} shows that for any union $A'\subset A$ of $4$ components of $A$, there exists a union $A''\subset A'$ of $3$ connected components intersecting some $L$ of bidegree $(1,0)$.
Let $L$ be a curve of bidegree $(1,0)$ intersecting the maximal number, $z$, of components of $A$. Obviously $z\ge 3$. By Lemma \ref{bo5}  to get a contradiction, it is sufficient to prove that { $z\ge5$.} 

Assume $z\in \{3,4\}$. Take any order $C_1,\dots ,C_5$ of the connected components of $A$ and set $B_i:= \pi _1(C_i)$, $1\le i\le 5$. 
Each $B_i$ is a line of $\PP^2$. Since any two conics contained in an element of $|\Oo_{\FF}(1,0)|$ meet, $B_1,\dots ,B_5$ are $5$ different lines of $\PP^2$. 
For any $i<j<h$ there exists a curve $T$ of bidegree $(1,0)$ intersecting $C_i$, $C_j$ and $C_h$ if and only if $B_h$ contains the point $B_i\cap B_j$ and in this case $L =\pi _1^{-1}(C_i\cap C_j)$.Without loss of generality, we can assume that $L$ meets $C_1,\dots ,C_z$.

\quad (a) Assume $z=3$ and hence $B_{1}\cap B_{2}\in B_{3}$.
Applying Lemma \ref{bo1} to $C_1\cup C_2\cup C_4\cup C_5$ we have one of the following mutually exclusive relations:
$$
\begin{matrix}
B_1\cap B_2\cap B_4\ne \emptyset, & & B_1\cap B_2\cap B_5\ne \emptyset,\\
B_1\cap B_4\cap B_5\ne \emptyset,& & B_2\cap B_4\cap B_5\neq\emptyset.
\end{matrix}
$$
Since $B_1\cap B_2\in B_3$ and $z=3$, we can exclude the first two cases, i.e. we have $$B_1\cap B_2\cap B_4= B_1\cap B_2\cap B_5= \emptyset.$$ 
Thus, either $B_1\cap B_4\cap B_5\ne \emptyset$ or $B_2\cap B_4\cap B_5\ne \emptyset$. Exchanging if necessary
$C_1$ and $C_2$ we may assume $B_1\cap B_4\cap B_5\ne \emptyset$, i.e. $B_4\cap B_5\in B_1$, and hence $B_2\cap B_4\cap B_5= \emptyset$. Since $B_2\cap B_4\cap B_5=\emptyset$, applying Lemma \ref{bo1} to $C_2\cup C_3\cup C_4\cup C_5$
we have one of the following mutually exclusive  relations 
$$
\begin{matrix}
B_2\cap B_3\cap B_4\ne \emptyset, & & B_2\cap B_3\cap B_5\ne \emptyset,\\
B_3\cap B_4\cap B_5\ne \emptyset. & & 
\end{matrix}
$$
 Since $B_4\cap B_5\in B_1$ and $z=3$, $B_3\cap B_4\cap B_5= \emptyset$. Since $B_2\cap B_3 \in B_1$ and $z=3$, $B_2\cap B_3\cap B_5= \ B_2\cap B_3\cap B_4= \emptyset$, a contradiction.

\quad (b) Assume $z=4$. Since $C_1\cap L\ne \emptyset$, the curve $C_1\cup L$ is a connected and nodal curve of arithmetic genus $0$ and bidegree $(2,1)$. Thus $h^0(\Oo_{C_1\cup L}(0,1))=2$. 
Thus there is $Y\in |\Ii_{C_1\cup L}(0,1)|$. Since $S$ is irreducible, $Y$ is not an irreducible component of $S$. So the residual exact sequence of $Y$
$$
0\to\Ii_{B}(1,1)\to\Ii_{A}(1,2)\to\Ii_{A\cap Y, Y}(1,2)\to 0,
$$ 
gives $h^0(Y,\Ii _{A\cap Y,Y}(1,2)) \ne 0$.
Up to the isomorphism of $Y$ and $F_1$ we have $L =h$, $C_1\in |\Oo_{F_1}(h+f)|$ and $\Oo_Y(1,2)\cong \Oo_{F_1}(h+3f)$. Since $(A\setminus C_1)\cap C_1=\emptyset$, $\Ii _{A\cap Y,Y}(1,3) \cong \Ii_{(A\setminus C_1)\cap D,D}(2f)$.
Since $L$ meets $C_1,\dots ,C_4$, $(A\setminus C_1)\cap D$ contains a set $F\subset L$ such that $\#F=3$. Since every element of $|f|$ contains a unique point of $L$, $h^0(Y,\Ii _{A\setminus C)\cap Y,Y}(2f))=0$, a contradiction.
\end{proof}

We now conclude  with the proof of Theorem~\ref{n7}, which concerns surfaces of bidegree $(1,3)$.

%
\begin{proof}[Proof of Theorem \ref{n7}:]
Assume the existence of $A\in \Tt(6)$ and of an irreducible $S\in |\Oo_{\FF}(1,3)|$ containing $A$. By Lemma~\ref{bo5} to get a contradiction it is sufficient to prove the existence of a curve $L$ of bidegree $(1,0)$ such that all  the components of $A$
intersect $L$. By Lemma \ref{n5} for any union $A'\subset A$ of $5$ components of $A$ there is a union $A''\subset A'$ of $3$ connected components intersecting some $L$ of bidegree $(1,0)$.
Let $L$ be a curve of bidegree $(1,0)$ intersecting the maximal number, $z$, of components of $A$. We have that $z\ge 3$. 
Therefore, by Lemma~\ref{bo5} it is sufficient to prove that {$z\ge6$.} 
Assume then that $z\le 5$. We will now exclude all the cases $z=3,4,5$.

For any connected component $C$ of $A$, Lemma~\ref{n5} tells us that there exists a curve $L$ of bidegree $(1,0)$ which intersects at least $3$ connected components of $A\setminus C$. In particular there is an irreducible $M\in |\Oo_{\FF}(1,1)|$ containing at least $3$ components of $A$ (see Remark~\ref{n2}). Note that $j(M)=M$. We can take $M$ with the additional condition that it contains the maximal number $e$ of components of $A$.
Let $E$ be the union of the components of $A$ contained in $M$. So $3\le e\le z\le 5$.

Since each twistor fiber is $j$-invariant, $j(L)$ hits every connected component of $E$. B\'ezout gives $L\cup j(L)\subset M$ and $L\subset S$. If $e\ge 4$, B\'ezout gives $j(L)\subset S$.
However, the one-dimensional cycle $M\cap S$ has bidegree $(7,5)$ and thus $e\le 4$. 
Set $\Sigma:= S\cap M$ (as a scheme-theoretic intersection). 
Since the one-dimensional scheme $\Sigma $ is the complete intersection of $\FF$ with $2$ very ample divisors, $h^0(\Oo_{\Sigma}) =1$. Set $F:= A\setminus E$.

\quad (a) Assume $e=4$. So $E\cup L\cup j(L)\subset \Sigma$. Since $E\cup L\cup j(L)$ has bidegree $(5,5)$ and $h^0(\Oo_{\Sigma}) =1$, $\Sigma$ is the union of $E\cup L\cup j(L)$ and a multiple structure on $L$.
Note that $\Sigma\in |\Oo_{\FF}(1,3)|$ and that $\Sigma$ contains $E\cup j(L)$ with multiplicity $1$ and  $L$ with multiplicity $3$ (as divisors of the smooth surface $M$). Since $\Sigma$ has multidegree $(7,5)$, $\Sigma =3L\cup j(L)\cup E$. Note that $\Sigma$ contains the degree $4$ zero-dimensional scheme $F\cap M$. Since $F\cap E =\emptyset$, $F\cap (j(L)\cup L)\ne \emptyset$. Thus at least one irreducible component, $T$, of $F$ meets $L\cup j(L)$.
Since $j(T)=T$, $T\cap L\ne \emptyset$. Thus $z=5$. Let $C$ be a component of $E$. Since $C\cap L\ne \emptyset$, $C\cup L$ is a connected and nodal curve of bidegree $(2,1)$ with arithmetic genus $0$.
Thus $h^0(\Oo _{C\cup L}(0,1)) =2$. So there is $Y\in |\Ii_{C\cup L}(0,1)|\ne 0$. Since $S$ is irreducible, $Y$ is not an irreducible component of $S$. Thus the residual exact sequence of $Y$ gives $h^0(Y,\Ii _{A\cap Y,Y}(1,3)) \ne 0$.
Up to the isomorphism of $Y$ and $F_1$ we have $L =h$, $C\in |\Oo_{F_1}(h+f)|$ and $\Oo_Y(1,3)\cong \Oo_{F_1}(h+4f)$. Since $(A\setminus C)\cap C=\emptyset$, $\Ii _{A\cap Y,Y}(1,3) \cong \Ii_{(A\setminus C)\cap Y,Y}(3f)$.
Since $z=5$, $(A\setminus C)\cap Y$ contains a set $H\subset L$ such that $\#H=4$. Since every element of $|f|$ contains a unique point of $L$, $h^0(Y,\Ii _{A\setminus C)\cap Y,Y}(3f))=0$, a contradiction.

\quad (b) Assume $e=3$.  Fix a connected component $C$ of $E$ and set $B:= A\setminus C$. Set $\{Y\}:= |\Ii_C(0,1)|$. As in step (a), we have that $L\subset Y$. The following exact sequence
\begin{equation}\label{eqnn1}
0\to \Ii_B(1,2)\to \Ii_A(1,3)\to \Ii_{C\cup (B\cap Y),Y}(1,3)\to 0
\end{equation}
is the residual exact sequence of $Y$. Since $Y$ is not an irreducible component of $S$, we have $h^0(Y,\Ii _{C\cup (B\cap Y),Y}(1,3)) >0$. As in step (a), we have $\Ii _{C\cup (B\cap Y),Y}(1,3) \cong \Ii_{B\cap Y,F_1}(3f)$.  We now have two possibilities: either $h^0(\Ii _B(1,2))=0$ or $h^0(\Ii _B(1,2))>0$.

\quad (b1) Let us assume for the moment that $h^0(\Ii _B(1,2))=0$. Then $h^0(Y,\Ii _{C\cup (B\cap Y),Y}(1,3)) \ge 2$ and $h^1(Y,\Ii _{C\cup (B\cap Y),Y}(1,3)) \ge 3$. Up to the identification of $Y$ and $F_1$ we have $\Ii_{C,Y}(1,3)\cong \Oo_{F_1}(3f)$.
So the $5$ points $B\cap Y$ give at most one condition to the linear system $|\Oo_{F_1}(3f)|$. Thus there is $J\in |\Oo_{F_1}(f)|$ such that $B\cap Y\subset J$. Note that $J$ is a curve of bidegree $(1,0)$. The maximality of the integer $e$ gives a contradiction.

\quad (b2) Assume that  $h^0(\Ii_B(1,2)) >0$. By Theorem~\ref{n6} every surface containing B is reducible, say $M_1\cup Y$ with $M_{1}$ irreducible of bidegree $(1,1)$ containing at least 4 components of B. Thus $e\ge4$, a contradiction.
%
\end{proof}

%
%

\end{document}